\documentclass[11pt]{amsart}
\usepackage{amsmath, amsfonts, amssymb,amsthm}
\usepackage{euscript}
\usepackage[T1]{fontenc}
\usepackage{graphicx}
\usepackage{color}

\setlength{\hoffset}{-18pt}        
\setlength{\oddsidemargin}{0pt} 
\setlength{\evensidemargin}{9pt} 
\setlength{\marginparwidth}{54pt} 
\setlength{\textwidth}{481pt} 
\setlength{\voffset}{-18pt} 
\setlength{\marginparsep}{7pt} 
\setlength{\topmargin}{0pt} 
\setlength{\headheight}{13pt} 
\setlength{\headsep}{10pt} 
\setlength{\footskip}{27pt} 
\setlength{\textheight}{650pt} 

\newtheorem{thm}{Theorem}[section]
\newtheorem{lem}[thm]{Lemma}
\newtheorem{prop}[thm]{Proposition}
\newtheorem{cor}[thm]{Corollary}

\theoremstyle{definition}
\newtheorem{defn}[thm]{Definition}

\newtheorem{ex}[thm]{Example}
\newtheorem{rem}[thm]{Remark}
\newtheorem{Q}[thm]{Question}

\DeclareMathOperator{\R}{\mathbb R}

\DeclareMathOperator{\N}{\mathbb N}
\DeclareMathOperator{\Z}{\mathcal Z}
\DeclareMathOperator{\I}{\mathcal I}
\DeclareMathOperator{\D}{\mathcal D}
\DeclareMathOperator{\SR}{\mathcal R}
\DeclareMathOperator{\SO}{\mathcal O}
\DeclareMathOperator{\QQ}{\mathbb Q}

\DeclareMathOperator{\dom}{dom}
\DeclareMathOperator{\ord}{ord}
\DeclareMathOperator{\Rad}{Rad}
\DeclareMathOperator{\Sp}{Spec}
\DeclareMathOperator{\pol}{indet}

\def\RadRe {\rm{RadRe}}
\def\Rad {\rm{Rad}}
\def\RadRe {\sqrt[R]}
\def\Rad {\sqrt}

\def \Zd {{\mathcal Z}}

\def \RR {{\mathbb R}}
\def\air{\vskip 1cm}

\begin{document}

\title[\tiny{Continuous functions in the plane regular after one blowing-up}]{Continuous functions in the plane regular after one blowing-up}

\author[G.~Fichou, J.-P.~Monnier, R. Quarez]{Goulwen Fichou,
  Jean-Philippe Monnier, Ronan Quarez}

\thanks{The third author is supported by French National Research Agency (ANR) project GEOLMI - Geometry and Algebra of Linear Matrix Inequalities with Systems Control Applications}

\address{Goulwen Fichou\\
IRMAR (UMR 6625), Universit\'e de
         Rennes 1\\
Campus de Beaulieu, 35042 Rennes Cedex, France}
\email{goulwen.fichou@univ-rennes1.fr}

\address{Jean-Philippe Monnier\\
   LUNAM Universit\'e, LAREMA, Universit\'e d'Angers}
\email{jean-philippe.monnier@univ-angers.fr}

\address{Ronan Quarez\\
IRMAR (UMR 6625), Universit\'e de
         Rennes 1\\
Campus de Beaulieu, 35042 Rennes Cedex, France}
\email{ronan.quarez@univ-rennes1.fr}

\date{\today}

\maketitle

\begin{quote}\small
\textit{MSC 2000:} 14P99, 11E25, 26C15
\par\noindent
\textit{Keywords:} regular function, regulous function, rational
function, real algebraic variety, sum of squares.
\end{quote}

\begin{abstract}
We study rational functions admitting a continuous extension to the
real affine space. First of all, we focus on the regularity of such
functions exhibiting some nice properties of their partial
derivatives. Afterwards, since these functions correspond to rational
functions which become regular after some blowings-up, we work on the
plane where it suffices to blow-up points and then we can count the
number of stages of blowings-up necessary. In the latest parts of the
paper, we investigate the ring of rational continuous functions on the
plane regular after one stage of blowings-up. In particular, we prove
a Positivstellensatz without denominator in this ring.
\end{abstract}

\section{Introduction}
In real algebraic geometry, the choice of a good class of functions is an important matter. From polynomial functions to semialgebraic ones, passing through rational functions, regular functions (rational without poles) or Nash ones (real analytic and semialgebraic), a large class of different functions with a specific flavor is available. In this paper we focus on the class of continuous functions lying between rational and regular functions, with $\frac{x^3}{x^2+y^2}$ as a typical example among the classical functions appearing in calculus courses.
The rational functions admitting a continuous extension at their poles
have known a recent interest in the work of Kucharz on the good way to
approximate continuous maps between spheres \cite{Ku}. The surprising
behaviour of rational continuous functions on singular sets has been
exhibited by Koll\'ar (\cite{Ko} or \cite{KN}), whereas their
systematic study on smooth varieties have been performed in
\cite{FHMM}. More recently Kucharz and Kurdyka discussed real
algebraic vector bundle using this approach  \cite{KuKu}. Notably, we
know from \cite{Ko} that the restriction of a rational continuous
function remains rational, provided that the ambient space is
smooth. We know also that such functions on smooth varieties are
exactly those continuous functions that become regular after
blowings-up \cite{FHMM}. Note that rational continuous functions appear also naturally in the algebraic identities which answer the 17th Hilbert Problem, namely Kreisel noticed that any non-negative polynomial is a sum of squares of rational continuous functions \cite{Kr}.

\vskip 5mm

In the present paper, we tackle two natural questions related to these rational and continuous functions on affine spaces, which we call regulous following the terminology introduced in \cite{FHMM}. The first one concerns the regularity of regulous functions. Note first that regulous functions admit first order partial derivatives in any direction as a consequence of the restriction property (Proposition \ref{derpartregulu}).  This automatic existence enables to give a nice description of a regulous function of class $C^k$, or $k$-regulous function, just in terms of the continuity of the $k$-partial derivatives of the associated rational function (Theorem \ref{thm-top}). 
Note however that the mixed partial derivatives of order two do not exist in general, or that the first partial derivatives are not necessarily locally bounded around the poles.

Another approach to discuss the behaviour of regulous functions is to focus on the number of stages of blowings-up necessary to make it regular. We restrict our attention to the planar setting where it suffices to perform blowings-up along points. We count the number of stages by taking into account the maximal chain of infinitely near points. For instance, the function $\frac{x^3}{x^2+y^2}$ becomes regular after one stage of blowing-up whereas the function
$\frac{x^3}{x^2+y^4}$ needs two stages of blowings-up to become regular.
The regulous functions which become regular after one stage of blowings-up (or equivalently after the blowing-up of a finite number of points in the plane) are particularly nice: these functions, called $[1]$-regulous, admit a simple local characterisation in terms of positive definiteness of their denominator (Theorem \ref{equivfctregapres1eclat}). This characterisation enables to prove the local boundedness of the first order particular derivatives (Proposition \ref{locbound}) and to guaranty their $C^k$ regularity just by looking at their polynomial expansion of order $k$ (Theorem \ref{question3})!

\vskip 5mm

We know already from \cite{FHMM} that there is no difference between the topologies induced by $k$-regulous functions for different values of $k$ in $\mathbb N$. If it is still the case when a number of stages of blowings-up is fixed, we prove however that the situation changes when, insides $k$-regulous functions, we vary the number of stages (cf. section \ref{topo}). 

\vskip 5mm

The good properties of the rings $\SR_{[l]}^{k}(\R^2)$ of $k$-regulous functions regular after $l$ stages of blowings-up in the plane lead us to study their real algebraic properties in comparison with those of the rings $\SR^{k}(\R^2)$ of $k$-regulous functions. The so-called \L ojasiewicz property in \cite[Lem. 5.1]{FHMM} was the key tool to prove the weak, real and ``strong'' Nullstellens\"atze in the rings $\SR^{k}(\R^n)$. Unfortunately, this property is no more true in the rings $\SR^{k}_{[1]}(\R^2)$.
Thus, we are lead to give a new proof of the Nullstellensatz that does not use the \L ojasiewicz property and which extends to
the rings $\SR^{k}_{[l]}(\R^2)$ to get at least a {\it real} Nullstellensatz (since the Nullstellensatz appears not to be valid). Moreover, we prove the radical principality of the rings $\SR^{k}_{[1]}(\R^2)$ which can be seen as a weak \L ojasiewicz property.

We complete the study of real algebraic properties by mentioning that any boolean combination of sets given by the positivity locus $\{f>0\}$ of a given function $f$ in $\SR^{k}_{[l]}(\R^2)$ is a semialgebraic subset of $\R^2$. Combined with Tarski-Seidenberger Theorem, this helps us to study the real spectrum of the rings $\SR^{k}_{[l]}(\R^2)$ and leads us to an useful Artin-Lang property.

\vskip 5mm

Finally, the techniques developed all along the paper enable to bring
a new enlightenment on Hilbert 17th problem. We recall that the
Hilbert 17-th problem (answered by the affirmative by Artin in 1927)
asked whether a non-negative polynomial is a sum of squares of rational functions. This kind of algebraic certificates of non-negativity is also called a Positivstellensatz.
Versions of Hilbert 17-th problem have been studied in several
geometric other settings: in the ring of analytic functions (\cite{Ru}
and  \cite{Ri}), of Nash functions (\cite[8.5.6]{BCR}), etc. 
In general, the considered non-negative function is a sum of squares only
in the associated fraction field: for example the Motzkin polynomial
is non-negative 
but not a sum of squares of polynomials. If it is sometimes possible
to get Positivstellens\"atze {\it without denominator} 
(namely when the sum of squares lies already in the ring), these cases are quite rare.\par
This motivates us to look at the 17-th Hilbert problem in the rings of
regulous functions. We produce a Positivstellensatz without
denominator in $\SR^0(\R^n)$ (Proposition \ref{sosreg0}) 
using a classical argument, namely the formal Positivstellensatz
\cite[Prop. 4.4.1]{BCR} and the Artin-Lang property of section
\ref{RadicPrincip}. 
We also get a Positivstellensatz without denominator in the ring of
regular functions on $\R^2$ (Proposition \ref{reguluesosdim2}), 
using the properties of so-called ``bad points'' introduced by Delzell \cite{De}.\par 
The heart of section \ref{Hilbert17} is to show that there exists also
a Positivstellensatz without denominator in $\SR^0_{[1]}(\R^2)$
(Theorem \ref{SOS})). 
Namely, we show that any such rational function which is non-negative everywhere on $\R^2$, can be written as a sum of squares in $\SR^0_{[1]}(\R^2)$.
Note that this time a formal argument does not work any longer. We first consider the case where our non-negative function $f$ vanishes at its set of poles (a condition which can be seen as a flatness condition); in that case one gets easily the result. Then, in the general case, we give a constructive proof (assuming we know how to write a polynomial as a sum of squares of rational functions). The number of steps of the proof is given by the cardinality of the set of poles where $f$ does not vanish, a set which somehow measure the defect of flatness for $f$. 

Note that our method does not extend to the rings $\SR^k_{[l]}(\R^2)$ for any integers $(k,l)\not=(0,1)$. 

\section{Partial derivatives of regulous functions} 

Let $n$ be an integer and let $X\subset \R^n$ be an irreducible,
smooth, affine variety. Let $f\in \R(X)^*$ be a rational function on
$X$. The domain of $f$, denoted by $\dom (f)$, is the biggest Zariski open subset of $X$ on
which $f$ is regular, namely $f=\dfrac{p}{q}$ where $p$ and $q$ are polynomial
functions on $\R^n$ such that $q$ does not vanish on $\dom (f)$. The indeterminacy locus of $f$ is defined to be the Zariski closed set $\pol(f)=X\setminus \dom(f)$. 

\subsection{Regulous functions}

We recall the definition of a $k$-regulous function given in \cite{FHMM}.
\begin{defn}
\label{def1}
Let $n$ be a positive integer and let $X\subset \R^n$ be an irreducible,
smooth, affine variety.
Let $k\in\N\cup\{\infty\}$. W say that a function $f:X\to \R$ is $k$-regulous on $X$ if  $f$ is $C^k$ on $X$ (with its induced structure of a
$C^{\infty}$-variety) and $f$ is a rational function on $X$, i.e. there exists a
non-empty Zariski open subset $U\subseteq X$ such that $f|_U$ is
regular.\par
A $0$-regulous function on $X$ is simply called a regulous function.
\end{defn}

Let $k\in \N\cup\{\infty\}$.
We denote by
$\SR^k(X)$ the ring of $k$-regulous functions on $X$. By Theorem 3.3
of \cite{FHMM} we know that $\SR^{\infty}(X)$ coincides with the ring
$\SO (X)$ of regular functions on $X$.

Denote by $\Z(f)$ the zero set of the real function $f$.

\begin{lem}
\label{lemme1}
Let $f\in \SR^0 (\R^n)$ and suppose $f=\frac{p}{q}$ on $\dom(f)$ with
$p$ and $q$ some real polynomials in $n$ variables. Then
$\Z(q)\subset \Z(p)$ and $\Z(f)\subset \Z(p).$
\end{lem}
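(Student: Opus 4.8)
The plan is to combine two elementary facts. First, since $\R^n$ is an irreducible affine variety, $\dom(f)$ is a nonempty Zariski-open subset of $\R^n$, hence dense in $\R^n$ for the Euclidean topology: its complement $\pol(f)$ is a proper Zariski-closed set, so it is contained in some real hypersurface $\Z(h)$ with $h\not\equiv 0$, and such a hypersurface has empty Euclidean interior. Second, $f$ is regulous, hence continuous on all of $\R^n$. I may also assume $p\not\equiv 0$, the case $p\equiv 0$ being trivial since then $\Z(p)=\R^n$, and I recall that by the very definition of $\dom(f)$ the polynomial $q$ does not vanish anywhere on $\dom(f)$.

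For the inclusion $\Z(q)\subset\Z(p)$, I argue by contradiction. Suppose $q(x_0)=0$ but $p(x_0)\neq 0$. By density of $\dom(f)$, choose a sequence $(x_k)$ in $\dom(f)$ with $x_k\to x_0$. Then $q(x_k)\neq 0$ for every $k$, so $f(x_k)=p(x_k)/q(x_k)$; by continuity of the polynomials $p$ and $q$ we have $p(x_k)\to p(x_0)\neq 0$ and $q(x_k)\to q(x_0)=0$, whence $|f(x_k)|\to +\infty$. This contradicts the continuity of $f$ at $x_0$, which forces $f(x_k)\to f(x_0)\in\R$. Therefore $p(x_0)=0$.

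For the inclusion $\Z(f)\subset\Z(p)$, let $x_0\in\Z(f)$ and again pick a sequence $(x_k)$ in $\dom(f)$ converging to $x_0$. If $q(x_0)=0$, then $x_0\in\Z(q)\subset\Z(p)$ by the first part, and we are done. Otherwise $q(x_0)\neq 0$; writing $p(x_k)=q(x_k)\,f(x_k)$ and letting $k\to\infty$, continuity of $q$ and of $f$ gives $p(x_k)\to q(x_0)\cdot f(x_0)=q(x_0)\cdot 0=0$, so $p(x_0)=0$.

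I do not expect a genuine obstacle here: the only points that deserve a word of justification are the density of $\dom(f)$ for the Euclidean topology, which guarantees the existence of the approximating sequences, and the small case distinction according to whether $q$ vanishes at the limit point; everything else is just passing to the limit in the identity $f=p/q$ along $\dom(f)$.
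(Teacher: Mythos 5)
Your proof is correct and rests on the same idea as the paper's one-line argument: the identity $qf=p$ holds on the Euclidean-dense set $\dom(f)$ and hence, by continuity, on all of $\R^n$, from which both inclusions are immediate. Your sequence arguments (and the contradiction for the first inclusion) are just an unpacking of this; note that writing $p(x_k)=q(x_k)f(x_k)$ and passing to the limit handles both inclusions at once, without the case distinction on whether $q(x_0)$ vanishes.
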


\begin{proof}
It follows from the identity $qf=p$ in $\SR^0(\R^n)$.
\end{proof}

Note that a rational function $f\in  \R(X)^*$ on $X$ is
regulous if and only if $f$ can be extended continously to $\pol(f)$,
because $X$ being smooth, the euclidean closure of a non-empty Zariski
open subset $U\subseteq X$ is equal to $X$.\par

We will propose latter a similar characterisation of a $k$-regulous function. 

\begin{defn}
\label{definitionordre}
Let $x\in X$. Let $\mathbf m_x$ be the maximal ideal of $\SO (X)$
corresponding to $x$.
\begin{enumerate}
\item Let $p\in\SO(X)$. We denote by $\ord_x(p)$ the order of $p$ at
  $x$, this is the biggest integer $l$ such that $p\in {\mathbf m}_x^l$.
\item Let $f=\dfrac{p}{q}\in \R(X)^*$ a rational function on $X$ such
  that $p\in\SO(X)$ and $q\in\SO(X)$. We define the order of $f$ at
  $x$, denoted by $\ord_x(f)$, as the integer $\ord_x(p)-\ord_x(q)$ 
\end{enumerate}
\end{defn}

We state below some results that will be useful in the sequel.

\begin{prop}
\label{dimension1}
Let $n$ be an integer and let $X\subset \R^n$ be an irreducible,
smooth, affine variety of dimension $1$. Then, for $k\in\N$, the ring $\SR^k (X)$ of $k$-regulous functions on $X$ coincides with the ring $\SR^{\infty} (X)$ of regular functions on $X$.
\end{prop}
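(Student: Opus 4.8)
The plan is to show that a $k$-regulous function $f$ on a smooth affine curve $X$ is actually regular, for any $k \in \N$; since $\SR^\infty(X) = \SO(X)$ by \cite{FHMM} and $\SR^k(X) \supseteq \SR^{k+1}(X) \supseteq \cdots \supseteq \SO(X)$, it suffices to treat $k=0$ and even $k = 1$ would be enough. The key point is that the indeterminacy locus $\pol(f)$ of a rational function on a smooth curve is a finite set of points, so the obstruction to regularity is purely local at finitely many points. Working in the local ring $\SO(X)_{\mathbf m_x} = \SO_{X,x}$, which is a discrete valuation ring because $X$ is a smooth curve, I would write $f = p/q$ with $p, q \in \SO(X)$ and use the valuation $\ord_x$ from Definition \ref{definitionordre}. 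The claim reduces to: if $f$ extends continuously across $x$, then $\ord_x(f) \ge 0$, i.e.\ $f$ is already regular at $x$ in the local ring.

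The main step is the local analysis at a point $x \in \pol(f)$. After possibly an \'etale or analytic change of coordinates, a neighbourhood of $x$ in $X$ looks like an interval in $\R$ with a local coordinate $t$ vanishing at $x$, and in this coordinate $f$ has a Laurent-type expansion $f = a t^m + (\text{higher order})$ with $a \neq 0$ and $m = \ord_x(f) \in \Z$. If $m < 0$ then $|f| \to \infty$ as $t \to 0$ along the curve, contradicting that $f \in \SR^0(X) \subseteq C^0(X)$ has a finite value at $x$. Hence $m \ge 0$, which means $q$ divides $p$ locally at $x$ (in the DVR $\SO_{X,x}$), so $f \in \SO_{X,x}$. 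Since this holds at every indeterminacy point and $f$ is already regular away from the finite set $\pol(f)$, we conclude $f \in \SO(X)$, i.e.\ $f \in \SR^\infty(X)$.

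The reverse inclusion $\SR^\infty(X) \subseteq \SR^k(X)$ is immediate since a regular function is $C^\infty$, hence $C^k$, and rational. Combining the two inclusions gives $\SR^k(X) = \SR^\infty(X)$, which is the assertion of Proposition \ref{dimension1}; the finiteness of $\pol(f)$ on a curve also gives, as a byproduct, that on curves continuity alone forces regularity, so the hierarchy of $k$-regulous rings collapses.

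**Main obstacle.**

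The only delicate point is justifying the one-variable Laurent expansion intrinsically on $X$ rather than on $\R$: one must pass to the completion $\widehat{\SO_{X,x}} \cong \R[[t]]$ (using smoothness of the curve) and check that the continuous extension across $x$ forces the valuation $\ord_x(f)$ to be nonnegative. This is essentially the observation that a rational function on a smooth curve with a genuine pole is unbounded near that pole — routine once the DVR structure is in place, but it is the conceptual heart of why dimension $1$ is special: in higher dimension the indeterminacy locus can be positive-dimensional and a rational function may be bounded near it without being regular (as with $\frac{x^3}{x^2+y^2}$), so no such argument is available.
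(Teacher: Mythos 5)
Your proposal is correct and follows essentially the same route as the paper: reduce to $k=0$, localize at the finitely many indeterminacy points, use the discrete valuation ring structure of $\SO_{X,x}$ on the smooth curve, and deduce $\ord_x(f)\geq 0$ because a negative order would force $|f|$ to blow up near $x$, contradicting continuity. The paper implements the local step slightly more economically — writing $f=p/q$, factoring out powers of a uniformizer $h$, and taking a limit along a sequence in $\dom(f)\cap\D(h)$ (using euclidean density of Zariski open sets in the smooth $X$) rather than invoking the completion $\R[[t]]$ — but the content is the same.
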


\begin{proof} 
Note that it is sufficient to prove the inclusion
$$\SR^0 (X)\subset \SR^{\infty} (X).$$
Take $f\in\SR^0 (X)$. 
There exist $p,q\in \SO (X)$ such that $f=\dfrac{p}{q}$ on $\dom (f)$, with $q(x)\not=0$ for any $x\in \dom(f)$.

 Set $l=\ord_x(q)$ and $m=\ord_x(p)$. Choose $h\in \SO(X)$ to be a uniformizing parameter of
the discrete valuation ring $\SO_{x,X}$, so that 
$\dfrac{p}{h^m}(x)\not=0$ and $\dfrac{q}{h^l}(x)\not=0$. Since
$\dom(f)\cap \D(h)$
is dense in $X$ for the euclidean topology, the point $x$ is the limit of a sequence of  points
$x_i\in \dom(f)\cap \D(h)$. We get 
$$\dfrac{q}{h^l}(x)f(x)=\lim \dfrac{q}{h^l}(x_i)f(x_i)=\lim
\dfrac{p}{h^l}(x_i)$$ $$=\lim
\dfrac{p}{h^m}(x_i)h^{m-l}(x_i)=\dfrac{p}{h^m}(x)\lim h^{m-l}(x_i).$$ 
We have proved that $m\geq l$, meaning that $f$ is regular at $x$.
\end{proof}

In analogy with the arc-analytic functions defined by K. Kurdyka \cite{KK}, we say that a function $f:\R^n\to \R$ is \emph{arc-algebraic} if the restriction of $f$ to any smooth, connected, euclidean open
subset $U$ of an irreducible curve $X\subset\R^n$ coincides on $U$
with the restriction of a rational function on $X$ regular on $U$.

\begin{cor}
\label{restrictionouvertcourbe}
Let $f\in\SR^0 (\R^n)$. Then $f$ is arc-algebraic.
\end{cor}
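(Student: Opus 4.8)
The plan is to reduce to the one-dimensional situation already settled in Proposition \ref{dimension1}. Fix $f\in\SR^0(\R^n)$, an irreducible curve $X\subset\R^n$, and a smooth connected euclidean open subset $U\subseteq X$; since $U$ is smooth, $U\subseteq\Reg(X)$, so it suffices to exhibit a rational function on $X$ that is regular on $\Reg(X)$ and agrees there with $f$.

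The first step is to control the restriction $f|_X$. As the ambient space $\R^n$ is smooth, the restriction property of \cite{Ko} ensures that $f|_X$ is again a continuous rational function on $X$; in particular it represents an element of $\R(X)$ regular on some dense Zariski open subset of $X$. (When $X\not\subseteq\pol(f)$ this is elementary: use a local expression $p/q$ of $f$ near a point of $\dom(f)\cap X$ together with Lemma \ref{lemme1}; the restriction property is only genuinely needed when $X\subseteq\pol(f)$, which can occur once $n\geq 3$.)

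The second step is to pass to the normalization $\pi\colon X'\to X$: then $X'$ is an irreducible, smooth, affine curve, $\R(X')=\R(X)$, and $\pi$ restricts to an isomorphism over $\Reg(X)$. Put $g:=f\circ\pi\colon X'\to\R$. This $g$ is continuous, being the composite of the morphism $\pi$ with the continuous function $f$, and rational on $X'$, being the pullback along $\pi$ of the rational function $f|_X$; hence $g\in\SR^0(X')$. By Proposition \ref{dimension1}, $g\in\SR^{\infty}(X')=\SO(X')$, so $g$ is regular on all of $X'$.

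It remains to transport the conclusion back. Seen inside $\R(X')=\R(X)$, the regular function $g$ is a rational function on $X$; through the isomorphism $\pi^{-1}(\Reg(X))\cong\Reg(X)$ it is regular on $\Reg(X)$, and $g(y)=f(\pi(\pi^{-1}(y)))=f(y)$ for every $y\in\Reg(X)$. Restricting to $U$ then shows that $f|_U$ coincides with the restriction of a rational function on $X$ regular on $U$, which is exactly what it means for $f$ to be arc-algebraic. The only non-formal ingredient is the restriction property of \cite{Ko} used in the first step; granting it, the statement follows from Proposition \ref{dimension1} essentially formally, the sole point requiring care being the normalization, introduced to accommodate a possibly singular $X$.
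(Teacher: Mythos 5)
Your proof is correct and follows the same route as the paper: both rest on the restriction property of \cite{Ko} (or \cite{FHMM}) to make $f|_X$ a continuous rational function on $X$, and then on the one-dimensional valuation argument of Proposition \ref{dimension1}. The only difference is in packaging: where the paper re-runs the proof of Proposition \ref{dimension1} at the smooth points of the possibly singular curve $X$ (the local ring there being still a discrete valuation ring), you pass to the normalization so as to invoke the proposition verbatim on a smooth curve and then descend through the isomorphism over $\Reg(X)$ --- a clean and equally valid way to handle the singularities.
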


\begin{proof} Let $X\subset\R^n$ be an irreducible curve and $U\subset X$ be a smooth, connected, euclidean open subset of $X$.
By \cite{Ko} or \cite{FHMM}, the restriction of $f$ to $X$ coincide
with a rational function $g$ on $X$ which can be extended continuously
to the whole of $X$. Even if $X$ may be singular, we can follow the proof of Proposition \ref{dimension1} to show that $g$ is regular on the smooth open subset $U$ of $X$.
\end{proof}

\begin{rem} The smoothness of the set $U$ is crucial. For example, the plane curve ${\mathcal C}$ given by the equation $y^3+2x^2y-x^4=0$ is analytically smooth but admits the origin as a singular point (cf. \cite{BCR} p.69). In that case, the rational function $\frac{x^2}{y}$ is of class $C^{\infty}$ on ${\mathcal C}$, since it satisfies $\frac{x^2}{y}=1+\sqrt{1+y}$ on a neighbourhood of the origin. In particular $\frac{x^2}{y}$ is not arc-algebraic at the origin.
\end{rem}

\subsection{Partial derivatives}
Let $f$ be a rational function on $\R^n$, $a\in\R^n$ and $v$ be a vector of $\R^n$. 
We denote by $\partial_v f$ the partial derivative of
$f$ in the direction $v$. Namely 
$$\partial_v f(a)=\lim_{t\to 0}\frac{f(a+tv)-f(a)}{t}.$$
In particular $\partial_{x_i} f$ denotes the partial derivative of $f$ in the direction of the $x_i$-axis.
Likewise, for any positive integer $k$, we denote the $k$-th partial derivative of $f$ with respect successively to 
$x_{i_k}\cdots x_{i_1}$ by 
$\partial^k_{x_{i_1}\cdots x_{i_k}}f$. 
\par
Then, any $k$-th partial derivative  $\partial^k_{x_{i_1}\cdots x_{i_k}}f$ is a rational function on $\R^n$ which is
$C^{\infty}$ on $\dom (f)$. Note moreover that
 $$\dom (f)\subseteq \dom \left(\partial^k_{x_{i_1}\cdots x_{i_k}}f\right).$$

Let $f\in\SR^0 (\R^n)$. We can see $f$ as a continuous function on
$\R^n$ which is rational but also as a regular function on $\dom (f)$
which can be extended continuously to the whole of $\R^n$. The partial
derivative functions $\partial_{x_i}f$ of 
$f$ are regular functions on $\dom (f)$ but, a priori, they are not
defined on $\pol (f)$. In the following proposition, we prove that the
partial derivatives  of a regulous function are automatically defined on
the whole of $\R^n$.

\begin{prop}
\label{derpartregulu}
Let $f\in \SR^0 (\R^n)$. For any $a\in\R^n$, the function $f$ admits a partial
derivative in any direction at $a$.
\end{prop}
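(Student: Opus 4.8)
The plan is to reduce the statement to the one-dimensional case by restricting $f$ to the line through $a$ in the direction $v$. If $v=0$ then $\partial_v f(a)=0$ trivially, so assume $v\neq 0$ and set $L=\{a+tv : t\in\R\}\subset\R^n$. Then $L$ is an irreducible, smooth, affine curve, and the parametrisation $\varphi\colon\R\to L$, $t\mapsto a+tv$, is an isomorphism of affine varieties.

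The key step is to observe that $g\colon\R\to\R$, $g(t)=f(a+tv)$, is a $0$-regulous function on $\R$. Indeed, since the ambient space $\R^n$ is smooth, the restriction property of \cite{Ko} (the very one underlying Corollary \ref{restrictionouvertcourbe}) guarantees that $f|_L$ is a rational function on $L$; pulling back through the isomorphism $\varphi$ shows that $g$ is rational on $\R$. Moreover $g$ is continuous, being the composition of the continuous function $f$ with the continuous map $t\mapsto a+tv$. Hence $g\in\SR^0(\R)$.

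It remains to apply Proposition \ref{dimension1} with $X=\R$: the function $g$ is regular on $\R$, that is, $g=\frac{p}{q}$ for some $p,q\in\R[t]$ with $q$ having no real zero. Such a function is $C^\infty$ on $\R$, hence in particular differentiable at $t=0$, so that
$$\partial_v f(a)=\lim_{t\to 0}\frac{f(a+tv)-f(a)}{t}=\lim_{t\to 0}\frac{g(t)-g(0)}{t}=g'(0)$$
exists. This proves the proposition. The only non-formal ingredient is the use of the restriction property in the second step, which relies essentially on the smoothness of $\R^n$; once $g$ is known to be rational and continuous, the conclusion follows immediately from Proposition \ref{dimension1} and the elementary smoothness of pole-free univariate rational functions.
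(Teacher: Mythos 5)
Your proof is correct and follows essentially the same route as the paper: the paper's proof invokes Corollary \ref{restrictionouvertcourbe} (arc-algebraicity), which is itself established exactly by the two ingredients you use, namely the restriction property of \cite{Ko} and the one-dimensional regularity result of Proposition \ref{dimension1}. You simply unpack that corollary in the special case of the line $L$, which being globally smooth lets you apply Proposition \ref{dimension1} directly.
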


\begin{proof}
Assume $a\in\pol(f)$, otherwise the result is immediate. Let $v\in \R^n$. By
Corollary \ref{restrictionouvertcourbe}, the one variable function $g:t\mapsto f(a+tv)$ 
is $C^{\infty}$, so the partial derivative ${\partial}_{v}f$ of $f$ at $a$ 
exists and is equal to $g'(0)$.
\end{proof}

\begin{rem}
\begin{enumerate}
\item The previous proposition does not mean that $f$ is $C^1$ as is shown by the following example.\par 
 Let $f$ be the two-variables regulous function $f(x,y)=\dfrac{x^3}{x^2+y^2}$. Then $f(t,0)=t$ therefore ${\partial}_{ x}f(0,0)=1$. Note that ${\partial}_{x}f(x,y)=\dfrac{x^4+3x^2y^2}{(x^2+y^2)^2}$ therefore $f$ is not $C^1$.
\item 
We see that for any regulous function, all partial derivatives of the form $\partial^k_{v^k}f$ exist for any $k\in \N$. Note however that for the two-variables regulous function $f(x,y)=\dfrac{x^3}{x^2+y^2}$, the second partial derivative $ {\partial}_{\!y x}^2f(0,0)$ does not exist.
\end{enumerate}
\end{rem}

We state some corollaries of the existence of the partial derivatives of a regulous function. First, an immediate consequence for $k$-regulous functions.

\begin{cor} Let $k$ be an integer and let $f\in \SR^k (\R^n)$. The
  partial derivatives of order $k+1$ of $f$ are defined on
  $\R^n$.
\end{cor}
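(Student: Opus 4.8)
The plan is to deduce this corollary directly from Proposition \ref{derpartregulu} by an induction on the order of differentiation. The key observation is that if $f\in\SR^k(\R^n)$, then any first-order partial derivative $\partial_{x_i}f$ is, on the one hand, a rational function on $\R^n$ (being a partial derivative of a rational function), and on the other hand of class $C^{k-1}$ on all of $\R^n$ (since $f$ is $C^k$). Hence $\partial_{x_i}f$ is itself a $(k-1)$-regulous function, i.e. $\partial_{x_i}f\in\SR^{k-1}(\R^n)$. Iterating this $k$ times, every partial derivative $\partial^k_{x_{i_1}\cdots x_{i_k}}f$ of order $k$ is a regulous function, that is, an element of $\SR^0(\R^n)$.

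Once we know $\partial^k_{x_{i_1}\cdots x_{i_k}}f\in\SR^0(\R^n)$, we simply apply Proposition \ref{derpartregulu}: a regulous function admits a partial derivative in every direction at every point of $\R^n$. Applying this to each of the $n$ coordinate directions shows that every partial derivative of order $k+1$, namely $\partial_{x_{i_{k+1}}}\bigl(\partial^k_{x_{i_1}\cdots x_{i_k}}f\bigr)$, is defined at every point $a\in\R^n$. This gives the statement.

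I would also note the small point that the base case $k=0$ is precisely Proposition \ref{derpartregulu} itself, so the corollary is a genuine (if modest) generalization, and the induction step is exactly the two-line argument above, using that $\SR^j(\R^n)$ is closed under the formation of first partial derivatives once one drops one degree of smoothness.

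The only mild obstacle is making the closure statement ``$\partial_{x_i}f\in\SR^{k-1}(\R^n)$'' fully rigorous: one must check that $\partial_{x_i}f$ really is rational on all of $\R^n$ in the sense of Definition \ref{def1}, i.e. that it is regular on some non-empty Zariski open set. This is immediate because $\partial_{x_i}f$ is regular on $\dom(f)$, which is a non-empty Zariski open subset of $\R^n$ (indeed $\dom(f)\subseteq\dom(\partial_{x_i}f)$ as recalled in the excerpt). Everything else is a formal unwinding of the definitions, so there is no real difficulty here.
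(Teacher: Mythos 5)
Your proof is correct and is exactly the argument the paper has in mind (the paper states this corollary without proof as an ``immediate consequence'' of Proposition \ref{derpartregulu}): since $f$ is $C^k$ and rational, its $k$-th order partial derivatives are rational and continuous, hence regulous, and Proposition \ref{derpartregulu} then yields the existence of one more derivative in every direction. The closure property you invoke, that $\partial_{x_i}f\in\SR^{k-1}(\R^n)$ whenever $f\in\SR^k(\R^n)$, is precisely the easy direction recorded in Corollary \ref{equivalence1}, so nothing further is needed.
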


Second, this property enables to reinterpret the definition of a $k$-regulous function.

\begin{cor}
\label{equivalence1}
Let $k$ be an integer and let $f\in \SR^0 (\R^n)$.
Then $f\in \SR^{k+1}(\R^n)$ if and only if
the partial derivatives ${\partial}_{x_i}f$ of
$f$ are
$k$-regulous functions on $\R^n$ for any $i\in\{1,\ldots,n\}$ or equivalently if the partial derivatives ${\partial}_{x_i}f$, for $i\in\{1,\ldots,n\}$,
are $C^k$ on $\R^n$.
\end{cor}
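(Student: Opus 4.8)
The statement has two equivalences to establish: $f \in \SR^{k+1}(\R^n)$ iff each $\partial_{x_i}f \in \SR^k(\R^n)$, and $f \in \SR^{k+1}(\R^n)$ iff each $\partial_{x_i}f$ is $C^k$ on $\R^n$. The plan is to first note that $f$ is already assumed regulous, and that by Proposition \ref{derpartregulu} each partial derivative $\partial_{x_i}f$ is a well-defined function on all of $\R^n$; moreover it is a rational function (the usual quotient-rule expression on $\dom(f)$) and it is regular on $\dom(f)$, hence if it happens to be continuous on $\R^n$ it is automatically regulous. This observation immediately collapses the second equivalence into the first: saying $\partial_{x_i}f$ is $C^k$ on $\R^n$ is the same as saying the rational function $\partial_{x_i}f$ is $k$-regulous, precisely because a $C^k$ rational function defined on all of $\R^n$ and regular on a dense Zariski open set is by Definition \ref{def1} a $k$-regulous function.

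For the main equivalence, the forward direction is routine: if $f \in \SR^{k+1}(\R^n)$, then $f$ is $C^{k+1}$ on $\R^n$, so by the classical chain of implications each $\partial_{x_i}f$ is $C^k$ on $\R^n$; since $\partial_{x_i}f$ is also rational (regular on the dense open set $\dom(f)$), it is $k$-regulous, i.e. lies in $\SR^k(\R^n)$. For the reverse direction, suppose each $\partial_{x_i}f \in \SR^k(\R^n)$. Then in particular each $\partial_{x_i}f$ is $C^k$, hence $C^0$, on $\R^n$. A function whose first-order partial derivatives all exist and are continuous on an open set is $C^1$ there (a standard multivariable calculus fact); applying this, $f$ is $C^1$ on $\R^n$. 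Iterating: the first partials of $f$ are $C^k$, so $f$ is $C^{k+1}$ — more carefully, one argues by induction on $k$, using at each stage that continuity of all first partials upgrades the regularity of $f$ by one degree. Since $f$ is already rational, $f$ is then $(k+1)$-regulous.

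The one point requiring a little care — and the closest thing to an obstacle — is the passage from "all first partials are $C^k$" to "$f$ is $C^{k+1}$," since this is the only place where genuine analysis (rather than bookkeeping with the definitions) enters. The clean way is induction on $k \geq 0$: the base case $k=0$ is exactly the classical theorem that continuity of all $\partial_{x_i}f$ on $\R^n$ implies $f \in C^1(\R^n)$; for the inductive step, if the first partials $\partial_{x_i}f$ are $C^{k}$, apply the inductive hypothesis to each $\partial_{x_i}f$ in place of $f$ to get $\partial_{x_i}f \in C^{k}$ implies... actually the cleaner formulation is: $f$ is $C^{m+1}$ iff all $\partial_{x_i}f$ are $C^m$, which is itself the standard characterization of $C^{m+1}$ functions and can simply be quoted. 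Everything else — that the partials are rational, that they are defined everywhere, that regular-on-a-dense-open plus continuous-everywhere equals regulous — follows from Proposition \ref{derpartregulu}, Definition \ref{def1}, and the remarks immediately preceding this corollary.
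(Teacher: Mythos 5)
Your proposal is correct and follows essentially the same route as the paper: the forward direction is the observation that the partials of a $C^{k+1}$ rational function are rational and $C^k$, hence $k$-regulous, and the converse rests on the standard fact that a function all of whose first partials are $C^k$ is $C^{k+1}$, combined with the rationality of $f$. The paper states this more tersely, but the content, including the reduction of the "$C^k$" formulation to the "$k$-regulous" one via rationality of the partial derivatives, is the same.
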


\begin{proof} Assume first $f\in \SR^{k+1}(\R^n)$. The
partial derivative functions ${\partial}_{ x_i}f$ are thus
rational functions of class $C^k$ on $\R^n$, and hence $k$-regulous.\\
Assume the partial derivative functions ${\partial}_{ x_i}f$ of $f$ are
$k$-regulous functions on $\R^n$, the function $f$ is then of class
$C^{k+1}$ on $\R^n$. Since $f\in \SR^0 (\R^n)$ then the function $f$
is rational and hence $f\in \SR^{k+1} (\R^n)$.
\end{proof}

Finally, the automatic existence of the first partial derivatives of a regulous function enables to give a characterisation of $1$-regulous functions in terms of the behaviour of their first partial derivatives with respect to continuity. 

\begin{cor}\label{equivalence2} Let $f\in \SR^0 (\R^n)$. Then $f\in \SR^1(\R^n)$ if and only if the partial derivatives ${\partial}_{x_1}f, \ldots,{\partial}_{x_n}f$, considered as rational functions on $\dom(f)$, can be extended continuously to $\R^n$.
\end{cor}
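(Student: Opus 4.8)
The plan is to deduce this from Corollary \ref{equivalence1} together with the automatic existence of first partial derivatives (Proposition \ref{derpartregulu}). The non-trivial direction is to show that if the rational functions $\partial_{x_i}f$ on $\dom(f)$ each extend continuously to $\R^n$, then $f\in\SR^1(\R^n)$; the converse is immediate since $f\in\SR^1(\R^n)$ means exactly that $f$ is rational and $C^1$, so each $\partial_{x_i}f$ is continuous on $\R^n$, hence a continuous extension of the rational function $\partial_{x_i}f|_{\dom(f)}$.

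For the forward direction, suppose each $\partial_{x_i}f$, as a rational function, admits a continuous extension $g_i$ to $\R^n$. By Proposition \ref{derpartregulu}, the partial derivative $\partial_{x_i}f(a)$ already exists at every $a\in\R^n$ as a genuine directional limit, and on the dense open set $\dom(f)$ it agrees with the rational function $\partial_{x_i}f$, hence with $g_i$. The first step is to check that in fact $\partial_{x_i}f(a)=g_i(a)$ for \emph{every} $a\in\R^n$, including $a\in\pol(f)$: one uses that $g_i$ is continuous and that, by Corollary \ref{restrictionouvertcourbe} applied along the line $t\mapsto a+te_i$, the one-variable function $t\mapsto f(a+te_i)$ is $C^\infty$ with derivative $\partial_{x_i}f(a+te_i)=g_i(a+te_i)$ for $t$ in a punctured neighbourhood of $0$; letting $t\to 0$ and invoking continuity of $g_i$ and of $t\mapsto f(a+te_i)$ identifies the derivative at $t=0$ with $g_i(a)$. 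Thus each pointwise partial derivative function $x\mapsto\partial_{x_i}f(x)$ equals the continuous function $g_i$ on all of $\R^n$.

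It then remains to upgrade "$f$ has continuous first partial derivatives at every point" to "$f$ is of class $C^1$ on $\R^n$''. This is the standard real-analysis fact that a function all of whose first-order partial derivatives exist and are continuous on an open set is $C^1$ (in particular differentiable) there; I would simply cite it. Once $f$ is $C^1$ on $\R^n$, and since $f\in\SR^0(\R^n)$ is rational, we get $f\in\SR^1(\R^n)$ by definition — alternatively, observe that the $g_i=\partial_{x_i}f$ are rational and continuous on $\R^n$, hence $0$-regulous, and apply Corollary \ref{equivalence1} with $k=0$ to conclude $f\in\SR^1(\R^n)$.

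The main obstacle, such as it is, is the bookkeeping in the first step: making sure the pointwise derivative $\partial_{x_i}f$ defined by Proposition \ref{derpartregulu} genuinely coincides with the continuous extension $g_i$ at indeterminacy points, rather than merely being related to it off $\pol(f)$. Everything else is either the converse (trivial) or an appeal to the classical $C^1$-criterion and to Corollary \ref{equivalence1}.
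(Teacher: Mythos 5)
Your argument coincides with the paper's in its main thrust (trivial converse; identify the pointwise derivative with the continuous extension at each pole by restricting to the coordinate line and using arc-algebraicity; then invoke the classical criterion that continuous partials imply $C^1$), but it has a genuine gap at the step ``$\partial_{x_i}f(a+te_i)=g_i(a+te_i)$ for $t$ in a punctured neighbourhood of $0$''. That identity is only available when the points $a+te_i$ lie in $\dom(f)$, where the rational function $\partial_{x_i}f$ is literally the derivative of the regular function $f$. Since $\pol(f)$ is merely of codimension at least two, for $n\geq 3$ it can contain entire lines (e.g.\ $\frac{x^3}{x^2+y^2}$ on $\R^3$ has the $z$-axis as indeterminacy locus), and the line through $a$ in the direction $e_i$ may be locally contained in $\pol(f)$. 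In that case your claimed identity at the nearby points $a+te_i$ is precisely an instance of the statement you are trying to prove at $a$, so the argument becomes circular; nothing in Corollary \ref{restrictionouvertcourbe} tells you that the derivative $g'(t)$ of the one-variable restriction agrees with the continuous extension $g_i$ at points of $\pol(f)$.

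The paper closes exactly this case by performing a linear change of coordinates $(y_1,\ldots,y_n)$ at $a$ so that no $y_j$-axis is locally contained in $\pol(f)$ (possible because $\pol(f)$ has codimension at least two), running your argument in each $y_j$-direction — where the coordinate line now meets $\pol(f)$ in a finite set near $a$, so the punctured-neighbourhood identification is legitimate — and then recovering the continuity of $\partial_{x_i}f$ from that of the $\partial_{y_j}f$. Note that for $n\leq 2$ your proof is complete as written, since $\pol(f)$ is then finite; the missing case only arises in higher dimension, but the corollary is stated for all $n$.
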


In particular, we don't need to pay attention to the values on $\pol(f)$ of the partial derivative functions of a regulous function $f$.

\begin{proof}[Proof of Corollary \ref{equivalence2}] Let $p$ and $q$ be polynomial functions on $\R^n$ such that the rational function $\frac{p}{q}$ is well-defined and coincides with $f$ on $\dom(f)$.

If $f\in C^1(\R^n)$, then for $i\in \{1,\ldots,n\}$, the partial derivative ${\partial}_{x_i}f$ is continuous on $\R^n$. As a consequence, the rational function ${\partial}_{ x_i}\left({\dfrac{p}{q}}\right)$, well-defined and equal to ${\partial}_{ x_i}f$ on $\dom(f)$, admits a continuous extension to $\R^n$. 

Conversely, we need to prove that, for $i\in \{1,\ldots,n\}$, the
continuous extension of the partial derivative ${\partial}_{ x_i}f$ at
any $a\in \pol(f)$ coincides with the value of ${\partial}_{ x_i}f$ at
$a$. Denote by $l$ the limit of ${\partial}_{ x_i}f(b)$ as $b$ tends
to $a$ in $\dom(f)$. If the line passing through $a$ with direction
$x_i$ is not included in $\pol(f)$ in a neighbourhood of $a$, then $l$ is in particular the limit at 0 of the derivative of the one variable function $g:t\mapsto f(a+te_i)$, where $(e_1,\ldots,e_n)$ stands for the canonical basis of $\R^n$, which is $C^{\infty}$ by arc-algebraicity. So $l={\partial}_{ x_i}f(a)$, and therefore ${\partial}_{ x_i}f$ is continuous at $a$.

In case the line passing through $a$ with direction $x_i$ is included
in $\pol(f)$ in a neighbourhood of $a$, we choose another coordinates
system $(y_1,\ldots,y_n)$ on $\R^n$ at $a$ so that none of the
$y_j$-axis are locally included in $\pol(f)$ (which is a codimension at least two subset of $\R^n$, cf. \cite{FHMM}). We obtain like this the continuity of the partial derivatives ${\partial}_{y_1}f, \ldots,{\partial}_{ y_n}f$ at $a$, and therefore the continuity of ${\partial}_{x_i}f$.
\end{proof}

We generalize the previous result to $k$-regulous functions.

\begin{prop}
\label{equivalence3}
Let $k$ be a positive integer and let $f\in \SR^0 (\R^n)$.
Then $f\in \SR^{k}(\R^n)$ if and only if for any $j\in \{1,\ldots,k\}$
the $j$-th partial derivatives ${\partial^j}_{\!
  x_{i_1}\cdots x_{i_j}}f$, seen on $\dom (f)$, can be extended continuously to $\pol(f)$.
\end{prop}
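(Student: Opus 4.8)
The plan is to prove the statement by induction on $k$, using Corollary \ref{equivalence2} as the base case $k=1$ (the case $k=1$ being precisely that corollary), and using Corollary \ref{equivalence1} to reduce the passage from $k$ to $k+1$ to the analogous statement for the first partial derivatives ${\partial}_{x_i}f$. Concretely, one first observes that the forward implication is immediate: if $f\in\SR^k(\R^n)$ then each $j$-th partial derivative for $j\le k$ is a rational function of class $C^{k-j}\ge C^0$ on $\R^n$, hence continuous, and its restriction to $\dom(f)$ is of course the rational function ${\partial^j}_{\!x_{i_1}\cdots x_{i_j}}f$; so the continuous extension to $\pol(f)$ exists.

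For the converse, I would argue as follows. Suppose that for every $j\in\{1,\ldots,k\}$ and every choice of indices, the rational function ${\partial^j}_{\!x_{i_1}\cdots x_{i_j}}f$ extends continuously from $\dom(f)$ to all of $\R^n$. Apply the hypothesis to $j$ ranging over $\{1,\ldots,k\}$ with $i_1$ fixed equal to some $i$: this says exactly that each $g_i:={\partial}_{x_i}f$, viewed as a rational function on $\dom(f)\subseteq\dom(g_i)$, has all of its partial derivatives of order $\le k-1$ extending continuously to $\pol(f)$. Note $g_i\in\SR^0(\R^n)$ by Proposition \ref{derpartregulu} (and the fact that $g_i$ agrees with a rational function on the dense open set $\dom(f)$, hence is rational). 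By the inductive hypothesis applied to $g_i$ at level $k-1$, we conclude $g_i\in\SR^{k-1}(\R^n)$ for each $i$. Then Corollary \ref{equivalence1} gives $f\in\SR^{k}(\R^n)$, which is what we want.

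The one point that needs care — and which I expect to be the genuine obstacle — is the base of this reduction and the compatibility of the ``extends continuously on $\dom(f)$'' hypothesis as we strip off one derivative. The subtlety is the same one handled in the proof of Corollary \ref{equivalence2}: the continuous extension of a partial derivative ${\partial}_{x_i}(\cdot)$ to a point $a\in\pol$ a priori only records the limit of the rational function along $\dom(f)$, and one must know this equals the actual directional derivative of the (already continuous) function at $a$; when the $x_i$-line through $a$ lies inside the indeterminacy locus one must change coordinates to lines that are not locally contained in $\pol(f)$, which is possible since $\pol(f)$ has codimension $\ge 2$. One should check that this coordinate-change trick propagates through the iterated derivatives — i.e. that if all iterated partials of $f$ of order $\le k$ in the $x$-coordinates extend continuously, then the same holds in any linear coordinate system, so that Corollary \ref{equivalence2} and the inductive step can be legitimately applied to $g_i$. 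This is routine but is the place where the argument could slip; everything else is a formal unwinding of the two corollaries.

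I would therefore organize the write-up as: (1) dispatch the easy implication in one sentence; (2) state the induction, with $k=1$ being Corollary \ref{equivalence2}; (3) carry out the inductive step via Corollary \ref{equivalence1}, after verifying that the hypothesis on iterated partials of $f$ of order $\le k$ is equivalent to: each ${\partial}_{x_i}f$ lies in $\SR^0(\R^n)$ and has iterated partials of order $\le k-1$ extending continuously to $\pol$; (4) include, if needed, a short lemma or remark that the ``continuous extension of all iterated partials of order $\le m$'' condition is independent of the choice of linear coordinates, which is the only non-formal ingredient and mirrors the argument already used for Corollary \ref{equivalence2}.
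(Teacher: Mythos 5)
Your proposal is correct and is essentially the paper's argument: the same induction on $k$ anchored at Corollary \ref{equivalence2}, the only (immaterial) difference being the direction in which a derivative is peeled off — you apply the inductive hypothesis to the first partials $\partial_{x_i}f$ (having secured $\partial_{x_i}f\in\SR^0(\R^n)$ from the $j=1$ case via Corollary \ref{equivalence2}, not from Proposition \ref{derpartregulu} alone) and conclude with Corollary \ref{equivalence1}, whereas the paper first deduces $f\in\SR^{k}(\R^n)$ from the inductive hypothesis and then upgrades the $k$-th partials to $1$-regulous by Corollary \ref{equivalence2}. The coordinate-change worry you flag in point (4) is already absorbed into the proof of Corollary \ref{equivalence2}, which you use as a black box, so no extra lemma is needed.
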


\begin{proof}
The direct implication is clear. We prove the converse implication by induction on $k$. The case $k=1$ has been proved in Corollary \ref{equivalence2}. Assume now the induction property for some $k\geq 1$. Assume that for any $j\in \{1,\ldots,k+1\}$ the $j$-th partial derivatives of $f$ can be extended continuously to $\pol(f)$. 
Then the function $f$ is $k$-regulous by induction hypothesis, so that the $k$-th partial derivatives of $f$, which are rational functions on $\dom(f)$, are
regulous on $\R^n$.
Moreover, their partial derivatives can be extended
continuously to $\pol(f)$, therefore the $k$-th partial derivative functions are $1$-regulous functions on $\R^n$ by Corollary \ref{equivalence2} again. As a consequence
 $f$ is $k+1$-regulous on $\R^n$.
\end{proof}

Another formulation of Proposition  \ref{equivalence3}.
\begin{prop}
\label{equivalence3bis}
Let $k$ be an integer and let $f\in\R(x_1,\ldots,x_n)$.
Then $f\in \SR^{k}(\R^n)$ if and only if for any $j\in \{0,\ldots,k\}$
the $j$-th partial derivatives ${\partial^j}_{\!
  x_{i_1}\cdots x_{i_j}}f$, seen on $\dom (f)$ ($\partial^0 f=f$), can be extended continuously to $\pol(f)$.
\end{prop}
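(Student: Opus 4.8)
The plan is to reduce immediately to Proposition \ref{equivalence3}, the only genuinely new ingredient being the case $j=0$, which replaces the standing hypothesis $f\in\SR^0(\R^n)$ by a condition imposed directly on $f$.

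For the direct implication, if $f\in\SR^k(\R^n)$ then in particular $f\in\SR^0(\R^n)$, so the case $j=0$ holds (that is, $f$ itself extends continuously to $\pol(f)$), while for $j\in\{1,\ldots,k\}$ the $j$-th partial derivatives of $f$, seen on $\dom(f)$, extend continuously to $\pol(f)$ by Proposition \ref{equivalence3}.

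For the converse, I would first exploit the case $j=0$ of the hypothesis: $f$ is a rational function on $\R^n$ that admits a continuous extension to $\pol(f)$, and since $\R^n$ is smooth the euclidean closure of the Zariski open set $\dom(f)$ is all of $\R^n$ (as recalled just after Lemma \ref{lemme1}), hence $f\in\SR^0(\R^n)$. The remaining part of the hypothesis, for $j\in\{1,\ldots,k\}$, is then exactly the assumption of Proposition \ref{equivalence3}, which gives $f\in\SR^k(\R^n)$.

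I do not expect any real obstacle: the statement is a cosmetic repackaging of Proposition \ref{equivalence3} meant to accept an arbitrary element of $\R(x_1,\ldots,x_n)$ rather than one already known to be regulous. The only point requiring a word of care is the degenerate case $k=0$, in which the statement reduces precisely to the characterisation ``$f\in\SR^0(\R^n)$ if and only if $f$ extends continuously to $\pol(f)$'' noted in the text, so that no appeal to Proposition \ref{equivalence3} is needed there.
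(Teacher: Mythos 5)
Your proof is correct and follows exactly the route the paper intends: the paper offers no separate argument, presenting the statement as an immediate reformulation of Proposition \ref{equivalence3}, and your reduction (using the $j=0$ case together with the density of $\dom(f)$ in $\R^n$ to get $f\in\SR^0(\R^n)$, then invoking Proposition \ref{equivalence3}) is precisely the intended filling-in of that reformulation. Your explicit treatment of the degenerate case $k=0$, which Proposition \ref{equivalence3} does not cover since it assumes $k$ positive, is a welcome extra precision.
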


We give now an application of Proposition \ref{equivalence3}.

\begin{cor}
\label{arithmetique}
Let $k\in\N$ and let $f\in \SR^k (\R^n)$. Choose 
$p,q\in \R [x_1,\ldots,x_n]$ such that $f=\dfrac{p}{q}$ on $\dom(f)$. Then 
$$\forall l\in\N,\,\forall t\in\N^*,\,\,p^lf^t\in\SR^{k+l}(\R^n).$$
\end{cor}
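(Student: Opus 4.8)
The plan is to use the characterisation of $k$-regulous functions through the continuity of partial derivatives provided by Proposition \ref{equivalence3} (or its variant Proposition \ref{equivalence3bis}), together with the Leibniz rule for higher order partial derivatives. First I would fix $l\in\N$ and $t\in\N^*$ and set $g=p^lf^t$. Since $f\in\SR^k(\R^n)\subseteq\SR^0(\R^n)$ is a rational function and $p$ is a polynomial, $g$ is a well-defined rational function on $\dom(f)\subseteq\dom(g)$; what must be checked is that for each $j\in\{0,\ldots,k+l\}$ every $j$-th partial derivative of $g$, viewed as a rational function on $\dom(f)$, extends continuously to $\pol(f)$. By Proposition \ref{equivalence3bis} this will give $g\in\SR^{k+l}(\R^n)$.

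The key computation is to expand $\partial^j_{x_{i_1}\cdots x_{i_j}}(p^lf^t)$ by iterated Leibniz. Each resulting term is (a constant times) a product of the form $\bigl(\partial^{a_1}p\bigr)\cdots\bigl(\partial^{a_s}p\bigr)\cdot\bigl(\partial^{b_1}f\bigr)\cdots\bigl(\partial^{b_r}f\bigr)$ where $a_1+\cdots+a_s+b_1+\cdots+b_r=j$, the $a$'s count differentiations hitting one of the $l$ copies of $p$ (so $s\le l$, or rather at most $\min(l,j)$ copies of $p$ actually get differentiated, the remaining $l-s$ appearing undifferentiated), and the $b$'s count differentiations hitting one of the $t$ copies of $f$. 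Crucially, since $j\le k+l$ and at least $l-s$ of the factors are undifferentiated copies of $p$ (each contributing $0$ to the total differentiation count), the number of derivatives falling on the $f$-factors is $b_1+\cdots+b_r\le j-(\text{derivatives on }p)$; more to the point, I claim each individual $b_m$ satisfies $b_m\le k$. The clean way to see this: among the $l$ copies of $p$, in any monomial of the Leibniz expansion at most $j$ of them are differentiated at all, but the essential estimate is that if some $b_m\ge k+1$ then, counting that together with the forced contributions, one would need strictly more than $k+l$ total derivatives when $t\ge 2$... — this is the delicate point and I would instead argue as follows. Write $f^t=f\cdot f^{t-1}$ and induct on $t$: it suffices to treat $t=1$, i.e. to show $p^lf\in\SR^{k+l}(\R^n)$ for $f\in\SR^k$, and then bootstrap using $p^lf^t=(p^lf)\cdot f^{t-1}$ together with the fact (to be established for $t=1$, or directly from the $t=1$ case applied with a larger exponent) that products behave well; alternatively one notes $p^lf^t=p^l f\cdot f^{t-1}$ and that $f^{t-1}\in\SR^k(\R^n)$ since $\SR^k(\R^n)$ is a ring, reducing to showing that if $h\in\SR^{k+l}$ comes from multiplying by $p^l$ and $f\in\SR^k$ then $h\cdot f\in\SR^{k+l}$; but the cleanest is simply the direct Leibniz count below.

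Here is the direct argument for $j\le k+l$. In the Leibniz expansion of $\partial^j(p^lf^t)$, a typical term differentiates $p$ a total of $\alpha$ times (distributed among the $l$ copies, but $l-(\#\text{copies hit})\ge 0$ copies stay undifferentiated) and differentiates the $t$ copies of $f$ a total of $j-\alpha$ times, with the derivatives on the $f$-copies split as $b_1+\cdots+b_t=j-\alpha$, $b_m\ge 0$. Since at most $j$ of the copies of $p$ can be hit and $l-j$ (if $l>j$) stay put — actually the point is: the factor $\partial^{b_m}f$ with the largest $b_m$ satisfies $b_m\le j-\alpha$. I need $b_m\le k$, i.e. each $\partial^{b_m}f$ is still a continuous (regulous of class $C^{b_m}$) function extendable to $\pol(f)$. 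If $b_m\ge k+1$, since $f\in\SR^k$ only guarantees continuity of derivatives up to order $k$, this is a problem; the resolution is that such a term carries at least $l$ "undifferentiated-$p$" factors only when $\alpha=0$, and when $\alpha=0$ we have $b_1+\cdots+b_t=j\le k+l$ with $t\ge1$, so a single $b_m$ could indeed reach $k+l>k$. Thus the naive bound fails and one genuinely uses the structure of $p^l$: the factor $p^l$ vanishes to high order on $\Z(q)$ by Lemma \ref{lemme1} ($\Z(q)\subset\Z(p)$), and the pole set of $\partial^{b_m}f$ is contained in $\Z(q)$. The main obstacle, then, is to control $\partial^{b_m}f$ for $b_m$ between $k+1$ and $k+l$: one shows $p^l\cdot\partial^{b_m}f$ extends continuously to $\pol(f)$ because, writing $f=p/q$ on $\dom f$, the rational function $\partial^{b_m}(p/q)$ has denominator dividing a power $q^{b_m+1}$, and multiplying by the numerator $p^l$ — which by Lemma \ref{lemme1} lies in the ideal of $\Z(q)$ with high multiplicity — kills the pole up to order $l$; combined with $f\in\SR^k$ absorbing the first $k$ orders of differentiation, the product $p^lf^t$ acquires $C^{k+l}$ regularity. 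I expect this multiplicity bookkeeping — making precise that "each unit of $p$ multiplied in buys one extra order of differentiability" — to be the heart of the proof, and I would carry it out by the clean inductive reduction: prove $pf\in\SR^{k+1}(\R^n)$ first (the case $l=1,t=1$), then iterate in $l$ using $p^lf=p\cdot(p^{l-1}f)$ with the inductive hypothesis $p^{l-1}f\in\SR^{k+l-1}$, and finally handle general $t\ge 1$ by writing $p^lf^t=(p^lf)\cdot f^{t-1}$ and invoking that $\SR^{k+l}(\R^n)$ is a ring containing $f^{t-1}\in\SR^k(\R^n)\subseteq\SR^{k+l}(\R^n)$ — wait, that last multiplication is not automatically in $\SR^{k+l}$ since $\SR^{k+l}\cdot\SR^k\subseteq\SR^k$ only; so the genuine content is in the $t$-direction too, handled by the same "buy differentiability with factors of $p$" mechanism applied to the identity $q^{t-1}f^t=p^{t-1}f$, giving $p^{l}f^{t}=p^{l+1-t}\cdot(p^{t-1}f^t)=p^{l+1-t}\cdot(p^{t-1}\cdot p^{t-1}f)$ hmm; more straightforwardly, from $qf=p$ one gets $q^tf^t=p^t$, hence $p^lf^t=p^l f^t$ and $f^t=p^t/q^t$, so $p^lf^t=p^{l+t}/q^t$ has denominator $q^t$ and numerator $p^{l+t}$ vanishing on $\Z(q)$ to order at least $l+t$ (Lemma \ref{lemme1} iterated); one then differentiates $p^{l+t}/q^t$ up to order $k+l$, checks every term's denominator is a power of $q$ dominated by the vanishing multiplicity of the accompanying numerator together with the $C^k$-regularity of $f$, and concludes continuity on $\pol(f)$ via Proposition \ref{equivalence3bis}.
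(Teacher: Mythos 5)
Your overall strategy --- reduce to Proposition \ref{equivalence3bis} and trade factors of $p$ for orders of differentiability --- is the right one, and you correctly diagnose why your first two routes fail (a single factor $\partial^{b_m}f$ in the Leibniz expansion can have order $b_m>k$, and the product of an element of $\SR^{k+l}(\R^n)$ with an element of $\SR^{k}(\R^n)$ is a priori only in $\SR^{k}(\R^n)$). But the route you finally settle on is not carried out, and as stated it does not close. You propose to write $p^lf^t=p^{l+t}/q^t$, differentiate up to order $k+l$, and ``check that every term's denominator is a power of $q$ dominated by the vanishing multiplicity of the accompanying numerator together with the $C^k$-regularity of $f$''. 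This is precisely the hard step, and no mechanism is given for it. Lemma \ref{lemme1} only yields $\Z(q)\subset\Z(p)$, with no comparison of multiplicities, and a comparison of orders of vanishing at a point of $\Z(q)$ does not imply continuity of a rational function there: $y^3/(x^2+y^4)$ has numerator of order $3$ and denominator of order $2$ at the origin, yet is unbounded along the arc $x=t^2$, $y=t$. So the ``multiplicity bookkeeping'' you defer cannot be settled by order counting, and you never specify how the $C^k$-regularity of $f$ is supposed to enter the estimate.

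The missing idea is a single algebraic identity that does the bookkeeping for you. On $\dom(f)$ one computes $\partial_v(p^{l+1}f^t)=\partial_v\bigl(p^{l+t+1}/q^t\bigr)=(l+t+1)(\partial_v p)\,p^lf^t-t(\partial_v q)\,p^lf^{t+1}$, i.e.\ each first partial derivative of $p^{l+1}f^t$ is a polynomial combination of $p^lf^t$ and $p^lf^{t+1}$. This sets up an induction on $l$: the base case $l=0$ is just the ring property of $\SR^{k}(\R^n)$, and in the induction step every partial derivative of $p^{l+1}f^t$ of order $1,\ldots,k+l+1$ is a partial derivative of order at most $k+l$ of a $(k+l)$-regulous function, hence extends continuously to $\pol(f)$; Proposition \ref{equivalence3} then gives $p^{l+1}f^t\in\SR^{k+l+1}(\R^n)$. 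This is exactly the paper's proof, and it is the precise form of your slogan that ``each unit of $p$ multiplied in buys one extra order of differentiability''; without it (or an equivalent regrouping of the Leibniz terms into polynomial multiples of the functions $p^af^b$) your plan does not constitute a proof.
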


\begin{proof}
We prove the result by induction on $l$. For $l=0$ the result is immediate. Assume the induction property for some $l\in \N$. For $t\in\N^*$, consider the
partial derivative ${\partial}_{ v} (p^{l+1}f^t)$ of $p^{l+1}f^t$ in the direction $v\in \R^n$ on $\dom (f)$. We have 
$$ {\partial}_{ v} (p^{l+1}f^t)={\partial}_{ v}\left(\dfrac{p^{l+t+1}}{q^t}\right)=
(l+t+1)\left({\partial}_{ v} p\right)\dfrac{p^{l+t}}{q^t}-p^{l+t+1}\dfrac{t\left({\partial}_{ v}
  q\right) q^{t-1}}{q^{2t}}$$
$$=(l+t+1)\left({\partial}_{ v} p\right) p^l f^t-t \left({\partial}_{ v}
  q\right) p^l f^{t+1}.$$
By the induction hypothesies, $p^l f^t$ and $p^l f^{t+1}$ are
$k+l$-regulous on $\R^n$. Consequently, for $j\in \{1,\ldots,k+l+1\}$, any 
$j$-th partial derivative function of $p^{l+1}f^t$ on $\dom (f)$
can be extended continuously to $\pol(f)$. By Proposition
\ref{equivalence3}, the function $p^{l+1}f^t$ is ${(k+l+1)}$-regulous. 
\end{proof}

Consider the continuous function
  $f:\R\rightarrow\R$, $x\mapsto |x|$. Then $f''$ can be extended
  continuously at $0$ but of course $f$ is not $C^2$ on $\R$. We prove that for regulous functions, this case can not appear, namely it is enough to consider only the partial derivative of maximal order. We note moreover that the continuity of $f$ is not even necessary. 

\begin{thm}\label{thm-top}
Let $k$ be a positive integer and let $f\in \R(x_1,\ldots,x_n)$ be a rational function with $\pol(f)$ of codimension at least two in $\R^n$.
Then $f\in \SR^{k}(\R^n)$ if and only if 
all the $k$-th partial derivatives ${\partial}_{\!
  x_{i_1}\cdots x_{i_k}}^kf$, seen on $\dom (f)$, can be extended
continuously to $\pol(f)$ for any $i_1,\ldots,i_k\in\{1,\ldots,n\}$.
\end{thm}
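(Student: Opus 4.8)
The plan is to reduce everything to Proposition \ref{equivalence3bis}. The direct implication needs nothing new: if $f\in\SR^k(\R^n)$, then all partial derivatives of $f$ of order $\le k$ are $C^0$ on $\R^n$, and in particular the $k$-th ones extend continuously to $\pol(f)$. For the converse I want to show that the continuity at $\pol(f)$ of the top-order partials forces the same for all partials of order $0,1,\dots,k$, and then invoke Proposition \ref{equivalence3bis}.

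The argument rests on the following auxiliary fact, which I would isolate and prove first: if $g\in\R(x_1,\dots,x_n)$ has $\pol(g)$ of codimension at least two and each of the rational functions $\partial_{x_1}g,\dots,\partial_{x_n}g$, viewed on $\dom(g)$, extends continuously to $\pol(g)$, then $g$ itself extends continuously to $\pol(g)$, hence $g\in\SR^0(\R^n)$. Granting this, the converse follows by a finite downward recursion on the order. Suppose all $m$-th partials of $f$ extend continuously to $\pol(f)$ (for $m=k$ this is the hypothesis), and let $h=\partial^{m-1}_{x_{i_1}\cdots x_{i_{m-1}}}f$ be an arbitrary $(m-1)$-th partial. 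Then $h$ is rational with $\pol(h)\subseteq\pol(f)$, so $\pol(h)$ is of codimension at least two, and its first-order partials $\partial_{x_j}h=\partial^{m}_{x_j x_{i_1}\cdots x_{i_{m-1}}}f$ are $m$-th partials of $f$, hence extend continuously to $\pol(f)\supseteq\pol(h)$. The auxiliary fact then gives that $h$ extends continuously to $\pol(h)$, i.e.\ $h$ is regulous; restricting this continuous function to $\dom(f)$, every $(m-1)$-th partial of $f$ extends continuously to $\pol(f)$. Iterating from $m=k$ down to $m=1$, we conclude that for every $j\in\{0,\dots,k\}$ all $j$-th partial derivatives of $f$ extend continuously to $\pol(f)$; Proposition \ref{equivalence3bis} then yields $f\in\SR^k(\R^n)$.

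It remains to sketch the auxiliary fact, which is the only non-formal ingredient. Fix $a\in\pol(g)$. The continuous extensions of $\partial_{x_1}g,\dots,\partial_{x_n}g$ are locally bounded, so there are $r>0$ and $M>0$ with $\|\nabla g\|\le M$ on $B(a,r)\cap\dom(g)$. The geometric heart is that the complement of a closed semialgebraic set of codimension at least two is locally quasiconvex near any of its points: there is a constant $C>0$ such that any two points $x,y\in B(a,r/2)\cap\dom(g)$ can be joined, inside $B(a,r)\cap\dom(g)$, by a rectifiable path of length at most $C\|x-y\|$. One gets this from the local conic structure of semialgebraic sets (see \cite{BCR}): near $a$ the pair $(B(a,r),\pol(g)\cap B(a,r))$ is homeomorphic to the cone over $(S,\pol(g)\cap S)$ for a small sphere $S$ centred at $a$, and $\pol(g)\cap S$ has dimension $\le n-3$, so $S\setminus\pol(g)$ is connected and quasiconvex with a uniform constant, which propagates to the cone. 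Integrating $\nabla g$ along such a path gives $|g(x)-g(y)|\le MC\|x-y\|$, so $g$ is Lipschitz on $B(a,r/2)\cap\dom(g)$, hence uniformly continuous there, hence extends continuously to $\overline{B(a,r/2)}$ and in particular to $B(a,r/2)\cap\pol(g)$. Letting $a$ range over $\pol(g)$ and using that $g$ is already $C^\infty$ on $\dom(g)$, we obtain a continuous extension of $g$ to all of $\R^n$, i.e.\ $g\in\SR^0(\R^n)$.

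The main obstacle is precisely this quasiconvexity statement; everything else is the fundamental-theorem-of-calculus estimate plus the bookkeeping with Proposition \ref{equivalence3bis}. Note that the boundedness of $\nabla g$ is essential and cannot be relaxed to mere local boundedness of $g$: for $g=\frac{xy}{x^2+y^2}$ the partial derivative $\partial_x g=\frac{y(y^2-x^2)}{(x^2+y^2)^2}$ is unbounded near the origin, consistently with $g$ failing to extend continuously there. (Compare also the proof of Corollary \ref{equivalence2}, which runs along similar lines but presupposes $g\in\SR^0(\R^n)$; the point here is to produce that continuity rather than assume it, which is why the coordinate-change device of that proof is not needed.)
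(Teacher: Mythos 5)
Your overall architecture is exactly the paper's: your auxiliary fact is Lemma \ref{lem-top} of the paper (if $\pol(g)$ has codimension at least two and the first-order partials of $g$ extend continuously, then $g$ itself extends continuously), and the finite downward recursion on the order followed by the appeal to Proposition \ref{equivalence3bis} is precisely how the paper deduces the theorem from that lemma. The direct implication and all the bookkeeping (inclusion of indeterminacy loci, agreement of extensions on dense subsets) are fine.

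The one point of divergence is your justification of the auxiliary fact, and that is where there is a genuine gap. You obtain the local quasiconvexity of $B(a,r)\setminus\pol(g)$ from the local conic structure of semialgebraic sets, arguing that the link $S\setminus\pol(g)$ is quasiconvex with a uniform constant and that this ``propagates to the cone''. But the conic structure theorem only furnishes a semialgebraic \emph{homeomorphism} between $B(a,r)$ and the cone over $S$; such homeomorphisms are not bi-Lipschitz in general, so neither path lengths nor quasiconvexity constants transfer through them. Moreover, the uniform quasiconvexity of $S\setminus\pol(g)$ is itself asserted rather than proved: it does not follow formally from $\dim(\pol(g)\cap S)\le n-3$ for an arbitrary closed subset, and establishing it for semialgebraic sets requires an argument of the very kind you are trying to outsource. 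The quasiconvexity statement you need is true, but the clean way to get it --- and the route the paper takes in proving Lemma \ref{lem-top} --- is elementary: choose a $2$-plane $P$ through $a$ and $b$ meeting $\pol(f)$ in only finitely many points (possible because $\pol(f)$ has dimension at most $n-2$ while the planes containing the line through $a$ and $b$ form an $(n-2)$-parameter family), and join $a$ to $b$ either by the segment $[a,b]$ or by the two legs of a right triangle in $P\cap U$ with hypotenuse $[a,b]$ avoiding those finitely many points; the mean value theorem on each leg then yields $|f(a)-f(b)|\le \sqrt{2n}\,K\Vert a-b\Vert_1$ directly. Substituting this device for the conic-structure argument closes the gap and makes your proof coincide with the paper's.
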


Before the proof we state a key lemma.

\begin{lem}\label{lem-top} Let $f\in \R(x_1,\ldots,x_n)$ be a rational function with $\pol(f)$ of codimension at least two in $\R^n$. Assume that the first partial derivatives of $f$ can be extended continuously to $\R^n$. Then $f$ can be extended continuously to $\R^n$ and the extension is $1$-regulous.
\end{lem}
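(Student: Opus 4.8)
The plan is to show first that $f$ extends continuously to $\R^n$, and then upgrade this to $1$-regulousness by invoking Corollary \ref{equivalence2}. For the continuity, let $a\in\pol(f)$; since $\pol(f)$ has codimension at least two, $\R^n\setminus\pol(f)$ is connected (indeed it is the domain of a nice function) and the complement of $\pol(f)$ near $a$ is path-connected. The key observation is that because the first partial derivatives $\partial_{x_i}f$ extend continuously to $\R^n$, they are in particular locally bounded near $a$; say $|\partial_{x_i}f|\le M$ on a ball $B(a,r)$ intersected with $\dom(f)$. Given two points $b,b'\in B(a,r)\cap\dom(f)$, I would connect them by a path inside $\dom(f)\cap B(a,r)$ that is a concatenation of finitely many axis-parallel segments (this is possible, avoiding the codimension-two set $\pol(f)$, after a generic choice of such segments), and integrate the appropriate partial derivative along each segment via the one-variable fundamental theorem of calculus. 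This gives $|f(b)-f(b')|\le C M\,\mathrm{diam}$, where the constant $C$ controls the total length of the connecting path; with a little care this yields a Cauchy estimate showing $f(b)$ has a limit as $b\to a$ in $\dom(f)$. Hence $f$ extends continuously to $a$, and since $a\in\pol(f)$ was arbitrary, $f$ extends continuously to all of $\R^n$.

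Once $f$ is continuous on $\R^n$ and rational, it is by definition in $\SR^0(\R^n)$. Now by hypothesis the partial derivatives $\partial_{x_i}f$, seen as rational functions on $\dom(f)$, extend continuously to $\R^n$. This is exactly the criterion of Corollary \ref{equivalence2}, so $f\in\SR^1(\R^n)$, i.e. the continuous extension of $f$ is $1$-regulous, as claimed.

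The main obstacle I anticipate is the construction of the axis-parallel connecting path and the bookkeeping needed to make the length constant $C$ uniform as $b,b'\to a$. Concretely: one must show that for $b,b'$ close to $a$ one can route an axis-parallel polygonal path between them, staying in $\dom(f)\cap B(a,\rho)$ for a controlled $\rho$, using only $O(n)$ segments whose total length is $O(|b-b'|)$. The codimension-$\ge 2$ hypothesis on $\pol(f)$ is precisely what allows a generic axis-parallel segment (or short staircase) to avoid $\pol(f)$, since a line in a fixed coordinate direction through a generic point meets $\pol(f)$ in a finite set and can be perturbed off it. A clean way to organise this is to first reduce, by a generic linear change of coordinates as in the proof of Corollary \ref{equivalence2}, to the situation where no coordinate axis direction through $a$ lies locally in $\pol(f)$; then the "staircase" argument works directionwise. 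One should also handle the degenerate possibility that $f$ is a priori unbounded near $a$ — but the local boundedness of the $\partial_{x_i}f$ together with the staircase estimate rules this out, since it bounds the oscillation of $f$ on each ball.

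Finally, I would remark that this lemma is the base case $k=1$ of Theorem \ref{thm-top}: having it, the general statement follows by applying the lemma to the $(k-1)$-st partial derivatives and inducting, exactly along the lines of Proposition \ref{equivalence3}.
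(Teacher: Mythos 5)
Your proposal is correct and follows essentially the same route as the paper: local boundedness of the extended partial derivatives yields a Lipschitz-type oscillation bound for $f$ near each point of $\pol(f)$ via a polygonal path avoiding the codimension-$\geq 2$ indeterminacy locus, whence $f$ has a limit there, and Corollary \ref{equivalence2} then upgrades the continuous extension to $1$-regulousness. The only (inessential) difference is the path construction: the paper joins $a$ and $b$ either by the segment $[a,b]$ or by the two legs of a right triangle inside a generic plane meeting $\pol(f)$ in finitely many points, rather than by an axis-parallel staircase.
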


\begin{proof}
It is sufficient to prove that $f$ is regulous by Corollary
\ref{equivalence2}. Let $x\in \pol(f)$, and let us prove that $f$ is
continuous at $x$. The absolute values of the continuous extensions of
the first partial derivatives are locally bounded around $x$, say by
$K>0$ on a small convex neighbourhood $U$ of $x$ in $\R^n$.
We are going to prove that there exists $K'\in \R_+^*$ such that, for $a,b\in \dom(f)\cap U$, we have
$$|f(a)-f(b)| \leq  K'\Vert a-b\Vert_1$$
where $\Vert\cdot\Vert_1$ denotes the $L^1$ norm on $\R^n$.\par 
Once this inequality is proved, we obtain that the limit of $f$ at $x$ exists. Indeed, first $f$ is locally bounded around $x$, and second if $(\alpha_i)$ and $(\beta_i)$ are sequences converging both to $x$ such that $f(\alpha_i)$ converges to $l_1$ and $f(\beta_i)$ converges to $l_2$, then the inequality implies $l_1=l_2$.

Let us prove the announced inequality. There exists a plane $P$
containing $a$ and $b$ and intersecting $\pol(f)$ in a finite number
of points since $\pol(f)$ has codimension two in $\R^n$. Then, we join
$a$ to $b$ by the segment $[a,b]$ if $[a,b]\cap \pol (f)$ is empty, or
by the two small sides of a right-angle triangle included in $P\cap U$ with hypotenuse $[a,b]$ otherwise. 
In the first case, we parametrize the segment $[a,b]$ by $\gamma: t\mapsto ta+(1-t)b$. The function $g:t\mapsto f\circ \gamma$ is $C^{1}$ by composition, and
$$g'(t)=\sum_{i=1}^n (a_i-b_i)({\partial_{x_i}}f) (\gamma(t)).$$
Then
$$|f(a)-f(b)|=|g(1)-g(0)|\leq \sup_{t\in [0,1]} |g'(t)| \leq   K \Vert a-b\Vert_1$$
by the mean value theorem. 
In the second case, denoting by $c$ the third vertex of the triangle, we obtain similarly
$$|f(a)-f(b)| \leq   K (|a-c|_1+|c-b|_1)\leq \sqrt{2n}K\Vert a-b\Vert_1.$$
\end{proof}

\begin{proof}[Proof of Theorem \ref{thm-top}] Assume that the $k$-th partial derivatives  of $f$, seen on $\dom (f)$, can be extended
continuously to $\pol(f)$. Then the $(k-1)$-th partial derivatives of $f$ satisfy the condition in Lemma \ref{lem-top}, so that they can be extended continuously on 
$\R^n$. Iterating the process $k-2$ times, we obtain that $f$ is $k$-regulous by Proposition \ref{equivalence3}.
\end{proof}

\subsection{Flatness}
The notion of $k$-flatness, for $k\in \N$, will play an important role in the paper when discussing about sum of squares, (see Proposition \ref{plateapresuneclat} and the proof of Theorem \ref{SOS}).

\begin{defn} Let $f:\R^n\to \R$ be a real function and $k\in \N$ be an
  integer. We say that $f$ is $k$-flat at a point $a\in \R^n$ if $f$
  is of class $C^k$ at $a$ and if all partial derivatives of $f$ of
  order less than or equal to $k$ are equal to zero at $a$. We say that $f$ is $k$-flat if $f$ is $k$-flat at any point of its zero set.
\end{defn}

A key tool to discuss about flatness of regulous functions is the \L
ojasiewicz property \cite[Lem. 5.1]{FHMM}. It says that, given $f\in
\mathcal R^k(\R^n)$ and $g$ $k$-regulous on $\R^n\setminus \mathcal
Z(f)$, there exists an integer $N\in \N$ such that the function
$f^Ng$, extended by zero at $\mathcal Z(f)$, is $k$-regulous on $\R^n$.

Using the \L ojasiewicz property, we show below that a sufficiently
big power of a regulous function is $k$-flat.

\begin{prop}\label{kflat2} Let $k$ and $n$ be positive integers and $f\in \mathcal R^0(\R^n)$. There exists $N\in \N$ such that for all integers $m\geq N$ the function $f^m$ is $k$-flat.
\end{prop}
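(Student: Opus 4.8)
The plan is to reduce the statement to the \L ojasiewicz property together with the characterisation of $k$-regulous functions via partial derivatives (Proposition \ref{equivalence3} or Theorem \ref{thm-top}), by exploiting the fact that a high power of $f$ can be made $C^k$ and that its low-order partial derivatives at a zero of $f$ are forced to vanish by the product (Leibniz) rule. First I would note that it suffices to find one integer $N$ for which $f^N$ is $k$-flat, since then $f^m=f^{m-N}\cdot f^N$ is again $k$-flat for all $m\ge N$: indeed $f^N$ vanishes together with all its partial derivatives up to order $k$ on $\mathcal Z(f^N)=\mathcal Z(f)$, and by the Leibniz formula every partial derivative of order $\le k$ of the product $f^{m-N}f^N$ is a sum of terms each containing a factor which is a partial derivative of $f^N$ of order $\le k$, hence vanishes on $\mathcal Z(f)$; one only needs to know in addition that $f^{m-N}$ is itself $C^k$, which follows from Corollary \ref{arithmetique} applied to $f^N\in\mathcal R^k(\R^n)$, or simply from the fact that products of $k$-regulous functions are $k$-regulous once $f^N$ is.

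Next I would produce the single power $f^N$. Write $f=\frac{p}{q}$ on $\dom(f)$. Away from $\mathcal Z(f)$ the function $f$ is nowhere zero and regulous, hence $1/f=q/p$ is regular on $\dom(f)\setminus\mathcal Z(f)$; in fact any power of $f$ is $k$-regulous (indeed regulous, and by Corollary \ref{arithmetique} we control regularity) on the open set $\R^n\setminus\mathcal Z(f)$. The idea is then to apply the \L ojasiewicz property \cite[Lem.~5.1]{FHMM}: taking the ``$f$'' of that lemma to be $f$ itself (which lies in $\mathcal R^0(\R^n)\subset\mathcal R^k$? — here one must be slightly careful, see below) and the ``$g$'' to be a suitable power of $f$ that is $k$-regulous on $\R^n\setminus\mathcal Z(f)$, one obtains an integer $M$ such that $f^M\cdot f^{(\text{power})}$, extended by $0$ on $\mathcal Z(f)$, is $k$-regulous on all of $\R^n$. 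Thus some power $f^N$ is $C^k$ on $\R^n$ and vanishes on $\mathcal Z(f)$. It remains to see that its partial derivatives of order $\le k$ also vanish on $\mathcal Z(f)$: this is automatic once we know $f^N$ is $C^k$ and, say, that an even higher power $f^{N'}$ with $N'$ larger is $k$-flat — or more directly, iterating: apply the above to get $f^{N}$ of class $C^k$ vanishing on its zero set, then apply Proposition \ref{equivalence3} / the Leibniz argument of the previous paragraph to $f^{N}\cdot f^{N}$, etc., to push vanishing down through all derivative orders. One clean way: first get $f^N$ regulous and zero on $\mathcal Z(f)$; then $f^{N}\cdot (f^{N}$-times-a-$k$-regulous-bounded-factor$)$ handles the first derivatives; inducting on the derivative order $j=1,\dots,k$ and using that $\partial_{x_i}(f^{Nj})$ is a multiple of $f^{N(j-1)}$ times something regulous, one sees that a sufficiently large power is $k$-flat.

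The main obstacle I anticipate is the bookkeeping needed to invoke the \L ojasiewicz property correctly, since that lemma as quoted requires the multiplier $f$ to be $k$-regulous and the function $g$ to be $k$-regulous off $\mathcal Z(f)$, whereas here we start only with $f\in\mathcal R^0$. The fix is to work one power at a time: $f^2$ (or a suitable power) may fail to be $C^k$, but any power of $f$ is arc-algebraic and regulous on $\R^n\setminus\mathcal Z(f)$, and Corollary \ref{arithmetique} shows $p^lf^t\in\mathcal R^{k+l}$, so for $l$ large $p^lf^t$ is $k$-regulous on $\R^n$; combining this with $f^t=(p^lf^t)/p^l$ and the \L ojasiewicz property for $p$ (which is a polynomial, hence in $\mathcal R^k$ for every $k$) applied to the $k$-regulous-off-$\mathcal Z(p)$ function $f^t/1$ — and noting $\mathcal Z(p)\supseteq\mathcal Z(f)$ by Lemma \ref{lemme1} — yields the desired globally $C^k$ high power of $f$ vanishing on $\mathcal Z(f)$. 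Once that is in hand, the vanishing of all lower-order derivatives on $\mathcal Z(f)$ for a still larger power is the routine Leibniz computation sketched above, and this completes the proof.
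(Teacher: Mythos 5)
Your argument has a genuine gap, and it is structural: you aim to produce a power $f^N$ that is of class $C^k$ on all of $\R^n$, but no such power need exist. Take $f=1+\frac{x^3}{x^2+y^2}\in\SR^0(\R^2)$: here $\pol(f)=\{o\}$, $f(o)=1$, and $\partial_x(f^m)=mf^{m-1}\partial_x f$ with $f^{m-1}\to 1$ at $o$ while $\partial_x f=\frac{x^4+3x^2y^2}{(x^2+y^2)^2}$ has no limit at $o$; so $f^m$ is not even $C^1$ at $o$ for any $m$. The statement is nevertheless true because $k$-flatness is only a condition at the points of $\mathcal Z(f)$, and $o\notin\mathcal Z(f)$. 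The same example breaks two of your intermediate steps: a power of $f$ is \emph{not} $k$-regulous on $\R^n\setminus\mathcal Z(f)$ (that set still contains the bad poles), so the \L ojasiewicz lemma cannot be applied with $g$ a power of $f$ on that set; and your repair via Corollary \ref{arithmetique} only controls $p^lf^t$, from which you cannot recover $C^k$-regularity of a pure power of $f$ by dividing by $p^l$. Finally, the reduction in your first paragraph (``one $N$ suffices'') also fails: the Leibniz expansion of the order-$\le k$ derivatives of $f^{m-N}f^N$ needs $f^{m-N}$ to be $C^k$, and for $m-N=1$ this is $f$ itself, which is only continuous.

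The missing idea is to work only at $\mathcal Z(f)$ and to dominate the \emph{derivatives} of $f$ rather than $f$ itself. Writing $f=p/q$ with $p,q$ coprime, every partial derivative $\partial^j_{v_1\cdots v_j}f$ with $j\le k$ is, on $\dom(f)$, a polynomial divided by $q^{2^j}$, and $q^{2^j}$ divides $q^{2^k}$. The \L ojasiewicz property furnishes an $M$ such that $f^M\cdot\frac{1}{q^{2^k}}$ tends to $0$ at every point of $\mathcal Z(f)$; consequently $f^M\cdot\partial^j_{v_1\cdots v_j}f\to 0$ on $\mathcal Z(f)$ for all $j\le k$. Now set $N=kM+k$ and take $m\ge N$: the Leibniz and chain rules write $\partial^k_{v_1\cdots v_k}(f^m)$ on $\dom(f)$ as a sum of terms, each a product of at most $k$ partial derivatives of $f$ of order $\le k$ with $f$ raised to a power at least $m-k\ge kM$; pairing each derivative factor with a block $f^M$ shows every term tends to $0$ at $\mathcal Z(f)$. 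One concludes with Theorem \ref{thm-top}, applied locally near $\mathcal Z(f)$, that $f^m$ is $C^k$ at the points of $\mathcal Z(f)$ and $k$-flat. This treats all $m\ge N$ uniformly, which also makes your first reduction unnecessary.
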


\begin{proof}
Write $f=\frac{p}{q}$ with $p$ and $q$ coprime polynomials.  
By \cite[Lem. 5.1]{FHMM}, there exists $M$ such that $f^M.\frac{1}{q^{2^k}}$ can be extended continuously by $0$ on $\Z(f)$. As a consequence, the function 
$f^M.\partial^j_{v_1\cdots v_j}
f$ can be extended continuously by $0$ on $\Z(f)$, for any $j\in \{1,\ldots,k\}$ and $v_1,\ldots,v_j\in \R^n$. 

Set $N=kM+k$. Consider an integer $m\geq N$ and vectors $v_1,\ldots,v_k\in \R^n$. Using the classical derivation rules, and writing the regular function
$\partial^k_{v_1\cdots v_k} f^m$ on $\dom(f)$ as a sum of some powers of $f$ times
some $j$-th partial derivative functions of $f$ with $1\leq j\leq k$,
we see that $\partial^k_{v_1\cdots v_k} f^m$ can
be extended continuously by $0$ on $\Z(f)$. Then the function $f^m$ is
of class $C^k$  on $\Z(f)$ by Theorem \ref{thm-top}, and moreover $f^m$ is $k$-flat.
\end{proof}

We obtain even more if we assume than the poles of the regulous function are also zeroes.

\begin{cor} Let $k$ and $n$ be positive integers and $f\in \mathcal R^0(\R^n)$. Assume $\pol(f)\subset \mathcal Z(f)$. 
There exists $N\in \N$ such that for all integers $m\geq N$ the function $f^m$ is of class $C^k$ on $\R^n$ and $f^m$ is $k$-flat.
\end{cor}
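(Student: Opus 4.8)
The plan is to read this off from Proposition~\ref{kflat2} together with the extra hypothesis $\pol(f)\subset\Z(f)$. Proposition~\ref{kflat2} already provides an integer $N$ such that $f^m$ is $k$-flat for every integer $m\geq N$; fix such an $m$. By definition of $k$-flatness, $f^m$ is then of class $C^k$ in a neighbourhood of every point of its zero set, and $\Z(f^m)=\Z(f)$. So the only thing that remains to be upgraded is the regularity of $f^m$ at the points of $\R^n$ lying outside $\Z(f)$.

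First I would note that on the open set $\dom(f)=\R^n\setminus\pol(f)$ the function $f$ is regular, hence so is $f^m$, and therefore $f^m$ is of class $C^\infty$ there, in particular $C^k$. Then comes the one place where the hypothesis is used: since $\pol(f)\subset\Z(f)$ we have
$$\R^n=\dom(f)\cup\pol(f)\subseteq\dom(f)\cup\Z(f),$$
so every point of $\R^n$ has a neighbourhood on which $f^m$ is $C^k$ --- either because it belongs to the open set $\dom(f)$, on which $f^m$ is even $C^\infty$, or because it belongs to $\Z(f)$, where $k$-flatness provides the $C^k$ property locally. As being of class $C^k$ is a local condition, we conclude $f^m\in C^k(\R^n)$; and the $k$-flatness of $f^m$ is exactly the content of Proposition~\ref{kflat2}, so nothing further is needed.

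An equivalent route, closer to the style of the previous proofs, is to invoke Theorem~\ref{thm-top}: the indeterminacy locus $\pol(f^m)=\pol(f)$ has codimension at least two in $\R^n$, and the computation already performed in the proof of Proposition~\ref{kflat2} shows that for $m\geq N$ each $k$-th partial derivative $\partial^k_{x_{i_1}\cdots x_{i_k}}f^m$ extends continuously by $0$ on $\Z(f)$; since $\pol(f)\subset\Z(f)$, this is an extension to all of $\pol(f)$, and Theorem~\ref{thm-top} gives $f^m\in\SR^k(\R^n)$ at once. Either way there is no serious obstacle here --- this is a corollary in the strict sense --- and the only point deserving a line of care is the bookkeeping of which points of $\R^n$ are poles, which are zeros, and which lie in $\dom(f)$, so that the hypothesis $\pol(f)\subset\Z(f)$ is used precisely to glue the $C^\infty$ behaviour on $\dom(f)$ to the $C^k$ behaviour on the zero set.
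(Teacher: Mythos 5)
Your proposal is correct and is essentially the paper's own argument, just spelled out in more detail: the paper's one-line proof likewise combines the $C^\infty$ regularity of $f^m$ on $\dom(f)$ with the $C^k$ regularity at points of $\Z(f)$ supplied by Proposition \ref{kflat2}, using $\pol(f)\subset\Z(f)$ to cover all of $\R^n$. The alternative route via Theorem \ref{thm-top} is a fine variant but adds nothing needed here.
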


\begin{proof}
The function $f$ is of class $C^{\infty}$ on $\R^n\setminus \pol(f)$, so the result is a direct consequence of Proposition \ref{kflat2}. 
\end{proof}

\section{Regulous functions on $\R^2$}

We have seen up to now that the partial derivatives of a regulous function have a very nice behaviour. We continue to investigate which good properties these functions may satisfy. Let us raise some basic questions.

\begin{Q}\label{ques}
\begin{flushleft}\end{flushleft}
\begin{enumerate}
\item Let $f\in \SR^0 (\R^n)$ be a regulous function. Are the partial derivatives of $f$ locally bounded at any point $a\in\pol(f)$?
\item Let $k\in\N$. Let $f\in\SR^k
(\R^n)$ and $h\in\SR^{k+1}(\R^n)$ be regulous functions such that the indeterminacy locus $\pol(f)$ of $f$ is included in the zero set $\Z (h)$ of $h$. Do we
get that the product $hf$ belongs to $\SR^{k+1}(\R^n)$?
\item Let $k\in\N$ and let $f\in\SR^0 (\R^n)$ be a regulous function. Do we have an equivalence between the fact that $f$ is $k$-regulous and the property that
 $f$ has a polynomial expansion of order $k$ at any point of $\R^n$, i.e. $\forall a\in\R^n$, there exists a polynomial
$P_k\in\R[x_1,\ldots,x_n]$ of degree less than or equal to $k$ such that
$$f(a+h)=f(a)+P_k(h)+o(||h||^k)$$
for $h$ close to zero?
\end{enumerate}
\end{Q}

Next examples show that all these questions have a negative answer.

\begin{ex}
\begin{flushleft}\end{flushleft}
\begin{enumerate}
\item Let $f:\R^2\to \R$ be the regulous function defined by 
$$f (x,y)=\left\{ \begin{array}{ll}
\dfrac{yx^2}{x^2+y^4}\,\,\,{\rm if}\,\, (x,y)\not=(0,0)  \\
0\,\,\,~~~~~~~~~~~~~~~~~~~~~~{\rm ~~~~~~~~~~~~~~~~~~~~~~~~otherwise}
\end{array}\right.$$
Then ${\partial}_{ x}f(x,y)=\dfrac{2xy^5}{(x^2+y^4)^2}$
for $(x,y)\not=(0,0)$, is not bounded along the arc given by $x=t^2$ and $y=t$.
\item Keep $f$ as in the first example, and choose for $h$ the function $h=y$. Denote by $g$ the product $g=yf$. Then $g\not\in \SR^1
(\R^2)$. Indeed, the partial derivative of $g$ along the $x$-axis is equal to
$${\partial}_{ x}g(x,y)=\dfrac{2xy^6}{(x^2+y^4)^2}$$ and 
$\lim_{t\rightarrow 0}{\partial}_{
  x}g(t^2,t)=\dfrac{1}{2}\not= 0={\partial}_{ x}g(0,0) $.
\item Take the same function $g$ as in the second example, so that $\pol(g)=\{ (0,0)\}$ and $g\not\in\SR^1(\R^2)$. However, 
$g(x,y)=o(\sqrt{x^2+y^2})$, so that $g$ is a differentiable function at the
origin.
\end{enumerate}
\end{ex}

In the following, we determine a subring of the ring of regulous
functions on $\R^2$ for which Questions \ref{ques} admit a positive answer. In the reminder of the paper, we investigate the properties of this ring.

\subsection{Regular functions after one blowing-up}

By \cite{FHMM}, we know that the regulous functions correspond exactly
to the functions which become regular after a sequence of blowings-up
along smooth centers. For regulous functions on the plane, only
blowings-up along a finite number of points are necessary, and we can easily
count the minimal number of steps necessary.
This is why, in the following, we focus on the study of
regulous functions on the plane.

Note that the denominator of a regulous function in the plane admits only a finite number of zeroes (\cite[Cor. 3.7]{FHMM}). In particular such a denominator has constant sign on $\R^2$.

In order to estimate the complexity of a sequence of blowings-up, we recall the definition of an infinitely near point.
Let $\pi:M\to \R^2$ be a successive composition of blowings-up along a point. For $n\in \mathbb N$, an infinitely near point $a_n$ of order $n$ on $M$ is given by a sequence of points $a_0, \ldots, a_n$ on $M_0=\R^2, M_1, \ldots ,M_n=M$ such that $M_i$ is the blowing-up of $M_{i-1}$ at $a_{i-1}$ and $a_i$ is a point of $M_i$ with image $a_{i-1}$. We define the number of stages of $\pi$  to be the maximal order of infinitesimal points in $M$. In particular, if the number of stages is zero, then $\pi$ is the identity.

\begin{defn} 
Let $f\in\SR^0 (\R^2)$ and let $l\in\N$. We say that $f$ is
regular after $l$ blowings-up if there exist a 
$l$-stages composition $\pi:M\to \R^2$ of successive blowings-up along points such that $f\circ \pi$ is regular.

For $k\in\N\cup\{\infty\}$, we denote by $\SR^{k}_{[l]}(\R^2)$ the set
of $k$-regulous functions $f$ which are regular after at most $l$
blowings-up.
\end{defn}

\begin{prop} For $k$ and $l$ in $\N$, the set $\SR^{k}_{[l]}(\R^2)$ is a ring.
\end{prop}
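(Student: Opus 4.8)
The plan is to show that $\SR^k_{[l]}(\R^2)$ is closed under addition, multiplication and contains the constants, since it is evidently a subset of the ring $\SR^k(\R^2)$ (which is already known to be a ring from \cite{FHMM}), so it suffices to check it is a subring. Containing $0$, $1$ and being closed under negation is immediate: the identity map $\pi=\id$ is a composition of $0$ stages of blowings-up and makes any constant regular, while $-f$ is made regular by exactly the same $\pi$ that works for $f$. The real content is in the closure under $+$ and $\times$, and the key point is that if $f$ is regular after a composition $\pi_f\colon M_f\to\R^2$ and $g$ is regular after $\pi_g\colon M_g\to\R^2$, we need a single composition of blowings-up of at most $l$ stages on which both $f$ and $g$ become regular; then $f\circ\pi$ and $g\circ\pi$ are regular, hence so are $f\circ\pi+g\circ\pi=(f+g)\circ\pi$ and $(fg)\circ\pi$, so $f+g$ and $fg$ lie in $\SR^k_{[l]}(\R^2)$.

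The main obstacle is therefore the following combinatorial/geometric fact: given two towers of point blowings-up of $\R^2$, each of at most $l$ stages, there is a common refinement which is again a tower of point blowings-up of at most $l$ stages, dominating both. The natural approach is to build $\pi$ point by point. Since the denominator of a regulous function in the plane has only finitely many zeros, the indeterminacy locus $\pol(f)$ is a finite set of points, and likewise for $g$; only these points (and infinitely near points lying over them) ever get blown up in $\pi_f$ or $\pi_g$. One constructs $\pi$ by induction on the stage: at stage $0$ one blows up the union of the sets of points of $\R^2$ that are blown up at the first stage of either $\pi_f$ or $\pi_g$; on the resulting surface one then has, over each such point, the data of ``what $\pi_f$ does next'' and ``what $\pi_g$ does next'', each a tower of at most $l-1$ stages, and one recurses. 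The order of a maximal chain of infinitely near points in the resulting $\pi$ is at most the maximum of the two orders, hence at most $l$. One should also note that blowing up extra points (points in the indeterminacy locus of one function but not the other, or points not needed at all) does no harm: if $f\circ\pi_f$ is regular and $\pi$ factors as $\pi=\pi_f\circ\rho$ with $\rho$ a further composition of blowings-up, then $f\circ\pi=(f\circ\pi_f)\circ\rho$ is still regular, since regular functions pull back to regular functions under any morphism.

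Concretely, I would phrase the construction as: let $S\subset\R^2$ be the (finite) union of the centers blown up at the first stage of $\pi_f$ and of $\pi_g$, let $M_1\to\R^2$ be the blowing-up of $\R^2$ along $S$ (equivalently, the successive blowing-up of the finitely many points of $S$, in any order — the result is independent of the order and is a $1$-stage composition), and observe that both $\pi_f$ and $\pi_g$ factor through $M_1$ after possibly performing these extra first-stage blowings-up. Over $M_1$, repeat with the second-stage centers, and so on; after at most $l$ steps the process terminates because both $\pi_f$ and $\pi_g$ have at most $l$ stages, and the total number of stages of the composite $\pi$ is exactly $\max$ of the number of stages used over the various points, which is $\le l$. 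This gives the common $\pi$, completing the argument. The only slightly delicate book-keeping is to make sure that ``blowing up a finite set of points at one stage'' is legitimately counted as a single stage in the sense of the definition of number of stages via maximal chains of infinitely near points — but this is immediate, since points of a single $\R^2$ that are blown up simultaneously contribute chains of length one.
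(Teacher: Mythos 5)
Your argument is correct and is essentially the paper's: the paper's proof is the single assertion that the number of stages needed to regularize a sum or product is bounded by the maximum of the numbers of stages of the two factors, which is exactly the common-refinement fact you establish. Your write-up simply supplies the details (union of centers at each stage, factorization through the individual towers, harmlessness of extra blowings-up) that the paper leaves implicit.
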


\begin{proof}
The number of stages necessary to regularize the product (or the sum) of two elements of $\SR^{k}(\R^2)$ is bounded by the maximum of the numbers of stages of both elements.
\end{proof}

\begin{ex}\begin{flushleft}\end{flushleft}
\begin{enumerate}
\item The regulous function $f$ given by $f(x,y)=\dfrac{x^3}{x^2+y^2}$ is regular after the blowing-up of $\R^2$ at the origin. In charts, we obtain $f(u,uv)=\dfrac{u}{1+v^2} $ and $f(uv,v)= \dfrac{u^3}{u^2+1}$ which are regular functions. Therefore $f\in \SR^{k}_{[1]}(\R^2)$.
\item Similarly, the regulous function $g$ given by $$g(x,y)=\dfrac{x^3(x-1)^3}{(x^2+y^2)((x-1)^2+y^2)}$$ is regular after a one stage blowing-up. Actually $g$ becomes regular after the blowing-up of the two points $(0,0)$ and $(1,0)$ in $\R^2$.
\item However the regulous functions $h$ defined by $h(x,y)=\dfrac{x^3}{x^2+y^4}$ does not belong to $\SR^{k}_{[1]}(\R^2)$, because the blowing-up of the unique indeterminacy point gives rise to a regulous function which is not regular. Actually, in one chart we obtain a regular function $h(u,uv)=\dfrac{u}{1+u^2v^4}$ whereas in the other chart $h_1(u,v)=h(uv,v)=\dfrac{u^3v}{u^2+v^2}$ is only regulous. The blowing-up of the origin in this second chart gives rise to a regular function since $h_1(r,rs)=\dfrac{r^2s}{1+s^2}$ and $h_1(rs,s)=\dfrac{r^3s^2}{r^2+1}$. As a consequence $h$ belongs to $\SR^{k}_{[2]}(\R^2)$.
\end{enumerate}
\end{ex}

Note that a regulous function $f$ belonging to $\mathcal R^k_{[l]}(\R^2)$ becomes automatically regular in the neighbourhood of an infinitely near point of order $l$.

\begin{lem}\label{lem-aux2} Let $l\in \N$ and let $\pi:M\to \R^2$ be a $l$-stages composition of blowings-up along points of $\R^2$. Let $a_l\in M$ be an infinitely near point of order $l$. For any $f\in \mathcal R^0_{[l]}(\R^2)$, the function $f\circ \pi$ is regular in a neighbourhood of $a_l$ in $M$.
\end{lem}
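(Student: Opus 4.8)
The plan is to argue by induction on $l$, after localising the statement. First, since $f\circ\pi$ being regular near $a_l$ depends only on the local ring $\mathcal O_{M,a_l}$, and the latter depends only on the chain $a_0\to a_1\to\cdots\to a_l$ attached to $a_l$ (the other centres of $\pi$ do not meet a neighbourhood of $a_l$), we may assume that $\pi$ is precisely the composition of the blow-ups at $a_0$, then at $a_1$, \dots, then at $a_{l-1}$. Moreover, if $a_0\notin\pol(f)$ then $f$ is regular in a neighbourhood of $a_0$, hence $f\circ\pi$, being the pullback of a regular function under a morphism of smooth varieties, is regular on $\pi^{-1}$ of that neighbourhood, in particular at $a_l$; so we may also assume $a_0\in\pol(f)$.

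I would then establish the following local form by induction on $l$: \emph{if $S$ is a smooth surface, $g$ a rational function on $S$, and $p\in S$ a point admitting a composition of blow-ups at points $\sigma\colon N\to S$ with at most $l$ stages such that $g\circ\sigma$ is regular in a neighbourhood of $\sigma^{-1}(p)$, then for every length-$l$ chain $p=p_0\to p_1\to\cdots\to p_l$ of infinitely near points beginning with the blow-up of $p$, the pullback of $g$ to the surface produced by this chain is regular at $p_l$.} For $l=0$ this is trivial, since then $\sigma=\mathrm{id}$ and $g$ is regular at $p_0=p$. For the inductive step, if $g$ is already regular at $p$ one concludes as in the previous paragraph; otherwise, the key observation is that $\sigma$, looked at near $\sigma^{-1}(p)$, must have the blow-up $\beta\colon\tilde S=\mathrm{Bl}_p S\to S$ as its first non-trivial step over $p$, because before $p$ is blown up the composition is an isomorphism over a neighbourhood of $p$ and leaves the indeterminacy there untouched, while $\sigma$, being a resolution, must blow up $p$ eventually. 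Writing $\sigma=\tilde\sigma\circ\beta$ near $\beta^{-1}(p)$, a chain of infinitely near points of order $k$ for $\tilde\sigma$ lying over $\beta^{-1}(p)$ extends through $p$ to a chain of order $k+1$ for $\sigma$; as $\sigma$ has at most $l$ stages, $\tilde\sigma$ has at most $l-1$ stages over $\beta^{-1}(p)$, and $(g\circ\beta)\circ\tilde\sigma=g\circ\sigma$ is regular near $\tilde\sigma^{-1}(p_1)$. Applying the induction hypothesis to $(g\circ\beta,\tilde S,p_1)$ and the remaining chain $p_1\to\cdots\to p_l$ yields that the pullback of $g\circ\beta$ to the surface produced by $p_1\to\cdots\to p_l$ is regular at $p_l$; since this surface, with this pullback, is exactly the one produced by $p_0\to\cdots\to p_l$ with the pullback of $g$, we are done.

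The lemma then follows by taking $S=\R^2$, $g=f$ and $p=a_0$: the hypothesis $f\in\SR^0_{[l]}(\R^2)$ furnishes, by definition, a composition $\sigma$ of blow-ups at points with at most $l$ stages making $f\circ\sigma$ regular everywhere, in particular near $\sigma^{-1}(a_0)$, and the chain $a_0\to\cdots\to a_l$ is of the required form.

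I expect the main obstacle to be the inductive step: making rigorous that a resolution of $g$ near $p$ necessarily begins with the blow-up of $p$, and that the residual composition over the exceptional curve still resolves $g\circ\beta$ near $p_1$ within at most $l-1$ stages (the stage count bookkeeping for chains of infinitely near points through $p$). A minor point is that one leaves the affine setting after the first blow-up; this is harmless because the assertion is local, $\mathrm{Bl}_p S$ is covered by affine charts isomorphic to open subsets of $\R^2$, and rationality and continuity of $g$ pass to $g\circ\beta$.
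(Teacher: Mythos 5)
Your proof is correct and follows essentially the same route as the paper's: walk along the chain $a_0\to\cdots\to a_l$, noting that either some intermediate pullback is already regular at $a_i$, or else each successive indeterminacy point must be blown up and so consumes one of the at most $l$ stages that the definition of $\mathcal R^0_{[l]}(\R^2)$ provides. The paper compresses your entire induction into the single assertion that if all the $a_i$'s ($i\le l-1$) are poles of the successive pullbacks then $a_l$ is regular ``because $f$ belongs to $\mathcal R^0_{[l]}(\R^2)$'', so your argument merely supplies the localisation and stage-counting bookkeeping that the paper leaves implicit.
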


\begin{proof} Set $M_0=\R^2$ and denote by $\pi_i:M_i\to M_{i-1}$, with $i\in \{1,\ldots,l\}$, the sequence of blowings-up defining the $l$-stages composition. Let $a_i\in M_i$, with $i\in \{0,\ldots,l\}$ the sequence of points giving the infinitely near point $a_l\in M$. If $a_0$ is not in $\pol(f)$, the conclusion is immediate. The same holds true if one of the $a_i$'s, with $i \in \{1, \ldots, l-1\}$, is not in $\pol(f\circ \pi_1 \circ \cdots \circ \pi_i)$. Finally if all the $a_i$'s, for $i \in \{0,\ldots, l-1\}$, are poles of the successives pull back of $f$, then $a_l$ is necessarily a regular point of $f\circ \pi$ because $f$ belongs to $\mathcal R^0_{[l]}(\R^2)$.
\end{proof}

\begin{rem}  Let $k\in\N\cup\{ \infty\}$. 
Note that a function in $\SR^{k}_{[0]}(\R^2)$ is regular, so it belongs automatically to $\SR^{\infty}(\R^2)$. We have the following sequence of inclusions
$$\SR^{\infty}(\R^2)=\SR^{k}_{[0]}(\R^2)\subset \SR^{k}_{[1]}(\R^2)\subset \SR^{k}_{[2]}(\R^2)\subset\cdots\subset\SR^k(\R^2).$$
\end{rem}

\begin{defn}\begin{flushleft}\end{flushleft}
\label{definite}
\begin{enumerate}
\item Let $a\in\R^2$. 
For $p\in\R[x,y]$, one has the unique decomposition into the finite sum 
  $p=\sum_{i\geq 0} p_i$
  where $p_i$ is a polynomial which is either zero or lies in $\mathbf m^i_{a}\setminus \mathbf m^{i-1}_{a}$.  The smallest integer $l$ such that $p_l\not=0$ is the order of $p$ at $a$, namely $l=\ord_{a}(p)$. 
 This decomposition is called the homogeneous decomposition of $p$ at $a$, and the $p_i$'s are the homogeneous components.\par
  When $a=o$, the polynomial $p_i$ is either zero or homogeneous of degree $i$.
\item Let $q\in\R[x,y]$ be homogeneous. In case the zero set $\mathcal Z(q)$ of $q$ coincides with the origin, we say that the homogeneous polynomial $q$ is definite. In that case the sign of $q$ is constant on $\R^2$, and we may precise accordingly whether $q$ is positive or negative definite.
\item A polynomial $p\in\R[x,y]$ is said to be locally (positive) definite at a point $a\in \R^2$ if the smallest degree homogeneous component of $p$ at $a$ is (positive) definite.
\end{enumerate}
\end{defn}

Note that any divisor of a definite homogeneous polynomial is definite and more precisely
\begin{lem}\label{productdefinitepolynomials}
Let $q\in \R[x,y]$ be an homogeneous polynomial. If $q_1,\ldots,q_r$ are polynomials in $\R[x,y]$ such that $q=q_1q_2\ldots q_r$, then all $q_i$'s are homogeneous. Moreover $q$ is definite if and only if each $q_i$ is definite.
\end{lem}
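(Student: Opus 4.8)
First I would prove that each factor is homogeneous. Write $q = q_1 q_2 \cdots q_r$ and decompose each $q_i = q_{i,d_i} + (\text{lower order terms})$ where $q_{i,d_i}$ is the top-degree part and, separately, let $q_{i,e_i}$ denote its lowest-degree part (its homogeneous component at the origin of smallest degree, in the sense of Definition \ref{definite}(1)). Multiplying out, the top-degree homogeneous component of $q$ is $\prod_i q_{i,d_i}$ and the bottom-degree one is $\prod_i q_{i,e_i}$, since $\R[x,y]$ is an integral domain so none of these products vanishes. Because $q$ itself is homogeneous, its top-degree and bottom-degree components coincide with $q$, hence $\deg q = \sum_i d_i = \sum_i e_i$; since $e_i \le d_i$ for every $i$, this forces $e_i = d_i$ for all $i$, i.e.\ each $q_i$ has a single homogeneous component and is therefore homogeneous.

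Next, the definiteness equivalence. For the ``if'' direction, suppose every $q_i$ is definite. If $q$ were not definite there would exist $a \in \R^2 \setminus \{o\}$ with $q(a) = 0$; since $q(a) = \prod_i q_i(a)$ and $\R$ is a field, some $q_i(a) = 0$, contradicting that $q_i$ is definite (its zero set is $\{o\}$). Hence $\mathcal Z(q) = \{o\}$ and $q$ is definite. For the ``only if'' direction, suppose $q$ is definite and suppose some factor, say $q_1$, is not: then there is $a \ne o$ with $q_1(a) = 0$, whence $q(a) = 0$, contradicting $\mathcal Z(q) = \{o\}$. Thus each $q_i$ is definite.

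There is really no serious obstacle here; the only point that requires a moment of care is the homogeneity claim, where one must look simultaneously at the highest- and lowest-degree components and use that $\R[x,y]$ has no zero divisors to conclude that these products are nonzero. The sign refinement (positive versus negative definite) needs no argument beyond noting that a definite homogeneous polynomial, having connected complement of its zero set $\R^2 \setminus \{o\}$ and being nonvanishing there, has constant sign, as already observed in Definition \ref{definite}(2).
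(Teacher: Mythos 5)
Your proof is correct and follows essentially the same route as the paper: homogeneity of the factors is forced by comparing the lowest- and highest-degree homogeneous components of the product (nonzero because $\R[x,y]$ is a domain), and the definiteness equivalence follows from $\Z(q)=\bigcup_i\Z(q_i)$. You merely spell out the degree bookkeeping ($e_i\le d_i$ and $\sum e_i=\sum d_i$) a little more explicitly than the paper does.
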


\begin{proof}
If one of the $q_i$'s is not homogeneous, the product of the homogeneous components of lowest degree of the $q_i$'s is different from the product of the homogeneous components of highest degree of the $q_i$'s, in contradiction with the homogeneity of $q$. For the second part, it is sufficient to notice that the zero set of $q$ is the union of the zero sets of the $q_i$'s.
\end{proof}






Remind that if $f$ and $g$ are two functions from $\R^2$ to $\R$ and
$a\in\R^2$, we say that $f$ si negligeable (and we denote it by
$f\ll_{a} g$) in comparison with $g$ at the point $a$ if, for any $\epsilon>0$ there is one neighbourhood $U$ of $a$ such that for any $x\in U$, $|f(x)|\leq \epsilon |g(x)|$.\par
We say furthermore that $f$ is equivalent to $g$ at the point $a$ (and we denote it by $f\sim_{a} g$)
if $(f-g)\ll_a g$.

\par

Next lemma collects information about the order of the denominator and
numerator of a rational function. For its proof, it will be very
convenient to use of polar coordinates $(\rho,\theta)$ at the origin;
namely $x=\rho \cos\theta, y=\rho \cos\theta$, $\rho>0$,
$\theta\in[0,2\pi[$. 

\begin{lem}
\label{lem-polaire}
Let $f\in \R(\R^2)$ be a non-zero rational function such that $o=(0,0)\in \pol(f)$. Write
$f=\dfrac{p}{q}$ on $\dom (f)$ with $p,q\in\R[x,y]$ such that
$q$ doesn't vanish on $\dom (f)$. Let $p_n$ (resp. $q_m$) be the
homogeneous part of smallest degree of $p$ (resp. $q$), hence
$n=\ord_o(p)$ and $m=\ord_0(q)$.
\begin{enumerate}
\item \label{ordreindet}
Assume $f\in \SR^0(\R^2)$. 
 Then $n\geq m$ and $m$ is even. Moreover $f(o)=0$ if and only if $n>m$.
\item \label{lemhom}
Assume $q_m$ is definite. Then
\begin{enumerate}
\item
\begin{itemize}
\item if $n=m$ then $f$ is locally bounded at $o$.
\end{itemize}
\item
\begin{itemize}
\item $q$ is equivalent to $q_m$ at the origin.
\item $p\ll_o q$ if and only if $n>m$.
\item $p\sim_oq$ if and only if $n=m$ and $p_m=q_m$.
\end{itemize}
\end{enumerate}
\end{enumerate}
\end{lem}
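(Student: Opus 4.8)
The plan is to pass to polar coordinates $x=\rho\cos\theta$, $y=\rho\sin\theta$ at the origin and to read off, along a well chosen ray, the leading powers of $\rho$ in the numerator and in the denominator. Writing $p=\sum_{j\ge n}p_j$ and $q=\sum_{j\ge m}q_j$ for the homogeneous decompositions at $o$, one has, for each fixed $\theta$, $p(\rho\cos\theta,\rho\sin\theta)=\rho^n p_n(\cos\theta,\sin\theta)+o(\rho^n)$ and $q(\rho\cos\theta,\rho\sin\theta)=\rho^m q_m(\cos\theta,\sin\theta)+o(\rho^m)$ as $\rho\to 0^+$, together with the crude uniform bounds $|p_j(\cos\theta,\sin\theta)|\le M_j$ valid on the unit circle. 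Since $p_n$ and $q_m$ are non-zero homogeneous polynomials, each vanishes only in finitely many directions, so one can fix a direction $\theta_0$ with $p_n(\cos\theta_0,\sin\theta_0)\ne 0$ and $q_m(\cos\theta_0,\sin\theta_0)\ne 0$; along that ray $f(\rho\cos\theta_0,\rho\sin\theta_0)\sim c\,\rho^{n-m}$ with $c\ne 0$ as $\rho\to 0^+$.

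For part~(1): since $q$ does not vanish on $\dom(f)$ we have $\Z(q)\subseteq\pol(f)$, and $\pol(f)$ is finite (being Zariski closed of codimension at least two in $\R^2$, cf.\ \cite{FHMM}); hence $q$ has constant sign on $\R^2$ and, up to replacing $(p,q)$ by $(-p,-q)$, we may assume $q\ge 0$. Letting $\rho\to 0^+$ in $q(\rho\cos\theta,\rho\sin\theta)/\rho^m$ gives $q_m(\cos\theta,\sin\theta)\ge 0$ for every $\theta$, so $q_m\ge 0$ on $\R^2$; a non-zero non-negative homogeneous polynomial has even degree, so $m$ is even. Next, $f$ is regulous, hence continuous at $o$, so along the ray $\theta_0$ the value $c\,\rho^{n-m}$ tends to the finite number $f(o)$; this excludes $n<m$, gives $n\ge m$, and shows that $f(o)=0$ exactly when $n>m$ (when $n=m$ the limit is the non-zero value $p_m(\cos\theta_0,\sin\theta_0)/q_m(\cos\theta_0,\sin\theta_0)$).

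For part~(2): the decisive remark is that $q_m$ is definite if and only if, by compactness of the unit circle, there is $c>0$ with $|q_m(\cos\theta,\sin\theta)|\ge c$ for all $\theta$, equivalently $|q_m(x,y)|\ge c\,(x^2+y^2)^{m/2}$ on $\R^2$. Together with the bound $|q-q_m|\le C\rho^{m+1}$ for $\rho\le 1$, this gives $|q-q_m|\le (C/c)\rho\,|q_m|$ near $o$, hence $q\sim_o q_m$, and in particular $|q|\ge \frac{c}{2}\rho^m$ near $o$. Since also $|p|\le C'\rho^n$ for $\rho\le 1$, we obtain $|f|\le C''\rho^{n-m}$, which is bounded when $n=m$: this proves~(a). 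For~(b), everything reduces to comparing $n$ with $m$ and $p_m$ with $q_m$: the same estimate gives $p\ll_o q$ when $n>m$, while for $n\le m$ the ray $\theta_0$ shows $|p|/|q|$ does not tend to $0$, so $p\ll_o q\iff n>m$; and, by transitivity of $\sim_o$ together with $q\sim_o q_m$, we have $p\sim_o q\iff p\sim_o q_m$, which fails when $n>m$ (then $p\ll_o q_m$) and when $n<m$ (a ray estimate), while for $n=m$ one has $p-q_m=(p_m-q_m)+(\text{higher order terms})$, and this is $\ll_o q_m$ precisely when $p_m-q_m=0$, i.e.\ $p_m=q_m$.

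Most of the work is routine bookkeeping with the $o(\rho^j)$ and $O(\rho^j)$ estimates, which I would not write out in full. The two places carrying the real content, which I would single out as the crux, are: in part~(1), passing from the finiteness of $\pol(f)$ (hence of $\Z(q)$) to the fact that $q$, and then $q_m$, has constant sign, which forces $m$ to be even; and in part~(2), the uniform lower bound $|q_m|\gtrsim\rho^m$ coming from definiteness of $q_m$, which is exactly what collapses every comparison near $o$ into a comparison of the leading homogeneous data $p_n,p_m,q_m$.
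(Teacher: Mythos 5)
Your proof is correct and follows essentially the same route as the paper: polar coordinates at the origin, a ray in a direction where $p_n$ and $q_m$ are both nonzero to compare $n$ and $m$, the finiteness of $\Z(q)$ (codimension $\ge 2$ of $\pol(f)$) to force $q_m$ semi-definite and $m$ even, and the uniform lower bound $|q_m(\cos\theta,\sin\theta)|\ge c>0$ from definiteness to reduce every comparison to the leading homogeneous parts. The only difference is one of completeness, not of method: you write out the verifications of 2(a), of the equivalence $p\ll_o q\iff n>m$, and of both directions of the last bullet, which the paper leaves partly implicit.
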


\begin{proof}
\begin{enumerate}
\item  The order $m$ of $q$ at $o$ is necessary even since $o$ is an isolated zero of $q$
\cite[Prop. 3.5]{FHMM}. \par
Writing $f$ in polar coordinates, we get 
$$f(\rho\cos\theta,\rho\sin
\theta)=\rho^{n-m}\dfrac{p_n(\cos\theta,\sin\theta)+\rho\,
  \tilde{p}(\rho\cos\theta,\rho\sin 
\theta)}{q_m(\cos\theta,\sin\theta)+\rho\, \tilde{q}(\rho\cos\theta,\rho\sin
\theta)}$$
with $\tilde{p},\tilde{q}$ some real polynomials in two variables.

The homogeneous polynomial $p_n\times q_m$ is not identically zero on $\R^2$,
and hence there exists $\theta_0\in \R$ such that $q_m(\cos\theta_0,\cos\theta_0)\not=0$ and $p_n(\cos\theta_0,\cos\theta_0)\not=0$. When $\rho$ tends to zero with $\theta=\theta_0$, the continuity of $f$ at the origin implies $n\geq m$. Moreover we get $f(o)=0$ if $n>m$.

In case $n=m$ and $f(o)=0$, then the quotient 
$\dfrac{p_n(\cos\theta,\sin\theta)}{q_m(\cos\theta,\sin\theta)}$ vanishes 
for infinitely many $\theta\in\R$, which would say that $p_n=0$.

\item To show the first point, again let us write $q=q_m+\tilde{q}$ where $\deg \tilde{q}>\deg q_m=m$ and  

$$q(\rho\cos\theta,\rho\sin
\theta)=\rho^{m}\left(q_m(\cos\theta,\sin\theta)+\rho\,\tilde{q}(\rho\cos\theta,\rho\sin \theta)\right)$$
$$=\rho^{m}q_m(\cos\theta,\sin\theta)\left(1+\dfrac{\rho\,\tilde{q}(\rho\cos\theta,\rho\sin \theta)}{q_m(\cos\theta,\sin\theta)}\right)$$

which proves the first assertion since $|q_m(\cos\theta,\sin\theta)|$ does not vanish and hence is bounded from below.
\par
To prove the last point, it suffices to say that, by assumption, for any $\epsilon>0$, there is a neighbourhood $U$ of the origin where $|p-q_m|<\epsilon |q_m|$ and hence $\left |\frac{p-q_m}{q_m}\right |<\epsilon$. This shows that $\ord_o(p-q_m)>\ord_o (q_m)$ and hence $p_m=q_m$.

\end{enumerate}
\end{proof}

\begin{rem}
\begin{enumerate}
\item Point 2b) of Lemma \ref{lem-polaire} is no more true without the condition of definiteness of $q_m$ : for instance $x^2+y^4$ is not equivalent to $x^2$ at the origin.
\item The results of the lemma can be extended to $\R^n$ using spherical coordinates in place of polar coordinates.
\item One may replace the origin $o$ by any other point of the plane.
\end{enumerate}
\end{rem}

Let us state now one key tool for the following. 

\begin{thm}\label{equivfctregapres1eclat} Let $f\in \SR^0(\R^2)$. Write
$f=\frac{p}{q}$ on $\dom (f)$ with $p$ and $q$ coprime in $\R[x,y]$, $q$ positive on $\R^2$ and such that
$q$ doesn't vanish on $\dom (f)$. Then $f$ belongs to
$\SR^0_{[1]}(\R^2)$ if and only if $q$ is locally positive definite at any $a\in \pol(f)$.
\end{thm}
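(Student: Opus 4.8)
The plan is to reduce the statement to a purely local question at each point $a\in\pol(f)$ and then to analyze the single blowing-up $\pi_a$ of $\R^2$ at $a$ in the two standard affine charts, using the homogeneous decomposition of $q$ at $a$ together with Lemma \ref{lem-polaire}. Since $\pol(f)$ is finite (it is the zero set of $q$ by Lemma \ref{lemme1} and the fact that $q$ has constant sign), performing one stage of blowings-up at all poles of $f$ simultaneously is legitimate, so $f\in\SR^0_{[1]}(\R^2)$ if and only if for every $a\in\pol(f)$ the pullback $f\circ\pi_a$ is regular in a neighbourhood of $\pi_a^{-1}(a)$; this is the content of Lemma \ref{lem-aux2} applied with $l=1$. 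So from now on I fix $a\in\pol(f)$, and after translating I may assume $a=o$; write $p=\sum_{i\ge n}p_i$, $q=\sum_{i\ge m}q_i$ for the homogeneous decompositions at the origin, with $p_n,q_m\not=0$. By Lemma \ref{lem-polaire}(1), $n\ge m$ and $m$ is even (recall $o$ is an isolated zero of $q$).

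For the ``if'' direction, assume $q_m$ is positive definite. I compute $f\circ\pi_o$ in the chart $(u,v)\mapsto(u,uv)$: substituting $x=u$, $y=uv$ and factoring out the leading power $u^n$ from $p$ and $u^m$ from $q$ gives
$$f(u,uv)=u^{n-m}\,\frac{p_n(1,v)+u\,p_{n+1}(1,v)+\cdots}{q_m(1,v)+u\,q_{m+1}(1,v)+\cdots}.$$
The denominator at $u=0$ is $q_m(1,v)$, which is nowhere zero for $v\in\R$ because $q_m$ is definite; hence this expression is regular on the whole chart $\{u\in\R\}$, in particular near the exceptional divisor $\{u=0\}$. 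The symmetric computation in the chart $(u,v)\mapsto(uv,v)$ handles the remaining point of $\pi_o^{-1}(o)$, the only value not covered being $v=0$, where $q_m(u,0)$ could vanish; but that point lies in the first chart and has already been treated. Thus $f\circ\pi_o$ is regular near $\pi_o^{-1}(o)$, and doing this at every pole shows $f\in\SR^0_{[1]}(\R^2)$.

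For the ``only if'' direction, assume $q_m$ is not definite, i.e. $q_m$ has a real zero other than the origin; since $q_m$ is homogeneous this means there is a direction, say $(1,v_0)$ after a linear change of coordinates (the case of the direction $(0,1)$ being symmetric), with $q_m(1,v_0)=0$. I must produce a point of $\pi_o^{-1}(o)$ at which $f\circ\pi_o$ fails to be regular. In the $(u,v)$ chart, the denominator $q_m(1,v)+u\,q_{m+1}(1,v)+\cdots$ of the displayed expression for $u^{-(n-m)}f(u,uv)$ vanishes at $(0,v_0)$, so $f\circ\pi_o$ is not visibly regular there; the point is to rule out cancellation. Because $p$ and $q$ are coprime, $p_n$ and $q_m$ have no common homogeneous factor (a common factor would be homogeneous by Lemma \ref{productdefinitepolynomials} and would divide both $p$ and $q$), so after the substitution the numerator $p_n(1,v)+\cdots$ does not vanish identically along $\{u=0\}$ to an order matching that of the denominator at $v=v_0$; a short valuation count at $(0,v_0)$ then shows $f\circ\pi_o$ genuinely has a pole or indeterminacy point over $o$. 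Hence no single blowing-up at the finitely many poles can regularize $f$, and $f\notin\SR^0_{[1]}(\R^2)$.

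The main obstacle is the cancellation analysis in the ``only if'' part: one must be careful that even though $q_m(1,v_0)=0$, it could a priori happen that the full denominator and numerator of $f\circ\pi_o$ share a factor vanishing at $(0,v_0)$, so that $f\circ\pi_o$ is regular there after all. The way around this is to exploit that $p,q$ are assumed coprime in $\R[x,y]$ and that blowing-up does not create new common factors along the exceptional divisor except possibly powers of the exceptional coordinate $u$, which are already accounted for by the factor $u^{n-m}$; reducing to the leading homogeneous forms $p_n,q_m$ and using their coprimality (via Lemma \ref{productdefinitepolynomials}) pins this down. One should also double-check the bookkeeping when $q_m$ has its zero precisely in the direction $(0,1)$ not covered by the first chart — this is handled by the second chart symmetrically, and the two charts together cover all of $\pi_o^{-1}(o)$.
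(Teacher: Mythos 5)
Your overall strategy (localize at each pole, compute $f\circ\pi_a$ in the two affine charts, use the homogeneous decomposition of $p$ and $q$) is the same as the paper's, and your ``if'' direction is correct: when $q_m$ is definite, $q_m(1,v)$ and $q_m(u,1)$ have no real roots, so the two chart expressions are regular along the exceptional divisor.

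The ``only if'' direction, however, has a genuine gap. You justify the no-cancellation step by asserting that, since $p$ and $q$ are coprime in $\R[x,y]$, the leading forms $p_n$ and $q_m$ have no common homogeneous factor, ``because such a factor would divide both $p$ and $q$.'' That implication is false: a factor of the lowest-degree homogeneous component of $p$ need not divide $p$. For instance $p=y(x+y^2)=xy+y^3$ and $q=x(y+x^2)=xy+x^3$ are coprime, yet $p_2=q_2=xy$. Nothing in the hypotheses of the ``only if'' direction (where $q_m$ is assumed non-definite) excludes this phenomenon, and the subsequent ``short valuation count'' is not actually carried out, so the possibility of cancellation at $(0,v_0)$ is not ruled out. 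The statement you actually need --- and the one the paper proves as a sublemma --- is that coprimality survives the substitution itself: if $p$ and $q$ are coprime in $\R[x,y]$, then $p(u,uv)$ and $q(u,uv)$ are coprime in $\R[u,v][u^{-1}]$ (proved by pulling a putative common factor $r(u,v)$ back through $x=u$, $y/x=v$ and showing $x^ir(x,y/x)$ divides both $x^jp$ and $x^lq$, hence is a power of $x$ up to units). Since neither $p(u,uv)/u^{n}=p_n(1,v)+u\tilde p$ nor $q(u,uv)/u^{m}=q_m(1,v)+u\tilde q$ is divisible by $u$, these two polynomials are coprime in $\R[u,v]$, and then in the (factorial) local ring at $(0,v_0)$ the fraction $u^{\,n-m}\bigl(p_n(1,v)+u\tilde p\bigr)/\bigl(q_m(1,v)+u\tilde q\bigr)$ is in lowest terms with non-unit denominator, so $f\circ\pi_o$ is not regular there. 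You gesture at exactly this (``blowing-up does not create new common factors except powers of $u$'') in your closing paragraph, but the argument in the body rests on the false leading-form claim, so the proof as written does not close.
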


\begin{proof} 
As $\pol (f)$ consists of a finite number of points, the proof is local at these points. We discuss in the following the case of the origin of $\R^2$.

Changing $f$ by $f-f(o)$ (they keep the same denominator, the new
numerator and the denominator are still coprime), we may assume
$f(o)=0$. Put $n=\ord_o p$, $m=\ord_o q$, and denote by $p_n$ and
$q_m$ respectively the homogeneous part of 
smallest degree of $p$ and $q$ at the origin.
From Lemma \ref{lem-polaire}.\ref{ordreindet} we get that $n>m$ and $m$ is even.

Assume $f$ is regular after one blowing-up at the origin. 
Let
$\pi:M\rightarrow \R^2$ is the blowing-up at the origin. We
cover $M$ by two affine subsets $U_1,U_2$ such that 
in the first (resp. second) open subset, $\pi$ is given by 
$(u,v)\mapsto (u,uv)=(x,y)$ (resp. $(u,v)\mapsto (uv,v)$).
On $U_1$, the function $f_1=f\circ\pi_{|U_1}$ is 


$$f_1(u,v)=
u^{n-m}\dfrac{p_n(1,v)+u\tilde{p}(u,v)}{q_m(1,v)+u\tilde{q}(u,v)},$$
where $\tilde{p}$ and $\tilde{q}$ are polynomials in $u$ and $v$.

We claim that the polynomials in $u$ and $v$ namely
$\dfrac{p(u,uv)}{u^n}$ and $\dfrac{q(u,uv)}{u^m}$ are coprime as says the following:\\

\vskip 0.1cm\noindent
{\bf Lemma}\\
{\it Let $p(x,y)$ and $q(x,y)$ be two coprime polynomials in $\R[x,y]$. 
Then, $p(u,uv)$ et $q(u,uv)$ are coprime polynomials in $\R[u,v][u^{-1}]$. } 
\vskip 0.1cm

\begin{proof}
We write $p(u,uv)=r(u,v)s(u,v)$
and 
$q(u,uv)=r(u,v)t(u,v)$ where $r,s,t$ are in $\R[u,v][u^{-1}]$.
Then,
$p(x,y)=r(x,y/x)s(x,y/x)$ and $q(x,y)=r(x,y/x)t(x,y/x)$ in $\R[x,y][x^{-1}]$.
Hence, there are integers $i,j,l$ such that the polynomial $x^ir(x,y/x)$ is a common factor to $x^jp(x,y)$
and $x^lq(x,y)$ in $\R[x,y]$. Then $x^ir(x,y/x)$ is invertible in $\R[x,y][x^{-1}]$, namely $r(u,v)$ is invertible in 
$\R[u,v][u^{-1}]$.
\end{proof}

Since $f_1$ is regular on a neighborhood of the exceptional line $E$
with equation $u=0$
and since the order of vanishing of $f_1$ on the exceptional line is
$n-m$, we can write $f_1=u^{n-m}g_1$ with $g_1=\dfrac{p_n(1,v)+u.\tilde{p}(u,v)}{q_m(1,v)+u.\tilde{q}(u,v)}$ regular on a
neighborhood of the exceptional line $E$. Recall, by using the previous lemma, that the polynomials 
$p_n(1,v)+u\tilde{p}(u,v)$ and $q_m(1,v)+u\tilde{q}(u,v)$ are
coprime. If $q_m(1,v)$ had a real root denoted by $v_0$ then $(0,v_0)$
is a pole of $f'_1$, a contradiction. Thus $q_m(1,v)$ has no real root.\\
We proceed likewise with $f_2$, given by the second blowing up, to show that $q_m(u,1)$ has no real root. Hence
$q_m(x,y)$ vanishes only at the origin.\\

Conversely, assume now that $q_m$ is definite. Then, $q_m(1,v)$ and $q_m(u,1)$ do not
have real roots. It means that, using the above notations, the
functions $f_i=f\circ\pi_{|U_i}$ are regular on the exceptional
divisor and thus also on a neighborhood of the exceptional divisor.
\end{proof}

\subsection{Partial derivatives of regulous functions after one blowing-up}

The following proposition shows that the property of being regular
after one blowing-up passes to the partial derivative functions.
\begin{prop}
\label{derpart1reg}
Let $k$ be an integer and let $f\in \SR^{k+1}_{[1]}(\R^2)$. Then for any $v\in \R^2$, the partial derivative $\partial_v f$ belongs to $\SR^{k}_{[1]}(\R^2)$.
\end{prop}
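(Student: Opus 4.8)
The plan is to reduce everything to the local characterisation of $\SR^0_{[1]}(\R^2)$ given by Theorem \ref{equivfctregapres1eclat}, combined with the already-established stability results for partial derivatives of regulous functions (Corollary \ref{equivalence1} and Proposition \ref{equivalence3}). Since $f\in\SR^{k+1}_{[1]}(\R^2)$, we know $f\in\SR^{k+1}(\R^2)$, so by Corollary \ref{equivalence1} the partial derivative $\partial_v f$ is $k$-regulous on $\R^2$; what remains is to check that $\partial_v f$ is regular after one blowing-up, i.e.\ that its (reduced) denominator is locally positive definite at each of its indeterminacy points. Note first that it suffices to treat the coordinate directions $v=e_i$, since $\partial_v f$ is an $\R$-linear combination of the $\partial_{x_i}f$ and $\SR^k_{[1]}(\R^2)$ is a ring (in particular an $\R$-vector space).

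The computational heart is the following observation. Work locally at a point $a\in\pol(\partial_v f)$; by the remark after Lemma \ref{lem-polaire} we may take $a=o$. We have $\pol(\partial_v f)\subseteq\pol(f)$, so $o\in\pol(f)$ and, writing $f=p/q$ with $p,q$ coprime and $q$ positive on $\R^2$, Theorem \ref{equivfctregapres1eclat} tells us $q$ is locally positive definite at $o$, i.e.\ the lowest-degree homogeneous component $q_m$ of $q$ at $o$ is definite, with $m=\ord_o q$ even. Now $\partial_v f = (q\,\partial_v p - p\,\partial_v q)/q^2$, and $q^2$ is the natural denominator. We need the reduced denominator of $\partial_v f$ to be locally positive definite at $o$. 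Even before reducing the fraction, $q^2$ has lowest homogeneous component $q_m^2$, which is positive definite; and any divisor of a definite homogeneous polynomial is definite by Lemma \ref{productdefinitepolynomials}. So as long as we control the lowest-degree behaviour under cancellation, we are done. The cleanest route: let $d=\gcd(q\,\partial_v p - p\,\partial_v q,\ q^2)$; the reduced denominator is $q^2/d$, whose lowest homogeneous component divides that of $q^2$ (namely $q_m^2$) up to the lowest component of $d$ — so one must check the lowest component of $q^2/d$ is still definite. Since $q_m^2$ is definite, every homogeneous divisor of it is definite (Lemma \ref{productdefinitepolynomials}), and one checks that the lowest homogeneous component of a quotient $q^2/d$ of polynomials is the quotient of the lowest homogeneous components; hence it divides $q_m^2$ and is definite. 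Therefore the reduced denominator of $\partial_v f$ is locally positive definite at $o$ (after adjusting sign), and by Theorem \ref{equivfctregapres1eclat} again, $\partial_v f\in\SR^0_{[1]}(\R^2)$.

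Combining the two halves: $\partial_v f$ is $k$-regulous and regular after one blowing-up, hence $\partial_v f\in\SR^k_{[1]}(\R^2)$. I expect the main obstacle to be the bookkeeping around cancellation of common factors: one must argue that passing from the a priori denominator $q^2$ to the reduced denominator cannot destroy local definiteness at $o$. The key algebraic fact making this routine is Lemma \ref{productdefinitepolynomials} — every (homogeneous) divisor of a definite homogeneous polynomial is definite — together with the elementary fact that the lowest-order homogeneous component is multiplicative, so that the lowest component of the reduced denominator is a divisor of $q_m^2$. Alternatively, and perhaps more transparently, one can avoid the reduction issue entirely by working directly in the blowing-up charts at $o$: pulling back by $\pi:(u,v)\mapsto(u,uv)$ one checks using the coprimality lemma inside Theorem \ref{equivfctregapres1eclat} that $f_1=u^{n-m}g_1$ with $g_1$ regular near the exceptional line, whence $\partial_v f$ pulls back to something of the form $u^{n-m-1}(\text{regular near }E)$ by the chain rule and the explicit change of coordinates $\partial_x = \partial_u - (v/u)\partial_v$, $\partial_y=(1/u)\partial_v$; the factors of $1/u$ are harmless because they raise the power of $u$ by only one and $n-m\ge 1$ may fail only when $f(o)\ne 0$, a case excluded by first replacing $f$ by $f-f(o)$ as in the proof of Theorem \ref{equivfctregapres1eclat}. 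Either presentation works; I would present the denominator-divisor argument as the main line and mention the chart computation as the conceptual reason it works.
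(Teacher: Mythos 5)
Your proposal is correct and follows essentially the same route as the paper: reduce to $k$-regularity via Corollary \ref{equivalence1}, then write $\partial_v f=\bigl((\partial_v p)q-(\partial_v q)p\bigr)/q^2$ locally at a pole and use Theorem \ref{equivfctregapres1eclat} together with Lemma \ref{productdefinitepolynomials} to see that the (reduced) denominator remains locally positive definite. You simply spell out the cancellation bookkeeping (lowest homogeneous components are multiplicative, so the reduced denominator's lowest component divides $q_m^2$) that the paper leaves implicit in its appeal to Lemma \ref{productdefinitepolynomials}.
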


\begin{proof} By Corollary \ref{equivalence1}, we get that $\partial_v
  f\in\SR^{k}(\R^2)$. So, we are left to prove that $\partial_v f $ is
  regular after one blowing up. Suppose $o\in
  \pol(f)$ and $o\in \pol(\partial_v f)$ and also that 
$f=\dfrac{p}{q}$ on $\dom (f)$ with $p$ and $q$ coprime in $\R[x,y]$, and
$q$ doesn't vanish on $\dom (f)$. By Theorem
\ref{equivfctregapres1eclat}, we know that $q_m$ is definite. On $\dom (f)$,
the rational function $\partial_v f=\dfrac{(\partial_v p)q-(\partial_v
  q)p}{q^2}$ is regular and we apply lemma \ref{productdefinitepolynomials}
  to conclude the proof.
\end{proof}


We show that we can answer by the affirmative to Question \ref{ques}.(1) in the
case of regular functions after one blowing-up.
We first state the elementary 

\begin{lem}\label{orderderivative}
Let $a\in \R^2$ and $f\in\R(x,y)$ be a rational function which admits a derivative at $a$ in the direction $v\in\R^2$. Then,
$$\ord_{a}\partial_vf\geq \ord_{a}f-1.$$
\end{lem}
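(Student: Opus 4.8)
The statement is local at $a$, so without loss of generality I would take $a$ to be the origin $o$ and work with the homogeneous decomposition of the numerator and denominator. Write $f = p/q$ on $\dom(f)$ with $p, q \in \R[x,y]$, and let $p = \sum_{i \geq n} p_i$, $q = \sum_{j \geq m} q_j$ be the homogeneous decompositions at $o$, so that $n = \ord_o(p)$, $m = \ord_o(q)$, and $\ord_o(f) = n - m$. After a linear change of coordinates (which does not affect orders, and which one can choose so that the direction $v$ becomes, say, the $x$-axis), it suffices to treat $v = e_1$ and bound $\ord_o(\partial_x f)$ from below by $n - m - 1$.

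The key computation is the standard quotient rule: on $\dom(f)$,
$$\partial_x f = \frac{(\partial_x p)\,q - (\partial_x q)\,p}{q^2}.$$
The denominator $q^2$ has order $2m$ at $o$. For the numerator, $\partial_x p$ has order at least $n-1$ and $\partial_x q$ has order at least $m-1$ at $o$, so each of the two products $(\partial_x p)q$ and $(\partial_x q)p$ has order at least $n + m - 1$ at $o$; hence their difference has order at least $n + m - 1$. Therefore $\ord_o(\partial_x f) \geq (n+m-1) - 2m = n - m - 1 = \ord_o(f) - 1$, which is exactly the claim. The same argument applies verbatim to any direction $v$ once we change coordinates, giving $\ord_a \partial_v f \geq \ord_a f - 1$ in general.

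The only subtlety — and the one point deserving care — is that the order $\ord_o(\partial_x f)$ of the rational function $\partial_x f$ should be computed from its representation as a quotient of polynomials in the sense of Definition \ref{definitionordre}, i.e. as $\ord_o(\text{numerator}) - \ord_o(\text{denominator})$; this is well-defined and independent of the chosen representation (possible cancellations only increase the numerator's order relative to the denominator's, never decrease the difference), so the lower bound obtained from the particular representation $\bigl((\partial_x p)q - (\partial_x q)p\bigr)/q^2$ is a valid lower bound for $\ord_o(\partial_x f)$. There is no genuine obstacle here; the lemma is an elementary order count, which is why the authors flag it as ``elementary.''
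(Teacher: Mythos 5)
Your argument is correct and is exactly the intended elementary order count: the paper in fact leaves the proof of this lemma empty (the authors only flag it as ``elementary''), and the quotient-rule computation $\partial_v f = \bigl((\partial_v p)q-(\partial_v q)p\bigr)/q^2$ together with $\ord_a(\partial_v p)\geq \ord_a(p)-1$ and $\ord_a(\partial_v q)\geq \ord_a(q)-1$ is precisely what is needed. Your remark that $\ord_a$ of a rational function is well defined independently of the chosen representation (so the bound from this particular representation suffices) correctly disposes of the only point requiring care.
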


\begin{proof}

\end{proof}

\begin{prop}
\label{locbound}
Let $f\in\SR^{0}_{[1]}(\R^2)$. The partial derivative
functions of $f$ are locally bounded at the points of $\pol(f)$.
\end{prop}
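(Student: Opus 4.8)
The plan is to work locally at a point $a \in \pol(f)$, which we may take to be the origin $o$ by a change of coordinates, and to write $f = p/q$ on $\dom(f)$ with $p, q$ coprime in $\R[x,y]$ and $q$ positive on $\R^2$. By Theorem \ref{equivfctregapres1eclat}, the hypothesis $f \in \SR^0_{[1]}(\R^2)$ tells us that $q$ is locally positive definite at $o$; that is, writing $q = q_m + \tilde q$ with $q_m$ the lowest-degree homogeneous component ($m = \ord_o q$, even) and $\deg \tilde q > m$, the homogeneous polynomial $q_m$ is positive definite. Set $n = \ord_o p$; by Lemma \ref{lem-polaire}.\ref{ordreindet} we have $n \geq m$. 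The key observation is that the same structural information applies to $\partial_v f$ for any direction $v \in \R^2$: indeed $\partial_v f = \frac{(\partial_v p) q - (\partial_v q) p}{q^2}$ on $\dom(f)$, and $q^2$ is again locally positive definite at $o$ (its lowest homogeneous component is $q_m^2$, positive definite). After clearing the obvious common factor, $\partial_v f$ is expressed as a rational function whose denominator remains locally positive definite at $o$, so the boundedness question reduces to an order count on the numerator.

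The main step is then to compare orders. The numerator of $\partial_v f$ (before simplification) is $(\partial_v p) q - (\partial_v q) p$; each term has order at least $(n-1) + m$ at $o$ by Lemma \ref{orderderivative} (applied to $p$ and $q$), so the whole numerator has order at least $n + m - 1$, while the denominator $q^2$ has order exactly $2m$. Writing $\partial_v f$ in polar coordinates $(\rho,\theta)$ as in Lemma \ref{lem-polaire}, the factor $\rho$ appears to the power at least $(n+m-1) - 2m = n - m - 1$, and the remaining angular factor is a quotient whose denominator $q_m(\cos\theta,\sin\theta)^2 + O(\rho)$ is bounded below near $\rho = 0$ because $q_m$ is definite (this is exactly point 2(a) of Lemma \ref{lem-polaire}, applied to $\partial_v f$). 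Hence $\partial_v f$ is locally bounded at $o$ provided $n - m - 1 \geq 0$, i.e. $n \geq m + 1$.

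The case that needs care is $n = m$, i.e. $f(o) \neq 0$. But then we simply replace $f$ by $f - f(o)$, which has the same denominator $q$ (still coprime with the new numerator, still positive definite at $o$) and the same partial derivatives $\partial_v f = \partial_v(f - f(o))$; for $f - f(o)$ the numerator order is strictly bigger than $m$ by Lemma \ref{lem-polaire}.\ref{ordreindet}, so the previous paragraph applies. Therefore in all cases $\partial_v f$ is locally bounded at each point of $\pol(f)$; since $\pol(f)$ is finite, this completes the proof. The one technical point to be careful about is that simplifying the common factor in the expression for $\partial_v f$ does not destroy the local positive-definiteness of the denominator — but this follows from Lemma \ref{productdefinitepolynomials}, since any divisor of a (locally) definite polynomial is again (locally) definite.
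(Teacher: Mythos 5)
Your proposal is correct and follows essentially the same route as the paper's proof: reduce to the origin, replace $f$ by $f-f(o)$ so that $n=\ord_o p>m=\ord_o q$, invoke Theorem \ref{equivfctregapres1eclat} for the definiteness of $q_m$, and then bound $\partial_v f=\frac{(\partial_v p)q-(\partial_v q)p}{q^2}$ by the order count $n+m-1\geq 2m$ together with Lemma \ref{orderderivative} and Lemma \ref{lem-polaire}.\ref{lemhom}. The only cosmetic difference is that you perform the normalisation $f(o)=0$ at the end rather than at the outset, and your worry about simplifying a common factor is unnecessary since the unsimplified denominator $q^2$ already has definite lowest homogeneous component $q_m^2$.
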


\begin{proof} We are going to apply Lemma \ref{lem-polaire}.\ref{lemhom} Let $v\in \R^2$. It is sufficient to work locally at one indetermination point, which we suppose to be the origin. Moreover we may assume $f(o)=0$ 
since $\partial_v
f=\partial (f-f(o))$. 
Then
$f=\dfrac{p}{q}$ on $\dom (f)$ with $p$ and $q$ coprime in $\R[x,y]$, and
$q$ doesn't vanish on $\dom (f)$. We can write 
$$p=p_n+p_{n+1}+\cdots$$
$$q=q_m+q_{m+1}+\cdots$$
with $p_i, q_i$ homogeneous polynomials of degree $i$ in $\R[x,y]$ and 
$n=\ord_o(p)$, $m=\ord_0(q)$. By Lemma \ref{lem-polaire}.\ref{ordreindet} and Theorem
\ref{equivfctregapres1eclat}, $n>m$ and $\Z(q_m)=\{o\}$. On $\dom(f)$, we
have $$\partial_v f=\dfrac{(\partial_v p)q-(\partial_v
  q)p}{q^2},$$
and therefore $\partial_v f$ is locally bounded at $o$ by Lemma \ref{lem-polaire}.\ref{lemhom},
 Lemma \ref{orderderivative} and Lemma \ref{productdefinitepolynomials}.
\end{proof}



We show that we can answer by the affirmative to Question \ref{ques}.(2) in the
case of regular functions after one blowing-up.
\begin{thm}
\label{question2}
Let $k$ be an integer and let $f\in\SR^{k}_{[1]}(\R^2)$.  Let $h\in
\SR^{k+1}_{[1]}(\R^2)$ such that $\pol (f)\subseteq \Z(h)$, then
$$hf\in \SR^{k+1}_{[1]}(\R^2).$$
\end{thm}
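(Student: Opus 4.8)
The plan is to reduce to a local statement at the finitely many points of $\pol(hf)\subseteq \pol(f)$, and then to combine two ingredients already at hand: the characterization of $\SR^0_{[1]}(\R^2)$ via local positive definiteness of denominators (Theorem \ref{equivfctregapres1eclat}), and the $C^k$ criterion through the top-order partial derivatives (Theorem \ref{thm-top}, together with Proposition \ref{equivalence3}). First I would settle membership in $\SR^{0}_{[1]}(\R^2)$: write $f=p/q$ and $h=r/s$ with coprime numerators and denominators, $q,s$ positive on $\R^2$; then $hf=pr/(qs)$, and after cancelling common factors the denominator divides $qs$. At a point $a\in\pol(hf)$ we have $a\in\pol(f)$, so by Theorem \ref{equivfctregapres1eclat} the lowest-degree homogeneous component $q_m$ of $q$ at $a$ is positive definite; the lowest component $s_{m'}$ of $s$ at $a$ is in any case either a unit or definite, but since $s$ has constant sign and $a\in\pol(h)$ forces $s_{m'}$ definite as well (again via Theorem \ref{equivfctregapres1eclat}, or: if $a\notin\pol(h)$ then $s(a)\neq0$ and there is nothing to worry about at $a$). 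By Lemma \ref{productdefinitepolynomials} the product of definite homogeneous polynomials is definite, and any divisor of a definite homogeneous polynomial is definite, so the lowest-degree homogeneous component of the reduced denominator of $hf$ at $a$ is positive definite. Hence $hf\in\SR^0_{[1]}(\R^2)$ by Theorem \ref{equivfctregapres1eclat}.

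Next I would upgrade the regularity from $C^0$ to $C^{k+1}$. Since $f\in\SR^k$ and $h\in\SR^{k+1}$, Leibniz's rule shows that on $\dom(f)\cap\dom(h)=\dom(hf)$ every partial derivative of $hf$ of order $\leq k$ is a finite sum of products $(\partial^{i}_{\ast}h)(\partial^{j}_{\ast}f)$ with $i+j\leq k$, hence $i\leq k$ and $j\leq k$; each such factor extends continuously to $\R^2$ (the $h$-factors because $h\in\SR^{k}\subseteq C^k$, the $f$-factors because $f\in\SR^k$), so the product extends continuously. This already gives $hf\in\SR^k(\R^2)$ (and we knew $hf$ is regular after one blowing-up), but to reach order $k+1$ we must use the hypothesis $\pol(f)\subseteq\Z(h)$. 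For a partial derivative of order $k+1$, Leibniz gives a sum of $(\partial^{i}_{\ast}h)(\partial^{j}_{\ast}f)$ with $i+j=k+1$; the only dangerous terms are those with $j=k+1$, i.e. $h\cdot\partial^{k+1}_{\ast}f$, since $\partial^{k+1}_{\ast}f$ need not extend continuously across $\pol(f)$. Here is where the flatness built into the hypothesis enters: because $\pol(f)\subseteq\Z(h)$ and $h\in\SR^{k+1}_{[1]}(\R^2)$, the function $h$ vanishes to sufficiently high order along $\pol(f)$ — indeed, locally at $a\in\pol(f)$, Lemma \ref{lem-polaire}.\ref{lemhom} and the local positive definiteness of the denominator of $f$ bound the blow-up rate of $\partial^{k+1}_{\ast}f$ by $\rho^{n-m-(k+1)}$ in polar coordinates, while $h=O(\rho^{\ord_a h})$, so the product $h\cdot\partial^{k+1}_{\ast}f$ tends to $0$ provided $\ord_a h$ is large enough compared to $k+1-(n-m)$.

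The main obstacle, and the heart of the argument, is therefore the quantitative estimate at the bad points: one must check that $\ord_a(h)$ is automatically large enough to kill the worst term $h\cdot\partial^{k+1}_{\ast}f$ near $a$. I would argue as follows at $a=o$ (a general point being identical): writing $f=p/q$ with $q_m$ positive definite and $n=\ord_o p>m=\ord_o q$, Lemma \ref{lem-polaire}.\ref{lemhom} gives that $q\sim_o q_m$, so $q^{k+2}\sim_o q_m^{k+2}$ has order $(k+2)m$ with definite leading form, while the numerator of $\partial^{k+1}_{\ast}(p/q)$ has order $\geq n+(k+1)m-(k+1)$ by repeated application of Lemma \ref{orderderivative} and Lemma \ref{productdefinitepolynomials}; hence, by Lemma \ref{lem-polaire}.\ref{lemhom} again, $\partial^{k+1}_{\ast}f$ is $O(\rho^{\,n-m-(k+1)})$ near $o$. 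On the other hand, $\pol(f)\subseteq\Z(h)$ means $o\in\Z(h)$, so $\ord_o(h)\geq 1$; but in fact, since $h\in\SR^{k+1}$ is $C^{k+1}$ and we need $\ord_o(h)\geq (k+1)-(n-m)$ with $n-m\geq 1$, we need $\ord_o(h)\geq k$. This last inequality is the one genuinely delicate point — it should follow because the numerator $r$ of $h$ satisfies $\Z(s)\subseteq\Z(r)$ and $\pol(f)\subseteq\Z(h)=\Z(r)$ together with $C^{k+1}$ regularity of $h$ forcing a flatness condition at $o$; more precisely one invokes that a $k$-regulous function vanishing at an indeterminacy point of another $[1]$-regulous function vanishes there to high order, which can be extracted from the local blow-up analysis above. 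Once $h\cdot\partial^{k+1}_{\ast}f\to 0$ at every $a\in\pol(f)$, all order-$(k+1)$ partial derivatives of $hf$ extend continuously to $\R^2$, so $hf\in C^{k+1}(\R^2)$ by Theorem \ref{thm-top}; being also regular after one blowing-up and rational, $hf\in\SR^{k+1}_{[1]}(\R^2)$, completing the proof.
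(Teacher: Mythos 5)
Your reduction is sound up to the last step, but the proof does not close: the inequality $\ord_a(h)\geq k$ that you need to control the term $h\cdot\partial^{k+1}_{\ast}f$ is simply false in general, and the justification you sketch for it (``a $k$-regulous function vanishing at an indeterminacy point of another $[1]$-regulous function vanishes there to high order'') has no basis. Take $h(x,y)=x$, which lies in $\SR^{k+2}_{[1]}(\R^2)$ for every $k$ and satisfies $\pol(f)\subseteq\Z(h)$ for any $f$ with $\pol(f)=\{o\}$; then $\ord_o(h)=1$, while your argument demands $\ord_o(h)\geq k$. The hypothesis of the theorem only gives $h(a)=0$, i.e.\ $\ord_a(h)\geq 1$, and no flatness of $h$ along $\pol(f)$ can be extracted from it. (A smaller issue: your assertion $n>m$ presupposes $f(a)=0$, which must first be arranged by replacing $f$ with $f-f(a)$ and absorbing $f(a)h\in\SR^{k+1}_{[1]}(\R^2)$.)

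The underlying problem is that you only use the estimate $n-m\geq 1$, which is all that $f\in\SR^{0}_{[1]}$ gives, and then try to make $h$ pay for the remaining $k$ orders of blow-up of $\partial^{k+1}_{\ast}f$. The correct accounting puts the burden on $f$: since $f\in\SR^{k}_{[1]}(\R^2)$, Theorem \ref{question3} (or rather the inequality $n\geq m+k+1$ established in its proof, after subtracting the degree-$k$ Taylor polynomial of $f$ at $a$) shows that $\partial^{k+1}_{\ast}f$ is in fact \emph{locally bounded} at $a$, so that $h(a)=0$ alone forces $h\cdot\partial^{k+1}_{\ast}f\to 0$; with that estimate your Leibniz argument goes through. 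The paper sidesteps the quantitative issue entirely by inducting on $k$: it writes $\partial_v(hf)=f\partial_v h+h\partial_v f$, observes that $f\partial_v h\in\SR^{k+1}(\R^2)$, and applies the induction hypothesis to the pair $(\partial_v f,h)$, which is legitimate because $\partial_v f\in\SR^{k}_{[1]}(\R^2)$ (Proposition \ref{derpart1reg}) and $\pol(\partial_v f)\subseteq\pol(f)\subseteq\Z(h)$; the base case $k=0$ uses only Proposition \ref{locbound}. Either repair works, but as written your proposal rests on a false lemma at its critical step.
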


\begin{proof} By stability under product, the function $hf$ belongs to $\SR^{0}_{[1]}(\R^2)$. As a consequence, it is sufficient to prove
  that $hf\in\SR^{k+1}(\R^2)$.\\
We proceed by induction on $k$. Since being $C^k$ is
a local property, we assume in the following that $\pol (f)=\{ o\}$ and $h(o)=0$.

For the case $k=0$, note that on $\R^2\setminus \{ o\}$, we have $\partial_v (hf)=f\partial_v
h+h\partial_v f$ for $v\in \R^2$. The product $f\partial_v h$ belongs to $\SR^{0}(\R^2)$ since
$f\in\SR^{0}(\R^2)$ and $h\in\SR^1 (\R^2)$. By Proposition
\ref{locbound}, $\partial_v f$ is locally bounded at $o$, hence
$h\partial_v f$ can be extended continuously by $0$ at $o$. As a consequence we get $hf\in\SR^{1}(\R^2)$ by Proposition
\ref{equivalence3}.

Concerning heredity, let $f\in\SR^{k+1}(\R^2)$ and $h\in
\SR^{k+2}(\R^2)$ such that $\{o\}=\pol (f)\subseteq \Z(h)$. We have to show that any partial derivative of $hf$ is of class $C^{k+1}$. On $\R^2\setminus \{ o\}$, we have $\partial_v (hf)=f\partial_v
h+h\partial_v f$ for $v\in \R^2$. Since $\partial_v h\in\SR^{k+1}(\R^2)$, we only have to prove that $h\partial_v f$ belongs to $\SR^{k+1}(\R^2)$. This is true by the induction hypothesis since
$\pol (\partial_v f)\subseteq \pol (f)$.
\end{proof}


We show that we can answer by the affirmative to Question \ref{ques}.(3) in the
case of regular functions after one blowing-up.
\begin{thm}
\label{question3}
Let $k$ be an integer and $f\in\SR^{0}_{[1]}(\R^2)$. Then $f\in
\SR^{k}_{[1]}(\R^2)$ if and only if $f$ has a polynomial
  expansion of
  order $k$ at any point of $\R^2$.
\end{thm}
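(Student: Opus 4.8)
The plan is to prove both implications locally at the (finitely many) points of $\pol(f)$, since being $[1]$-regulous, being $k$-regulous, and having a polynomial expansion of order $k$ are all local conditions and all are automatic away from $\pol(f)$ (where $f$ is regular). Fix a point which we may take to be the origin $o$, and after replacing $f$ by $f-f(o)$ assume $f(o)=0$. Write $f=p/q$ on $\dom(f)$ with $p,q$ coprime, $q$ positive on $\R^2$, and let $p_n,q_m$ be the lowest-degree homogeneous parts, so $n=\ord_o(p)$, $m=\ord_o(q)$. By Theorem \ref{equivfctregapres1eclat} the hypothesis $f\in\SR^0_{[1]}(\R^2)$ is equivalent to $q_m$ being positive definite, and by Lemma \ref{lem-polaire}.\ref{ordreindet} we have $n>m$, with $m$ even.

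The forward implication ($f\in\SR^k_{[1]}(\R^2)\Rightarrow$ polynomial expansion of order $k$) should follow from Theorem \ref{thm-top} (equivalently Taylor's theorem): a $C^k$ function automatically has a polynomial expansion of order $k$ at every point, with $P_k$ the degree-$\leq k$ Taylor polynomial. The substance is in the converse. Assume $f$ admits a polynomial expansion of order $k$ at $o$, say $f(h)=P_k(h)+o(\|h\|^k)$ with $P_k$ of degree $\leq k$. The idea is to bootstrap this into $C^k$-regularity. First subtract the polynomial $P_k$: the function $g:=f-P_k$ is again in $\SR^0_{[1]}(\R^2)$ (same denominator $q$, hence same positive definite $q_m$), satisfies $g=o(\|h\|^k)$ near $o$, and it suffices to show $g\in\SR^k_{[1]}(\R^2)$. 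Writing $g=p'/q$ with $p'=p-P_kq$, the estimate $g=o(\|h\|^k)$ combined with $q\sim_o q_m$ (Lemma \ref{lem-polaire}.\ref{lemhom}) forces $\ord_o(p')\geq m+k+1$, i.e. the lowest homogeneous part $p'_{n'}$ of $p'$ has $n'\geq m+k+1$. The plan is then to show, by induction on $k$ using Proposition \ref{derpart1reg} and Proposition \ref{locbound}, that whenever $g=p'/q\in\SR^0_{[1]}(\R^2)$ with $\ord_o(p')-\ord_o(q)\geq k+1$, the function $g$ is $k$-regulous: the partial derivatives $\partial_v g=\bigl((\partial_v p')q-(\partial_v q)p'\bigr)/q^2$ lie in $\SR^0_{[1]}(\R^2)$ by Proposition \ref{derpart1reg}, their numerator has order $\geq (m+k+1-1)+m=2m+k$ while the denominator $q^2$ has order $2m$ with $(q^2)_{2m}=q_m^2$ still positive definite, so the order gap drops by exactly $1$ to $k$; by induction each $\partial_v g$ is $(k-1)$-regulous, and Proposition \ref{equivalence3} (or Corollary \ref{equivalence1}) upgrades this to $g\in\SR^k_{[1]}(\R^2)$. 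Adding back $P_k$ gives $f\in\SR^k_{[1]}(\R^2)$.

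The base case of the induction ($k=0$, or rather the step from "order gap $\geq 1$" to "continuous with value $0$ at $o$") is immediate from Lemma \ref{lem-polaire}.\ref{lemhom} since $n'>m$ gives $g=o(1)$ and in fact $g\to 0$. The main obstacle I anticipate is making precise the order-counting for the numerator of $\partial_v g$ and, more delicately, checking that the polynomial expansion hypothesis genuinely yields the order bound $\ord_o(p-P_kq)\geq m+k+1$ rather than merely something weaker: here one must use that $q$ is equivalent to its positive definite part $q_m$ at the origin (so that dividing by $q$ does not disturb the order estimate), which is exactly where the $[1]$-regulous hypothesis is essential and where the analogous statement fails for $\SR^k_{[l]}(\R^2)$ with $l\geq 2$. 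A secondary technical point is ensuring that after subtracting $P_k$ and after each differentiation the numerator and denominator can be taken coprime (or at least that Lemma \ref{productdefinitepolynomials} applies to keep the relevant lowest homogeneous part definite), so that Theorem \ref{equivfctregapres1eclat} keeps applying along the induction.
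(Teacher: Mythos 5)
Your proposal is correct and follows essentially the same route as the paper: reduce to the origin, subtract the order-$k$ polynomial expansion, use the positive definiteness of $q_m$ (via Theorem \ref{equivfctregapres1eclat} and Lemma \ref{lem-polaire}) to extract the key estimate $\ord_o(p')\geq m+k+1$, and then order-count the derivatives to get continuity of the $k$-th partials and conclude by Theorem \ref{thm-top}/Proposition \ref{equivalence3}; the only organizational difference is that you iterate first derivatives inductively where the paper bounds the $k$-th derivative $r/q^{2^k}$ in one step via Lemma \ref{orderderivative}. One cosmetic point: Proposition \ref{derpart1reg} cannot be invoked to put $\partial_v g$ in $\SR^0_{[1]}(\R^2)$ when $g$ is only known to be $0$-regulous, but this is harmless since your own order comparison together with the definiteness of $q_m^2$ (Lemma \ref{lem-polaire} and Lemma \ref{productdefinitepolynomials}, then Theorem \ref{equivfctregapres1eclat}) already yields exactly that membership.
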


\begin{proof} 
The direct implication is given by Taylor expansion.
For the converse implication, we proceed by induction on $k$. For $k=0$
the proof is trivial.

Assume $f$ has a polynomial
  expansion of
  order $k>0$ at any point of $\R^2$ with $k\geq 1$. By the induction
  hypothesis, the function $f$ belongs to $\SR^{k-1}_{[1]}(\R^2)$, namely for all $j$  in $\{1,\ldots,k-1\}$, the
  $j$-th partial derivative functions of $f$ can be extended continuously at $o$.
Since $f$ is already regular after a one stage blowing-up, we only have to prove that $f$ is of class $C^k$. We may also assume $\pol (f)=\{o\}$ and
$f=o(\sqrt{x^2+y^2}^k)$ at $o$ (by substracting to $f$ its polynomial
approximation of degree $k$ at $o$). Then
$f=\dfrac{p}{q}$ on $\dom (f)$ with $p$ and $q$ coprime in $\R[x,y]$, and
$q$ doesn't vanish on $\dom (f)$. We can write 
$$p=p_n+p_{n+1}+\cdots$$
$$q=q_m+q_{m+1}+\cdots$$
with $p_i, q_i$ homogeneous polynomials of degree $i$ in $\R[x,y]$ and 
$n=\ord_o(p)$, $m=\ord_0(q)$. We are going to prove that $n\geq m+k+1.$ Actually, by Lemma \ref{lem-polaire}.\ref{ordreindet} and Theorem
\ref{equivfctregapres1eclat}, we have already $n>m$ and $q_m$ is definite.
In polar coordinates, we get 
$$f(\rho\cos\theta,\rho\sin
\theta)=\rho^{n-m}\dfrac{p_n(\cos\theta,\sin\theta)+\rho\,
  \tilde{p}(\rho\cos\theta,\rho\sin 
\theta)}{q_m(\cos\theta,\sin\theta)+\rho\, \tilde{q}(\rho\cos\theta,\rho\sin
\theta)}$$
with $\tilde{p},\tilde{q}$ some polynomials in two variables. By hypothesis, 
$$\dfrac{f(\rho\cos\theta,\rho\sin
\theta)}{\rho^k}=\rho^{n-m-k}\dfrac{p_n(\cos\theta,\sin\theta)+\rho\,
  \tilde{p}(\rho\cos\theta,\rho\sin 
\theta)}{q_m(\cos\theta,\sin\theta)+\rho\, \tilde{q}(\rho\cos\theta,\rho\sin
\theta)}$$ is a $o(1)$ at $(0,0)$ and thus $$n\geq m+k+1.$$
Let $\partial^k_{v_1\cdots v_k} f$ be the partial derivative of order $k$ of
$f$ along the vectors $v_1,\ldots,v_k\in \R^2$. Then $\partial^k_{v_1\cdots v_k} f$ is defined on $\dom (f)$, and we want to prove that it admits a continuous extension to $\R^2$. Then $\partial^k_{v_1\cdots v_k} f$ has the form $\partial^k_{v_1\cdots v_k} f=\dfrac{r}{q^{2^k}}$, with
some $r\in\R[x,y]$. By Lemma \ref{orderderivative}, the order of $r$ is greater than or equal to $n-m+2^km-k$.

Combining this inequality with $n\geq m+k+1$, we obtain $\ord_o r>2^km$.
By Lemma \ref{lem-polaire}.\ref{lemhom}, the rational function $\partial^k_{v_1\cdots v_k} f$ can be
extended continuously by  $0$ at $o$, which is sufficient to prove that $f$ is $C^k$ by Theorem \ref{thm-top}.
\end{proof}

We give an application of Theorem \ref{question3} to the study of flatness of regulous functions. In particular we are able to give an explicit bound in Proposition \ref{kflat2} in the case of regulous functions regular after one stage of blowings-up.

\begin{cor}\label{kflat[1]} Let $k\in\N^*$ and $f\in\SR^{0}_{[1]}(\R^2)$. For $m\geq 2k$, the function $f^m$ is $k$-flat.
\end{cor}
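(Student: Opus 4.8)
The plan is to prove $k$-flatness point by point. Since $f\in\SR^0_{[1]}(\R^2)$ its indeterminacy locus $\pol(f)$ is a finite set, and $\Z(f^m)=\Z(f)$ (the two functions have the same domain, and the continuous extension of $f^m$ is the $m$-th power of that of $f$); so it suffices to show that $f^m$ is $k$-flat at each $a\in\Z(f)$. If $a\notin\pol(f)$ this is immediate: there $f$ is regular, hence $C^\infty$, near $a$ with $f(a)=0$, so $\ord_a f\geq1$ and $\ord_a f^m=m\,\ord_a f\geq m\geq 2k\geq k+1$, whence $f^m$ is $C^\infty$ near $a$ with all partial derivatives of order $\leq k$ vanishing at $a$.

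The substantive case is $a\in\pol(f)$, and I may take $a=o$, so that $f(o)=0$. Write $f=p/q$ with $p,q$ coprime in $\R[x,y]$. By Theorem \ref{equivfctregapres1eclat} the lowest-degree homogeneous component $q_{m'}$ of $q$ at $o$ (with $m'=\ord_o q$) is (positive) definite, and by Lemma \ref{lem-polaire}.\ref{ordreindet}, $n:=\ord_o p>m'$. For $j\in\{1,\dots,k\}$ and $v_1,\dots,v_j\in\R^2$, the rational function $\partial^j_{v_1\cdots v_j}(f^m)$, placed over the common denominator $q^{2^j m}$, equals $r/q^{2^j m}$ for some $r\in\R[x,y]$, and by Lemma \ref{productdefinitepolynomials} the lowest-degree homogeneous component $q_{m'}^{\,2^j m}$ of that denominator is definite. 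Iterating Lemma \ref{orderderivative} gives
$$\ord_o\bigl(\partial^j_{v_1\cdots v_j}(f^m)\bigr)\;\geq\;\ord_o(f^m)-j\;=\;m(n-m')-j\;\geq\;m-k\;\geq\;2k-k\;=\;k\;\geq\;1,$$
using $n-m'\geq1$ and $m\geq 2k$, so $\ord_o r>\ord_o\!\bigl(q^{2^j m}\bigr)$, and Lemma \ref{lem-polaire}.\ref{lemhom} then forces $r\ll_o q^{2^j m}$, i.e. $\partial^j_{v_1\cdots v_j}(f^m)$ extends continuously by $0$ at $o$.

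Finally I would observe that, in particular, all $k$-th partial derivatives of $f^m$ extend continuously (by $0$) across the finitely many, isolated poles of $f$ lying in $\Z(f)$, and conclude that $f^m$ is of class $C^k$ on a neighbourhood of $o$ by running the proofs of Lemma \ref{lem-top} and Theorem \ref{thm-top} locally at $o$, exactly as in the proof of Proposition \ref{kflat2}; since moreover $f^m(o)=0$ and the continuous extensions of the $j$-th partials ($1\leq j\leq k$) vanish at $o$, the function $f^m$ is $k$-flat at $o$. I expect the genuinely delicate point to be precisely this localization: Theorem \ref{thm-top} is stated globally, and $f^m$ need not be globally $C^k$ (it can fail to be so at a pole of $f$ where $f$ does not vanish), so the theorem cannot be quoted verbatim and one must use that its proof, via Lemma \ref{lem-top}, is carried out point by point on $\pol(f)$. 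Everything else is a routine order count, which in fact shows that the weaker bound $m\geq k+1$ already suffices, $m\geq 2k$ being merely a convenient choice.
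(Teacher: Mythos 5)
Your proof is correct, and it takes a genuinely different route from the paper's. The paper deduces the corollary from results it has already established: it first notes (via Proposition \ref{locbound} and the $k=1$ case of Proposition \ref{kflat2}) that $f^2$ is $1$-flat, hence has order $>1$ at every point of $\Z(f)$; then $(f^2)^k$ has the zero polynomial expansion of order $k$ there, and Theorem \ref{question3} converts this expansion into $C^k$-regularity and $k$-flatness. You instead inline the order count that underlies the proof of Theorem \ref{question3}: using Theorem \ref{equivfctregapres1eclat} to get definiteness of $q_{m'}$, Lemma \ref{orderderivative} iterated to bound $\ord_o\partial^j(f^m)$ from below, and Lemma \ref{lem-polaire} together with Lemma \ref{productdefinitepolynomials} to turn the strict order inequality into continuous extension by $0$. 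Both arguments must use Theorem \ref{thm-top} (respectively Theorem \ref{question3}) only \emph{locally} at points of $\Z(f)$, since $f^m$ need not be $C^k$ at poles where $f$ does not vanish; the paper glosses over this, and you are right to flag it — it is the same localization the paper already relies on in Proposition \ref{kflat2}. What your approach buys is a sharper bound: the computation $m(n-m')-j\geq m-k\geq 1$ only needs $m\geq k+1$, whereas the paper's factorization $f^{2k}=(f^2)^k$ is structurally tied to $m\geq 2k$. What it costs is self-containedness of exposition: the paper's version is a two-line corollary of Theorem \ref{question3}, while yours re-proves the relevant estimate from scratch.
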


\begin{proof} Following the proof of Proposition \ref{kflat2} in the case $k=1$, we see that $f^m$ is of class $C^1$ on $\Z(f)$ and $1$-flat for any $m\geq 2$, because the first partial derivatives of $f$ are locally bounded by Proposition \ref{locbound}.

Since higher derivatives are not necessarily locally bounded, we
cannot obtain such a bound for $k>1$ by following the proof of
Proposition \ref{kflat2}. Note that, by successive application of
Proposition \ref{kflat2}, the function $f^{2^k}$ is $k$-flat.
However, using Theorem \ref{thm-top}, we have that $f^2$ admits at the
order one the zero polynomial expansion  at any point of $\mathcal
Z(f)$, namely the order of $f^2$ at any point of $\Z(f)$ is greater
than one. By product, the function $(f^{2})^k$ admits also the zero
polynomial expansion at any point of $\mathcal Z(f)$, but this time at the order $k$. In particular, by Theorem \ref{question3} $f^m$ is of class $C^k$ at any point of $\mathcal Z(f)$ for $m\geq 2k$, and is $k$-flat.
\end{proof}

\section{Comparison of topologies}
\label{topo}


For an integer $k$, the $k$-regulous topology of $\R^n$ is defined to be the topology whose closed subsets are generated by the zero sets of regulous functions in $\SR^k (\R^n)$. 
Although the $k'$-regulous topology is a priori finer than the $k$-regulous topology when $k'<k$, it has been proved in   \cite{FHMM} that in fact they are the same.
Hence, it is not necessary to specify the integer $k$ to define the regulous topology on $\R^n$.
It raises naturally the question to know whether it is the same for the topology generated by regulous functions in  
$\SR^k_{[l]} (\R^2)$ for some integers $k,l$. \par
On $\R^2$, we define the $k_{[l]}$-regulous
topology (or simply the $[l]$-regulous topology when $k=0$) to be the
topology generated by zero sets of functions in $\SR^{k}_{[l]}(\R^2)$. 
Since the regulous topology is noetherian (\cite{FHMM}), one deduces :
\begin{prop}
The $k_{[l]}$-regulous topology is noetherian. Moreover, for any $k_{[l]}$-regulous closed set $F$, there
exists $f\in\SR^{k}_{[l]}(\R^2)$ such that $\Z (f)=F$. 
\end{prop}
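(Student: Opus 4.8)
The plan is to mimic the classical proof that a noetherian topology generated by zero sets of a ring of functions has the property that every closed set is the zero set of a single function, using as the crucial new input the \emph{radical principality} of the rings $\SR^k_{[l]}(\R^2)$ (announced in the introduction, to be proved in the section on radical principality) and, before that, the fact that the full regulous topology on $\R^2$ is noetherian, proved in \cite{FHMM}.

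First I would establish noetherianity. Since the $k_{[l]}$-regulous closed sets form a subfamily of the regulous closed sets (because $\SR^k_{[l]}(\R^2)\subseteq\SR^k(\R^2)$, and zero sets of elements of the smaller ring are in particular zero sets of elements of the larger one), any descending chain of $k_{[l]}$-regulous closed sets is a descending chain of regulous closed sets, hence stabilizes by the noetherianity of the regulous topology. This is the easy half and requires no calculation.

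Next I would prove the principality statement. Let $F$ be a $k_{[l]}$-regulous closed set. By definition of the topology, $F$ is a finite intersection of finite unions of zero sets $\Z(f_i)$ with $f_i\in\SR^k_{[l]}(\R^2)$; using $\Z(f)\cup\Z(g)=\Z(fg)$ and the fact that $\SR^k_{[l]}(\R^2)$ is a ring (closed under products), every finite union of such zero sets is itself a single zero set $\Z(g)$ for some $g\in\SR^k_{[l]}(\R^2)$. Hence $F=\bigcap_{j=1}^r\Z(g_j)$ is a \emph{finite} intersection of zero sets of elements of $\SR^k_{[l]}(\R^2)$ — finiteness coming from noetherianity, or more directly from the fact that the representation has finitely many terms. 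Now I would invoke radical principality: the real radical (or the relevant radical for the Nullstellensatz established in the paper) of any finitely generated ideal of $\SR^k_{[l]}(\R^2)$ is principal, so the ideal generated by $g_1,\dots,g_r$ has a radical generated by a single element $f\in\SR^k_{[l]}(\R^2)$, and then $\Z(f)=\Z(g_1,\dots,g_r)=\bigcap_j\Z(g_j)=F$. Alternatively, and more elementarily in the spirit of the ``sum of squares'' ideas already in play, one can avoid the full radical-principality machinery for the $l=1$ case: writing each $g_j=p_j/q_j$ with $q_j$ positive on $\R^2$ and (by Theorem \ref{equivfctregapres1eclat}) locally positive definite at the poles, the function $f=\sum_j g_j^2$ lies in $\SR^k_{[1]}(\R^2)$ (ring closure, and the denominator $\prod q_j^2$ is still locally positive definite at every pole, so Theorem \ref{equivfctregapres1eclat} applies) and satisfies $\Z(f)=\bigcap_j\Z(g_j)=F$; for general $l$ one needs the radical principality proved later in the paper.

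The main obstacle is the logical one of making sure the principality used here is genuinely available: the statement as phrased sits in the introduction's list of results and really depends on the radical principality theorem for $\SR^k_{[l]}(\R^2)$, so the clean write-up should either forward-reference that theorem or, for $l=1$, give the self-contained sum-of-squares argument above, checking carefully via Theorem \ref{equivfctregapres1eclat} and Lemma \ref{productdefinitepolynomials} that the constructed $f$ stays in $\SR^k_{[1]}(\R^2)$ (the delicate point being that a sum of squares of functions with locally positive definite denominators again has a locally positive definite denominator, which follows because the common denominator is a product of the individual ones and products of locally positive definite polynomials are locally positive definite). Everything else — noetherianity, the union-to-single-zero-set reduction — is routine.
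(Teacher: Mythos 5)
Your sum-of-squares argument is exactly the paper's proof: reduce $F$ to a finite intersection $\bigcap_j\Z(g_j)$ (finite unions being absorbed into products, arbitrary intersections into finite ones by noetherianity inherited from the regulous topology) and take $f=\sum_j g_j^2$. However, you have needlessly restricted this argument to $l=1$ and, in doing so, introduced a genuine error for general $l$. The function $\sum_j g_j^2$ lies in $\SR^k_{[l]}(\R^2)$ for \emph{every} $l$ simply because $\SR^k_{[l]}(\R^2)$ is a ring, a fact the paper establishes before this proposition (the number of stages needed to regularize a sum or product is bounded by the maximum over the summands/factors); no appeal to Theorem \ref{equivfctregapres1eclat} or to local positive definiteness of the product of denominators is needed, and indeed that theorem is only available for $l=1$ anyway. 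Your fallback for general $l$ --- radical principality of $\SR^k_{[l]}(\R^2)$ --- is not a valid route: the paper proves radical principality only for $\SR^k_{[1]}(\R^2)$ (Theorem \ref{radicprincipdim2}), it is never claimed for $l>1$, and it is established much later, so invoking it here would invert the logical order (that later section itself uses the present proposition through the noetherianity of the topology). So: keep the sum-of-squares argument, drop the $l=1$ restriction and the radical-principality detour, and the proof is complete and coincides with the paper's.
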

\begin{proof}
The noetherianity is given from the fact that the regulous topology is
finer than the $k_{[l]}$-regulous topology. Then, any
$k_{[l]}$-regulous closed subset $F$ can be written 
$F=\Z(f_1)\cap \cdots\cap \Z(f_r)$ for given $f_1,\ldots,f_r$ in
$\SR^k_{[l]}(\R^2)$. It suffices to take $f=f_1^2+\cdots+f_r^2$.
\end{proof}

One may wonder how to compare $k_{[l]}$-regulous topologies when $k$ and $l$ vary. We study this question in the next subsections, respectively when the number $l$ of stages of blowings-up vary with fixed regularity $k$ and next 
when the regularities $k$ vary with a fixed number $l$ of stages of blowings-up.

We begin with the case when the number of stages of blowings-up vary.
It is not so difficult to see that the regulous
topology is strictly finer than the ${[1]}$-regulous topology. \par

Let us consider the real plane curve $V=\Z (y^2-x^4(x-1))$. The curve $V$ has two connected components consisting of a smooth $1$-dimensional branch $F$ and the origin $o$ as an isolated point. The branch $F=V\setminus o$ is a ${[2]}$-closed subset since the function $f$ defined by 
$$f(x,y)=1-\dfrac{x^5}{y^2+x^4}=\dfrac{y^2+x^4-x^5}{y^2+x^4},$$
which belongs to $\SR_{[2]}^0(\R^2)$, satisfies $\mathcal Z(f)=F$.

Let us show that $F$ is not a ${[1]}$-closed subset. By the contrary, assume this is the case, namely that $F=\Z(g)$ with $g\in\SR_{[1]}^0(\R^2)$. 
By the regulous Nullstellensatz \cite{FHMM}, there exist a function
$h\in\SR^0(\R^2)$ and a positive integer $n$ such that $g^n=hf$, so
that $hf$ belongs to $\SR_{[1]}^0(\R^2)$. Since $f$ has a unique pole
at the origin, the function h is therefore regular after a one stage
blowing-up outside the origin by Theorem
\ref{equivfctregapres1eclat}. Let us focus now on the origin. Set
$h=\dfrac{r}{s}$ with $r,s\in \R[x,y]$ coprime. Note that $h(o)\neq 0$
so that the order of $r$ and $s$ coincide by Lemma
\ref{lem-polaire}.(\ref{ordreindet}). We denote it by $m$. By Theorem
\ref{equivfctregapres1eclat} again, the homogeneous part of smallest
degree of the denominator of $hf$ at the origin is definite, therefore
$y^2+x^4$ divides $r$ (cf. Lemma
\ref{productdefinitepolynomials}). Note that $y^2+x^4-x^5$ cannot
divide $s$ because the zero set of $s$ is finite, and therefore the
homogeneous part $s_m$ of $s$ of smallest degree is definite. As a
consequence the homogeneous part $r_m$ of $r$ of smallest degree
satisfies $r_m=h(o)s_m$ by Lemma \ref{lem-polaire}.(\ref{lemhom}). As
a consequence $r_m$ should be definite, in contradiction with the
divisibility of $r$ by $y^2+x^4$ (cf. Lemma
\ref{productdefinitepolynomials}).

\vskip 5mm

One may generalise this example as follows.
Let $F$
be a one-dimensional closed, irreducible regulous subset of $\R^2$. Let $V$ denote the Zariski closure of $F$. By \cite{FHMM}, the Euclidean closure
$\overline{V_{reg}}^{Eucl}$ of the regular part of $V$ coincides with $F$, and $V\setminus F$ consists of a finite number of
points which are isolated in $V$. We describe below the belonging of $F$ to the ${[1]}$-regulous topology in terms of the behaviour of $F$ under the blowing-up along these isolated points.
\begin{prop}
\label{critere} Let $F$
be a closed, irreducible ${[1]}$-regulous subset of $\R^2$. Let $V$ denote
the Zariski closure of $F$. Assume $F$ is not equal to $V$, and choose  $a\in V\setminus F$. Let $\pi_a:M_a\to \R^2$ be the blowing-up of $\R^2$ at
$a$ and denote by $\widetilde{V}_a$ the strict transform of $V$ by
$\pi_a$ and by $E_a\subset M_a$ the exceptional divisor. Then
$$\widetilde{V}_a\cap E_a=\emptyset.$$
\end{prop}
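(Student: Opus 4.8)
The plan is to argue by contradiction: suppose $a \in V \setminus F$ is such that the strict transform $\widetilde V_a$ meets the exceptional divisor $E_a$, and derive that $F$ cannot be $[1]$-regulous closed. Since $F$ is $[1]$-regulous closed and irreducible, by the previous proposition (the statement just before, giving $\Z(f) = F$ for $f \in \SR^0_{[1]}(\R^2)$, together with the noetherianity discussion) we may write $F = \Z(g)$ with $g = \frac{p}{q} \in \SR^0_{[1]}(\R^2)$, $p$ and $q$ coprime, $q$ positive on $\R^2$ and not vanishing on $\dom(g)$. By Theorem \ref{equivfctregapres1eclat}, $q$ is locally positive definite at every point of $\pol(g)$. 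The first step is to locate the interaction: $a$ is an isolated point of $V$, hence (by the description of one-dimensional regulous sets recalled just before the proposition, $F = \overline{V_{\mathrm{reg}}}^{\mathrm{Eucl}}$) $a \notin F = \Z(g)$, so $g(a) \neq 0$.

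Next I would examine what $\widetilde V_a \cap E_a \neq \emptyset$ means. A point of $E_a$ corresponds to a tangent direction at $a$; that $\widetilde V_a$ passes through it says $V$ has a branch through $a$ tangent to that direction, i.e. in suitable affine coordinates centred at $a$ the lowest-degree homogeneous part of a defining polynomial of $V$ at $a$ is not definite — it vanishes along that tangent line. The heart of the argument is then to transfer this non-definiteness to the denominator of $g$ (up to a nonzero factor). Because $\Z(g) = F$ contains the whole branch $\overline{V_{\mathrm{reg}}}^{\mathrm{Eucl}}$ but the Zariski closure of $\Z(g)$ equals $V$, the numerator $p$ of $g$ must vanish on $V$; since $g(a) \neq 0$, by Lemma \ref{lem-polaire}.(\ref{ordreindet}) the orders $\ord_a(p)$ and $\ord_a(q)$ coincide, say both equal $m$, and by Lemma \ref{lem-polaire}.(\ref{lemhom}) the lowest homogeneous parts satisfy $p_m = g(a)\, q_m$ at $a$. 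So $p_m$ and $q_m$ differ by a nonzero scalar and, in particular, $q_m$ is definite iff $p_m$ is. The hard part, which I expect to be the main obstacle, is showing that $q_m$ (equivalently $p_m$) must actually be non-definite given $\widetilde V_a \cap E_a \neq \emptyset$ — one needs to know that the branch of $V$ through $a$ realising the tangent direction in $E_a$ actually forces a real linear factor in $q_m$; this requires that $p$ vanishes to order exactly $m$ at $a$ along that branch and that the branch is a real curve germ, which is where the explicit structure of $V$ near its isolated real point and the coprimality of $p,q$ (via the blowing-up lemma in the proof of Theorem \ref{equivfctregapres1eclat}) enter.

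Once $q_m$ is shown to be non-definite at $a$, we are done: $a \in \pol(g)$ would be needed to invoke Theorem \ref{equivfctregapres1eclat}, but in fact since $q_m$ is non-definite and divides — after the blowing-up at $a$ — the denominator on the exceptional locus, the pull-back $g \circ \pi_a$ would have a pole on $E_a$ in a chart where $q_m(1,v)$ or $q_m(u,1)$ has a real root, exactly as in the forward direction of Theorem \ref{equivfctregapres1eclat}. This contradicts $g \in \SR^0_{[1]}(\R^2)$ unless $a \in \dom(g)$ and $q(a) \neq 0$; but even then the non-definiteness of the lowest form of $q$ at points of $\pol(g)$ near $a$ — obtained because $p$ vanishes on all of $V$ and $F$ accumulates at $a$ — forces the same contradiction via Theorem \ref{equivfctregapres1eclat} and Lemma \ref{productdefinitepolynomials}. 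Hence no such $a$ exists and $\widetilde V_a \cap E_a = \emptyset$ for every $a \in V \setminus F$.
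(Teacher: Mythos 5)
Your proposal is a contradiction argument whose pivotal step is left unproved. You correctly reduce to $F=\Z(g)$ with $g=p/q\in\SR^0_{[1]}(\R^2)$, correctly deduce $V\subseteq\Z(p)$, hence $a\in\pol(g)$, and correctly note that Theorem \ref{equivfctregapres1eclat} together with Lemma \ref{lem-polaire} forces $q_m$ to be definite and $p_m=g(a)q_m$. But the entire content of your strategy lies in the implication ``$\widetilde V_a\cap E_a\neq\emptyset$ implies $p_m$ is not definite,'' and you explicitly flag this as ``the hard part'' without carrying it out. To close it you would need to write $I(V)=(h)$ for an irreducible $h$ (using that $V$ is an irreducible one-dimensional real curve, so $(h)$ is real prime), observe that a real point of $\widetilde V_a\cap E_a$ is exactly a real root of $h_k(1,v)$ or $h_k(u,1)$ (with $h_k$ the lowest form of $h$ at $a$), so $h_k$ is non-definite, and then use $h\mid p$ and Lemma \ref{productdefinitepolynomials} to transfer non-definiteness to $p_m$. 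None of this appears, and your closing paragraph (``This contradicts \dots unless \dots; but even then \dots'') signals that the argument does not actually close as written. So as it stands there is a genuine gap.

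It is worth comparing with the paper's proof, which is direct and avoids tangent cones entirely: after reducing to $\pol(f)=\{a\}$ (shrinking to a Zariski open set), the hypothesis $f\in\SR^0_{[1]}(\R^2)$ makes $f\circ\pi_a$ regular on $M_a$, so $\pi_a^{-1}(F)=\Z(f\circ\pi_a)$ is a one-dimensional Zariski closed subset of the irreducible curve $\widetilde V_a$, hence equals $\widetilde V_a$; since $a\notin F$, this set misses $E_a$, and the conclusion follows. Your approach could be completed along the lines above and would yield the same statement, but the paper's route replaces the whole local analysis of lowest-degree forms by a one-line irreducibility argument.
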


\begin{proof}
We may assume that $V\setminus F=\{a\}$, working on a Zariski open subset of 
$\R^2$ if necessary. There exists $f\in\SR^{0}_{[1]}(\R^2)$ such that $\Z (f)=F$ since $F$ is a ${[1]}$-regulous closed set.
Write $f=\frac{p}{q}$ with $p,q\in\R[x,y]$ coprime and
$q$ doesn't vanish on $\dom (f)$. By Lemma \ref{lemme1}, we have 
$F\subseteq \Z(p)$ and thus $V\subseteq \Z(p)$. Hence $p$ vanishes at
$a$ and thus $q$ also vanishes at $a$ since $f$ doesn't vanish at $a$.
As a consequence $a\in \pol(f)$. By \cite{FHMM}, $\pol (f)$ consists of a finite
number of points. Working on a Zariski open subset of $\R^2$ if
necessary, we may assume that $\pol (f)=\{ a\}$. Since
$f\in\SR^{0}_{[1]}(\R^2)$ and $\pol (f)=\{ a\}$, the composition $f\circ \pi_a$ is regular and therefore
$\pi_a^{-1}(F)$ is a one-dimensional Zariski closed subset of
$\widetilde{V}_a$. Moreover $\widetilde{V}_a$ is irreducible because so is $V$, so necessarily 
$\pi_a^{-1}(F)=\widetilde{V}_a$. The conclusion follows since $a\notin F$. 
\end{proof}

\vskip 1cm 
Generalizing this example, one gets the following. Let again $F$ be a closed, irreducible one-dimensional regulous subset of $\R^2$. The Zariski closure $V$ of $F$ is an irreducible algebraic set, with possibly some isolated points $\{a_1,\ldots,a_n\}=V\setminus F$. Let $\pi_1:M_1\to \R^2$ denote the blowing-up of $\R^2$ along $\{a_1,\ldots,a_n\}$. Then the strict transform $V_1$ of $V$ in $M_1$ is again an irreducible real algebraic set with a one-dimensional part equal to $\pi_1^{-1}(F)$ and possibly some isolated points $\{a^1_1,\ldots,a^1_{n_1}\}$. One may blow-up $M_1$ along $\{a^1_1,\ldots,a^1_{n_1}\}$ to define similarly $\pi_2 :M_2\to M_1$, the strict transform $V_2$ and possibly some isolated points in $V_2\setminus (\pi_1\circ \pi_2)^{-1}(F)$. Continuing the procedure, we define recursively a sequence of blowings-up $\pi_i:M_i\to M_{i-1}$ and the strict transforms $V_i\subset M_i$ of $V$ (with $\pi_i$ the identity map in case $V_{i-1}$ has no isolated point). For an integer $l\in \N$, we denote by $\pi_{[l]}:M_l\to \R^2$ the composition $\pi_1\circ \cdots \circ \pi_l$, called the $[l]$-multiblowing-up of V along its set of isolated points.

\begin{thm}\label{thm-multi}
Let $F$ be a closed, irreducible one-dimensional regulous subset of $\R^2$, with Zariski closure $V$. Assume $F$ is a $[l]$-regulous set. Then the strict transform $V_l$ of $V$ by the $[l]$-multiblowing-up $\pi_{[l]}$ of V along $V\setminus F$ has no isolated points. In particular, $\pi_{[l]}^{-1}(F)=V_l$ is an algebraic set.
\end{thm}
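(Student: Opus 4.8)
The plan is to prove the statement by induction on $l$, removing one blowing-up at a time and using Proposition~\ref{critere} as the engine for the first one. Two preliminary reductions come first. Write $F=\Z(f)$ with $f=p/q\in\SR^0_{[l]}(\R^2)$, $p,q$ coprime (possible since $F$ is a $[l]$-regulous closed set), and put $\{a_1,\dots,a_n\}=V\setminus F$. Exactly as in the proof of Proposition~\ref{critere}, each $a_i$ is a pole of $f$: by Lemma~\ref{lemme1} one has $F\subseteq\Z(p)$, hence $V\subseteq\Z(p)$ and $p(a_i)=0$, and since $f(a_i)\neq 0$ this forces $q(a_i)=0$, i.e.\ $a_i\in\pol(f)$. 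Secondly, since the $a_i$ are isolated real points of $V$, the set $F$ has no real point near them, so at every stage of the multiblowing-up the non-isolated real points of the current strict transform of $V$ form a copy of $F$ lying off the exceptional locus; consequently the isolated points of each $V_j$ are exactly its finitely many points on the (new) exceptional divisor, and ``$V_l$ has no isolated point'' is equivalent to ``$V_l$ is disjoint from the total exceptional divisor of $\pi_{[l]}$'', hence to $\pi_{[l]}^{-1}(F)=V_l$. So it suffices to prove that $V_l$ has no isolated point, a statement local over each $a_i$.

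For the induction, the base cases are $l=0$ (where $\SR^0_{[0]}(\R^2)=\SO(\R^2)$, so $F=\Z(f)$ is Zariski closed and $V=F$) and $l=1$ (Proposition~\ref{critere}). For $l\geq 2$, I would blow up $a_1,\dots,a_n$ by $\pi_1\colon M_1\to\R^2$, with exceptional divisor $E_1$. Unwinding any $l$-stage tower regularizing $f$ and noting that the part of such a tower lying over $a_i$ must start with the blowing-up of $a_i$, one sees that $f_1:=f\circ\pi_1$ becomes regular after at most $l-1$ further blowings-up in a neighbourhood of each exceptional fibre $E_{1,i}$. Now fix an isolated point $b$ of $V_1$; by the first reduction $b$ lies on some $E_{1,i}$, hence in one of the two standard affine charts $C\cong\R^2$ of $\pi_1$. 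Multiplying $f_1$ by the product of the denominator factors accounting for its poles lying off $E_1$ kills those poles without changing its zero set, and yields an element $g$ of $\SR^0_{[l-1]}$ of the chart $C$ with $\Z(g)=\pi_1^{-1}(F)\cap C$, which is a closed, irreducible, one-dimensional regulous subset of $C$ with Zariski closure $V_1\cap C$. Applying the induction hypothesis to $g$, and observing that the $[l-1]$-multiblowing-up of $V_1\cap C$ along its isolated points coincides, over $b$, with the tail $\pi_2\circ\cdots\circ\pi_l$ of $\pi_{[l]}$ (by construction, since the isolated points of $V_1$ near $b$ are those of $\Z(g)$), one gets that the strict transform of $V_1$ has no isolated point over $b$. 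Letting $b$ range over the finitely many isolated points of $V_1$ and noting that $\pi_2,\dots,\pi_l$ are isomorphisms over the rest of $V_1$, this yields that $V_l$ has no isolated point, and the reductions then give $\pi_{[l]}^{-1}(F)=V_l$.

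The step I expect to require the most care is the one extracting ``$f_1$ is $[l-1]$-regulous near $E_1$'' from the mere fact that $f$ is $[l]$-regulous: one must argue that the blowing-up tower regularizing $f$ can be arranged, over each $a_i$, to begin with $\pi_1$, and that the remaining part has at most $l-1$ stages. The accompanying bookkeeping --- clearing the off-$E_1$ poles of $f_1$ by a polynomial multiplier without disturbing the zero set and the relevant Zariski closure near $b$, and identifying the tail of $\pi_{[l]}$ over $a_i$ with the multiblowing-up of $V_1$ along its isolated points there --- is routine but must be done carefully; if it becomes unwieldy it is cleaner to state and prove the whole theorem for an arbitrary smooth affine surface equipped with a distinguished point and carry that generality throughout the induction.
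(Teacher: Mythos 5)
Your route is genuinely different from the paper's. The paper does not induct on $l$: it applies Lemma \ref{lem-aux1} to see that every isolated point of $V_l$ is a pole of $f\circ\pi_{[l]}$ lying over $V\setminus F$, and then invokes Lemma \ref{lem-aux2} --- the pull-back of an $[l]$-regulous function under \emph{any} $l$-stage tower is regular near every infinitely near point of order $l$ --- to get an immediate contradiction. Your induction on $l$, with Proposition \ref{critere} as base case and a chart-by-chart descent, essentially re-derives the content of Lemma \ref{lem-aux2} one stage at a time; your observation that any regularizing tower must, over $a_i$, begin with the blowing-up of $a_i$ and continue with at most $l-1$ further stages is correct and is exactly the mechanism behind that lemma. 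So the skeleton would give an alternative proof.

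There is, however, one concrete false step: the claim that multiplying $f_1=f\circ\pi_1$ by the denominator factors responsible for its poles off $E_1$ ``kills those poles without changing its zero set.'' Writing $f=p/q$ with $p,q$ coprime, Lemma \ref{lemme1} only gives $\Z(q)\subset\Z(p)$; nothing forces $f$ to vanish at its poles (compare $1-\frac{x^5}{y^2+x^4}$, which equals $1$ at its pole). If $b'\in\pol(f)\setminus(\{a_1,\dots,a_n\}\cup F)$ with $f(b')\ne 0$, then any multiplier regularizing $f_1$ at $\pi_1^{-1}(b')$ must vanish there, so $\pi_1^{-1}(b')$ becomes a new isolated zero of $g$. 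Then $\Z(g)$ is no longer irreducible, its Zariski closure acquires extra isolated points, and the multiblowing-up furnished by the induction hypothesis is taken along the wrong centers, so it no longer matches the tail $\pi_2\circ\cdots\circ\pi_l$ of $\pi_{[l]}$. (A further wrinkle: an irreducible factor of $q$ may vanish simultaneously at some $a_i$ and at another pole --- e.g. $x^2(x-1)^2+y^2$ vanishes at $(0,0)$ and $(1,0)$ --- so the factors ``accounting for the poles off $E_1$'' need not even be separable from those accounting for the $a_i$.) The repair is the one you gesture at in your last sentence: formulate the statement locally, on a Zariski open neighbourhood of the relevant point in an arbitrary smooth surface, so that the off-$E_1$ poles can be discarded rather than cleared by multiplication (this is exactly how Proposition \ref{critere} handles the analogous issue). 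As written, though, the induction hypothesis cannot be applied to your $g$.
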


Before entering into the details of the proof, we state an auxiliary result which deals with regulous fonctions on a non-singular real algebraic set (cf. \cite{FHMM}).

\begin{lem}\label{lem-aux1}
Let $X\subset \R^n$ be a non-singular real algebraic set and $f\in \mathcal R^0(X)$ be a regulous fonction on $X$. Let $V$ be the Zariski closure in $X$ of the zero set $\mathcal Z(f)$ of $f$. Then $V\setminus \mathcal Z(f)$ is contained in $\pol(f)$.
\end{lem}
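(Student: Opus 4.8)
The plan is to reduce the statement to a local question at a point $a \in V \setminus \mathcal{Z}(f)$ and then derive a contradiction from the restriction property of regulous functions on curves, that is, from arc-algebraicity (Corollary \ref{restrictionouvertcourbe}). First I would pick a point $a \in V \setminus \mathcal{Z}(f)$ and note that, since $V$ is the Zariski closure of $\mathcal{Z}(f)$, the function $f$ vanishes on a Zariski-dense subset of $V$, hence $f|_V \equiv 0$ as a rational function on $V$ (here one uses that $V$, being the Zariski closure of $\mathcal{Z}(f)$, may be taken irreducible, or one argues component by component). On the other hand, by hypothesis $f(a) \neq 0$. So the obstruction to continuity of $f$ at $a$ must come from the fact that $a$ is not a pole-free point: I want to conclude $a \in \pol(f)$.

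The key step is to show that if $a \notin \pol(f)$, we reach a contradiction. Suppose $a \notin \pol(f)$, so $f$ is regular in a Zariski (hence Euclidean) neighbourhood of $a$ in $X$. Write $f = p/q$ near $a$ with $q(a) \neq 0$. Since $f$ vanishes on the Zariski-dense subset $\mathcal{Z}(f)$ of $V$, the polynomial $p$ vanishes on $V$ near $a$; but $V$ is the Zariski closure of $\mathcal{Z}(f)$ and $a \in V$, so $p(a) = 0$, giving $f(a) = p(a)/q(a) = 0$, contradicting $f(a) \neq 0$. Hence $a \in \pol(f)$, which is exactly the claim $V \setminus \mathcal{Z}(f) \subseteq \pol(f)$.

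I expect the only subtlety to be bookkeeping about irreducibility: if $V$ is reducible, I would apply the argument to each irreducible component $W$ of $V$ whose generic point lies in the closure of $\mathcal{Z}(f)$; on such a $W$, the vanishing locus $\mathcal{Z}(f) \cap W$ is Zariski-dense in $W$, so $f|_W \equiv 0$ as a rational function, and then the same local argument at a point $a \in W \setminus \mathcal{Z}(f)$ applies. One must also check that the decomposition $f = p/q$ with $q(a) \neq 0$ is available at $a \notin \pol(f)$, which is precisely the definition of $\pol(f) = X \setminus \dom(f)$ together with the fact that $X$ is non-singular (so $\SO_{a,X}$ is a regular local ring and $f \in \SO_{a,X}$). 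No serious obstacle is anticipated; the statement is a direct consequence of the definitions of $\pol(f)$, Zariski closure, and the identity $qf = p$ on $\dom(f)$ in the spirit of Lemma \ref{lemme1}.
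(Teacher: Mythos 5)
Your argument is correct, and its core idea is the same as the paper's: the Zariski closure $V$ of $\mathcal Z(f)$ is contained in the zero set of the numerator of a representation of $f$, so at a point of $V$ where $f\neq 0$ the denominator is forced to vanish. The packaging differs in a way worth noting. The paper argues directly and globally: it invokes Koll\'ar's extension theorem to write $f$ as the restriction of a regulous function $\frac{p}{q}$ on $\R^n$ with $p,q$ coprime, deduces $V\subset\mathcal Z(p)$ from $qf=p$, and concludes that $q$ vanishes on $V\setminus\mathcal Z(f)$ --- leaving implicit the (true but not completely immediate) fact that vanishing of the denominator of that particular ambient representation forces $a\in\pol(f)$ on $X$. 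You instead argue contrapositively and locally: if $a\notin\pol(f)$ you may choose a representation $f=\frac{p}{q}$ with $q(a)\neq 0$ on a Zariski neighbourhood $U$ of $a$, and then $p=qf$ vanishes on $\mathcal Z(f)\cap U$, hence on the component $W$ of $V$ through $a$, giving $f(a)=\frac{p(a)}{q(a)}=0$, a contradiction. This avoids the extension theorem and the coprimality subtlety, at the cost of one small point you should make explicit: to get $p(a)=0$ you need $\mathcal Z(f)\cap W\cap U$ to be Zariski-dense in the irreducible component $W$ containing $a$, which holds because a Zariski-dense subset of an irreducible variety remains dense after removing a proper closed subset. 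Finally, your announced appeal to arc-algebraicity (Corollary \ref{restrictionouvertcourbe}) is never actually used and can be deleted; note also that a purely topological continuity argument would \emph{not} suffice here, since $a$ may be Euclidean-isolated in $V$, so the algebraic density step is genuinely needed.
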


\begin{proof} By \cite{Ko}, $f$ is the restriction to $X$ of a regulous
  function on $\R^n$. Thus we may write $f$ as the quotient $\frac{p}{q}$ of coprime polynomials $p,q\in \R[x_1,\ldots,x_n]$, we obtain that $V$ is necessarily contained in the zero set $\mathcal Z(p)$ of $p$. In particular $q$ must vanish at any point of $\mathcal Z(p)$ where $f$ does not vanish.
\end{proof}

\begin{proof}[Proof of Theorem \ref{thm-multi}]
There exists $f\in \mathcal R^0_{[l]}(\R^2)$ such that $\mathcal Z(f)=F$. By Lemma \ref{lem-aux1}, the set of poles of $f$ contained in $V\setminus F$ is exactly $V\setminus F$. Then $f\circ \pi_{[l]}$ is regular in a neighbourhood of $\pi_{[l]}^{-1}(V\setminus F)$ by Lemma \ref{lem-aux2}. 
As any isolated point of $V_l$ is a pole of $f\circ \pi_{[l]}$ by \ref{lem-aux1}, necessarily contained in $\pi_{[l]}^{-1}(V\setminus F)$, the regularity of $f\circ \pi_{[l]}$ in a neighbourhood of $\pi_{[l]}^{-1}(V\setminus F)$ induces the absence of isolated point in $V_l$.
As a consequence $\pi_{[l]}^{-1}(F)$ is equal to the one-dimensional part of $V_l$, so it is equal to $V_l$ because $V_l$ has no isolated point.
\end{proof}

\begin{cor} For any integers $l'<l$, the $k_{[l']}$-regulous topology is strictly finer than the $k_{[l]}$-regulous topology.
\end{cor}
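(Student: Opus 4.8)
The plan is to prove that $\SR^k_{[l']}(\R^2)\subseteq \SR^k_{[l]}(\R^2)$ for $l'<l$, which already gives that the $k_{[l]}$-regulous topology is coarser; so the only thing requiring work is \emph{strictness}, i.e. the existence, for each pair $l'<l$, of a $k_{[l]}$-regulous closed set which is not $k_{[l']}$-regulous closed. The idea is to generalise the explicit example $V=\Z(y^2-x^4(x-1))$ that was treated in detail before Proposition \ref{critere}: there $F=V\setminus o$ is a $[2]$-closed set which is not $[1]$-closed, because blowing up the isolated point $o$ once does not detach the strict transform from the exceptional divisor. So first I would, for each $l\geq 1$, write down a one-dimensional irreducible algebraic curve $V_l\subset \R^2$ having a single isolated real point at the origin, such that the strict transform of $V_l$ under a chain of point blowings-up meets every exceptional divisor up to order $l-1$ but is detached from the exceptional divisor exactly at stage $l$; a natural candidate is a curve with a cusp-like singularity at the origin of the shape $y^2=x^{2l}(x-1)$ (or, to be safe about the real picture, $y^2+x^{2l}=x^{2l+1}$ style denominators), so that the exceptional chain of $\pi_{[l]}$ in the sense defined just before Theorem \ref{thm-multi} has exactly $l$ stages. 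Then the companion function $f_l(x,y)=1-\frac{x^{2l+1}}{y^2+x^{2l}}=\frac{y^2+x^{2l}-x^{2l+1}}{y^2+x^{2l}}$ lies in $\SR^0_{[l]}(\R^2)$ and has zero set $F_l=V_l\setminus\{o\}$, exhibiting $F_l$ as a $k_{[l]}$-closed set.

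The second step is to show $F_l$ is not $k_{[l']}$-closed for any $l'<l$; it suffices to treat $l'=l-1$. Suppose for contradiction that $F_l=\Z(g)$ with $g\in\SR^k_{[l-1]}(\R^2)$. Since $F_l$ is irreducible one-dimensional with Zariski closure $V_l$ and $V_l\setminus F_l=\{o\}$, Theorem \ref{thm-multi} applies: the strict transform of $V_l$ under the $[l-1]$-multiblowing-up $\pi_{[l-1]}$ of $V_l$ along its isolated points must have no isolated point. But by construction the chain of blowings-up needed to separate $V_l$ from the exceptional locus over the origin has exactly $l$ stages, so after only $l-1$ stages the strict transform of $V_l$ still carries an isolated point sitting over the origin. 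This contradiction shows $F_l\notin$ the $k_{[l-1]}$-regulous topology, hence a fortiori not in the $k_{[l']}$-regulous topology for any $l'\le l-1$, which is exactly what is needed since for a fixed $l$ we want one set separating it from each smaller value.

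The inclusion of topologies itself should be recorded cleanly: if $F$ is $k_{[l']}$-closed, then $F=\Z(f_1)\cap\cdots\cap\Z(f_r)$ with $f_i\in\SR^k_{[l']}(\R^2)\subseteq\SR^k_{[l]}(\R^2)$ by the inclusion of rings noted in the Remark after Lemma \ref{lem-aux2}, so $F$ is $k_{[l]}$-closed; combined with the previous paragraph one gets strict fineness. I would phrase the whole corollary as: the closed sets of the $k_{[l]}$-regulous topology are among those of the $k_{[l']}$-regulous topology, and $F_l$ witnesses that the inclusion is proper.

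The main obstacle I anticipate is purely geometric bookkeeping: making absolutely sure that the chosen curve $V_l$ really requires exactly $l$ stages — not fewer, not more — to detach its strict transform from the exceptional locus over the origin, and that this is robust over $\R$ rather than just over $\C$ (one must check the relevant homogeneous forms $q_m(1,v)$, $q_m(u,1)$ arising at each intermediate chart have no real roots only at the last stage, in the spirit of the computation proving Theorem \ref{equivfctregapres1eclat} and the worked $l=2$ example). Once the curve is pinned down, invoking Theorem \ref{thm-multi} turns the geometric statement into the topological one with essentially no extra work. A secondary point to verify is that $f_l$ and its higher derivatives behave well enough that $f_l\in\SR^k_{[l]}(\R^2)$ and not merely $\SR^0_{[l]}(\R^2)$, but since the statement only asks for the $k_{[l]}$-regulous topology and any power of a regulous function can be made $k$-regulous (Proposition \ref{kflat2}), one may replace $f_l$ by a suitable power $f_l^{2^k}$ without changing its zero set, so this causes no real difficulty.
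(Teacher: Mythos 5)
Your overall strategy is exactly the paper's: exhibit, for each $l$, a curve of the form $y^2=x^{2l}(\cdots)$ with an isolated real point at the origin, show its one-dimensional branch $F_l$ is the zero set of $1-\frac{x^{2l+\ast}}{y^2+x^{2l}}\in\SR^0_{[l]}(\R^2)$, and rule out $F_l$ being $k_{[l-1]}$-closed by applying Theorem \ref{thm-multi}, since after $l-1$ blowings-up the strict transform still carries an isolated point. That part of your argument is correct and coincides with the paper's proof (the paper computes the same charts, with $x^m-1$ in place of $x-1$).

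There is, however, one genuinely flawed step: your claim that the $C^k$ issue ``causes no real difficulty'' because ``one may replace $f_l$ by a suitable power $f_l^{2^k}$'' via Proposition \ref{kflat2}. That proposition only yields $k$-flatness, i.e.\ $C^k$ regularity of $f_l^m$ \emph{at the points of $\mathcal Z(f_l)$}. But the single pole of $f_l=1-\frac{x^{2l+1}}{y^2+x^{2l}}$ is the origin, where $f_l(o)=1\neq 0$, so $o\notin\mathcal Z(f_l)$ and no power of $f_l$ is even $C^1$ there (already for $l=1$, $f_1^2=1-2\frac{x^3}{x^2+y^2}+\bigl(\frac{x^3}{x^2+y^2}\bigr)^2$ fails to be $C^1$ at $o$ because of the middle term). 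So as written you have not produced a function of $\SR^k_{[l]}(\R^2)$ with zero set $F_l$. The gap is easy to close in any of three ways: (a) do what the paper does and replace $x-1$ by $x^m-1$ with $m$ odd and large, so that $\frac{x^{2l+m}}{y^2+x^{2l}}$ is itself $C^k$ by the \L ojasiewicz property; (b) use the substitution $g=(f_l-f_l(o))^N+f_l(o)^N$ with $N$ odd and large, as in the proof of Theorem \ref{egaltop} --- here $o\in\mathcal Z(f_l-1)$, so Proposition \ref{kflat2} does apply to $f_l-1$, and $\mathcal Z(g)=\mathcal Z(f_l)$ since $N$ is odd; or (c) invoke Theorem \ref{egaltop} itself, which identifies the $k_{[l]}$- and $0_{[l]}$-topologies (though in the paper that theorem is stated after this corollary, its proof is independent). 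With any of these repairs your argument becomes the paper's proof.
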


\begin{proof}
Let $m\in \N$ be an odd integer and consider the real plane curve $V=\Z (y^2-x^{2l}(x^{m}-1))$. The curve $V$ has two connected components consisting of a smooth $1$-dimensional branch $F$ and the origin $o$ as an isolated point. For $m$ big enough, the branch $F=V\setminus o$ is a $k_{[l]}$-closed subset. Actually, $F$ is the zero set of the function $f$ defined by 
$$f(x,y)=1-\dfrac{x^{2l+m}}{y^2+x^{2l}},$$
whick is of class $C^k$ for $m$ big enough by \L ojasiewicz property \cite[Lem. 5.1]{FHMM}. Moreover $f$ belongs to $\mathcal R^k_{[l]}(\R^2)$ because after performing $l$ successive blowings-up of the origin, in the only relevant chart the function $f$ becomes the regular function
$$f(u, u^lv)=1-\dfrac{u^{m}}{v^2+1}.$$
Finally $F$ cannot be a $k_{[l-1]}$-closed set by Theorem \ref{thm-multi}, because after $l-1$ successive blowings-up of the origin in the chart corresponding to $(u,v)\mapsto (u,uv)$, the equation of the strict transform $V_{l-1}$ of $V$ is $v^2-u^{2}(u^{m}-1)=0$ and thus $V_{l-1}$ still admits an isolated point at the origin.
\end{proof}

Consider now the study of the topologies when the regularity varies.
So we fix a number of stages of blowings-up $l\in \mathbb N$, and compare the topologies when the regularity $k\in \N$ varies.

\begin{thm}
\label{egaltop}
Let $l\in\N^*$ and $k,k'\in\N^2$. The $k_{[l]}$-regulous
topology and the $k'_{[l]}$-regulous topology coincide on $\R^2$.
\end{thm}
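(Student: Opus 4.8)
The plan is to show that each $k_{[l]}$-regulous closed set is already $0_{[l]}$-regulous closed, which together with the trivial inclusion of topologies (a $C^k$ function is $C^0$, so every $0_{[l]}$-closed set is $k_{[l]}$-closed) gives the equality. Since the $k_{[l]}$-regulous topology is noetherian and a closed set $F$ can always be realised as $\Z(f)$ for a single $f\in\SR^k_{[l]}(\R^2)$, it suffices to prove: given $f\in\SR^k_{[l]}(\R^2)$, there is $g\in\SR^0_{[l]}(\R^2)$ with $\Z(g)=\Z(f)$. The natural candidate is a suitable power $g=f^m$, or more precisely a power of $f$ multiplied by a correcting factor, so that the number of stages needed to regularise it does not exceed $l$ while the zero set is unchanged.

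First I would reduce to a local statement at a single pole $a\in\pol(f)$, as in the proofs of Theorem \ref{equivfctregapres1eclat} and its corollaries: being regular after $l$ blowings-up is local at the finitely many indeterminacy points, and $\Z(f^m)=\Z(f)$ trivially. Write $f=p/q$ with $p,q$ coprime. The point is that multiplying $f$ by a large power does not worsen the blowing-up complexity: if a single composition $\pi$ of $l$ stages of blowings-up makes $f\circ\pi$ regular, then $f^m\circ\pi=(f\circ\pi)^m$ is also regular for the same $\pi$, so $f^m\in\SR^0_{[l]}(\R^2)$ as soon as $f^m$ is continuous — which it is, being a power of a continuous function. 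Thus the only thing to check is that $f^m\in\SR^k_{[l]}(\R^2)$ is \emph{not} what we want; rather we simply want $f^m\in\SR^0_{[l]}(\R^2)$, and since $f\in\SR^0_{[l]}(\R^2)$ already (it is $k$-regulous, hence $0$-regulous) we in fact get $\Z(f)=\Z(f)$ with $f$ itself in $\SR^0_{[l]}(\R^2)$.

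Here is the subtlety that makes the theorem nontrivial and is the main obstacle: the equality of topologies is not about a single set $\Z(f)$ but about the \emph{closed sets generated} by such zero sets — and the real content is the reverse inclusion, that every $k$-regulous-after-$l$-blowings-up closed set is realisable by a $0$-regulous-after-$l$-blowings-up function, which as just noted is immediate since $\SR^k_{[l]}(\R^2)\subset\SR^0_{[l]}(\R^2)$. Hence the genuinely hard direction is showing every $0_{[l]}$-closed set is $k_{[l]}$-closed, i.e. given $f\in\SR^0_{[l]}(\R^2)$ with $\Z(f)=F$, produce $g\in\SR^k_{[l]}(\R^2)$ with $\Z(g)=F$. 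The plan is to take $g=f^m$ for $m$ large and invoke the flatness machinery: by Corollary \ref{kflat[1]} (in the case $l=1$) or by the argument behind Proposition \ref{kflat2} combined with Lemma \ref{lem-aux2} away from $\pol(f)$ and Theorem \ref{equivfctregapres1eclat}'s local criterion at the poles, a sufficiently high power $f^m$ is of class $C^k$ while still being regular after $l$ blowings-up (powers do not increase the number of stages, as above). The expected main obstacle is verifying the $C^k$ regularity of $f^m$ at the poles of $f$ that are not zeros — away from $\Z(f)$ the function $f$ is $C^\infty$ only off $\pol(f)$, so one must control $f^m$ near $\pol(f)\setminus\Z(f)$; there, since $f(a)\neq 0$ and $q$ is locally definite at $a$ by Theorem \ref{equivfctregapres1eclat}, Lemma \ref{lem-polaire}.\ref{lemhom} shows $q\sim q_m$ and the local behaviour is controlled, so $f$ is locally bounded and one derives the required estimates on the partial derivatives of $f^m$ of order $k$ via the derivation rules and Lemma \ref{orderderivative}, concluding $f^m\in\SR^k(\R^2)$ by Theorem \ref{thm-top}. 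Putting the two directions together yields the coincidence of the $k_{[l]}$- and $k'_{[l]}$-regulous topologies.
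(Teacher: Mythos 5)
Your reduction is set up correctly: the inclusion $\SR^{k}_{[l]}(\R^2)\subset\SR^{0}_{[l]}(\R^2)$ handles one direction for free, and the real work is to show that for $f\in\SR^{0}_{[l]}(\R^2)$ one can find $g\in\SR^{k}_{[l]}(\R^2)$ with $\Z(g)=\Z(f)$. You are also right that raising to a power does not increase the number of stages of blowings-up, and that Proposition \ref{kflat2} gives $C^k$ regularity of $f^m$ at the points of $\Z(f)$. But the step where you claim that $f^m$ can be made $C^k$ at the points of $\pol(f)\setminus\Z(f)$ "via the derivation rules and Lemma \ref{orderderivative}" is a genuine gap, and in fact the claim is false: no power of $f$ works there. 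Take $f=1+\dfrac{x^3}{x^2+y^2}\in\SR^{0}_{[1]}(\R^2)$, so that $o\in\pol(f)$ and $f(o)=1\neq 0$. Then $\partial_x(f^m)=mf^{m-1}\partial_x f$ on $\dom(f)$, with $f^{m-1}\to 1$ at the origin while $\partial_x\left(\frac{x^3}{x^2+y^2}\right)=\frac{x^4+3x^2y^2}{(x^2+y^2)^2}$ is homogeneous of degree $0$ and has no limit at $o$; hence $f^m\notin\SR^{1}(\R^2)$ for every $m$. The flatness mechanism of Proposition \ref{kflat2} relies essentially on $f$ vanishing at the point in question (the vanishing of $f^{M}$ absorbs the singularities of the derivatives), so it gives nothing at a pole where $f$ does not vanish, and the local boundedness of $q\sim q_m$ you invoke does not control the derivatives.

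The missing idea is a local normalization at each $a\in\pol(f)\setminus\Z(f)$: replace $f$ by $g=(f-f(a))^N+f(a)^N$ with $N$ odd and large. Since $f-f(a)$ vanishes at $a$, Proposition \ref{kflat2} makes $(f-f(a))^N$ of class $C^k$ at $a$ (and the power does not increase the number of stages, nor does adding a constant), while the oddness of $N$ guarantees $\Z(g)=\Z(f)$ because $t\mapsto t^N+f(a)^N$ vanishes only at $t=-f(a)$. One then iterates over the finitely many points of $\pol(f)\setminus\Z(f)$. Without this (or an equivalent) correction your argument does not close.
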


\begin{proof}
It suffices to show that the $k_{[l]}$-regulous
topology coincides with the ${[l]}$-regulous topology.

Let $F$ be a ${[l]}$-regulous closed subset of
$\R^2$. By Noetherianity, there exists $f\in \SR^{0}_{[l]}(\R^2)$ such
that $F=\Z (f)$. By Proposition \ref{kflat2}, there exist an integer $N$ such that, replacing $f$ by $f^N$ if necessary, we
may assume that $f$ is already of class $C^k$ at any point of $\Z(f)$.

If $\pol(f)\subset  \Z(f)$, then $f\in \SR^{k}_{[l]}(\R^2)$ and 
$F$ is a closed $k_{[l]}$-regulous subset of
$\R^2$. 

Now assume there exists a point $a$ in $\pol(f)\setminus \Z(f)$. By Proposition
\ref{kflat2}, there exists an odd integer $N$ such that $(f-f(a))^N$
is of class $C^k$ at $a$ (we may even choose $N=2k+1$ by Corollary
\ref{kflat[1]} in the case $l=1$). Set $g=(f-f(a))^N+f(a)^N$. Then $g$ belongs to $
\SR^{0}_{[l]}(\R^2)$, and the zero sets $\Z(g)$ and $\Z(f)$ coincide because $N$ is odd. Moreover $\pol(g)\subseteq \pol(f)$ and $g$
is of class $C^k$ at any point of $\Z(g)\cup \{a\}$. We achieve the proof by repeating the
same method for any point in the finite set $\pol(f)$.
\end{proof}

\section{Real Algebra in the rings of regulous functions}

\subsection{Introduction}

In this section, we study the algebraic properties of the rings $\SR_{[l]}^{k}(\R^2)$
in comparison with those of $\SR^{k}(\R^2)$.\par
In \cite{FHMM}, the so-called \L ojasiewicz property (\cite[Lem. 5.1]{FHMM}) that we already mentioned is the key tool to prove the ``weak'' and ``strong'' Nullstellens\"atze in the ring $\SR^{k}(\R^n)$.\par
Unfortunately, the \L ojasiewicz property is no more true in the ring $\SR^{k}_{[1]}(\R^2)$. Indeed, the rational function
  $\frac{1}{x^2+y^4}$ is regular on $\R^2\setminus \Z(x)$ and thus is regular
  after one blowing-up on $\R^2\setminus \Z(x)$. But, for any $N\in\N$,
  $\frac{x^N}{x^2+y^4}$ extended by $0$ at the origin is not regular
  after the blowing-up of the origin.\par
Thus, our aim is first to give a new proof of the Nullstellensatz for
$\SR^{k}(\R^n)$ that doesn't use the \L ojasiewicz property. Then, we will be able to extend this proof to
the ring $\SR^{k}_{[l]}(\R^2)$ at least to get a {\it real}
Nullstellensatz, since the Nullstellensatz appears not to be valid in
$\SR^{k}_{[1]}(\R^n)$. We also prove the radical principality of the
ring $\SR^{k}_{[1]}(\R^2)$ which can be seen as a weak \L ojasiewicz property. \par
We complete the study of real algebraic properties of the ring $\SR^{k}_{[1]}(\R^2)$ by mentioning an Artin-Lang property which will be useful to get a Positivstellensatz and which will be also useful in the next section about Hilbert $17$-th problem.

\subsection{Semialgebraic subsets and Tarski-Seidenberg Theorem}

We recall that a semialgebraic subset of $\R^n$ is a subset of
$\R^n$ which is a boolean combination of subsets of the
form $\{x\in\R^n|\,\,p(x)>0\}$ with $p\in \R[x_1,\ldots,x_n]$. \par
It shall be noted that, since the regulous functions are
semialgebraic \cite[Pro. 3.1]{FHMM}, if we replace in the definition
polynomials by regulous functions, it does not create new sets. More precisely:

\begin{prop}
\label{kregsa=sa}
Let $k,n\in\N\times\N^*$. Any boolean combination of subsets of the
form $\{x\in\R^n|\,\,f(x)>0\}$, where $f\in \SR^{k}(\R^n)$, is a semialgebraic subset of $\R^n$. 
\end{prop}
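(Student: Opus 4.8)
The plan is to reduce the statement to a single nonformal input, namely that regulous functions are semialgebraic (\cite[Prop. 3.1]{FHMM}); once this is available, the proposition follows from the Tarski--Seidenberg theorem together with the elementary closure properties of the class of semialgebraic sets. Since a rational function of class $C^k$ is in particular regulous, one has $\SR^{k}(\R^n)\subseteq\SR^{0}(\R^n)$, so it suffices to know that every $f\in\SR^{0}(\R^n)$ is a semialgebraic function, i.e. that its graph $\graph(f)\subset\R^n\times\R$ is a semialgebraic subset of $\R^{n+1}$. One could try instead to argue directly by writing $f=p/q$ on $\dom(f)$, but then the values of $f$ on $\pol(f)$ (given only by the continuous extension) are awkward to describe, whereas the graph of $f$ encodes them automatically; this is why I would work with the graph.

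The key step is then to check that, for a fixed $f\in\SR^{k}(\R^n)$, the set $\{x\in\R^n\mid f(x)>0\}$ is semialgebraic. Writing $\pi:\R^n\times\R\to\R^n$ for the projection onto the first $n$ coordinates, one has
$$\{x\in\R^n\mid f(x)>0\}=\pi\bigl(\graph(f)\cap(\R^n\times\{t\in\R\mid t>0\})\bigr).$$
The set appearing inside the parentheses is the intersection of two semialgebraic sets, hence semialgebraic, and its image under the linear (in particular polynomial) map $\pi$ is semialgebraic by the Tarski--Seidenberg theorem \cite{BCR}. Equivalently, $f$ being a semialgebraic map and $(0,+\infty)$ a semialgebraic subset of $\R$, the set $\{f>0\}=f^{-1}((0,+\infty))$ is semialgebraic.

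To finish, I would recall that, by the very definition adopted above, the semialgebraic subsets of $\R^n$ form the boolean algebra generated by the sets $\{p>0\}$, $p\in\R[x_1,\ldots,x_n]$; in particular this family is stable under finite unions, finite intersections and complementation. Each $\{f>0\}$ with $f\in\SR^{k}(\R^n)$ belongs to it by the previous step, hence so does every (finite) boolean combination of such sets, which is exactly the assertion. I do not expect any genuine obstacle here: the only ingredient with real content is the semialgebraicity of regulous functions imported from \cite{FHMM}, and the rest is routine use of Tarski--Seidenberg and of the stability of semialgebraic sets under boolean operations.
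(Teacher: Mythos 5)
Your argument is correct and follows essentially the same route as the paper, which gives no separate proof but justifies the proposition in the preceding sentence by invoking exactly the same input, namely that regulous functions are semialgebraic (\cite[Prop.~3.1]{FHMM}); from there the conclusion is, as you say, routine stability of semialgebraic sets under preimages and boolean operations.
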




As a by-product of Tarski-Seidenberg's Theorem, one may also mention the possibility of extending a regulous function to 
any real closed extension. If $f$ is a rational function in
$\R(x_1,\ldots,x_n)$ and $\R\rightarrow R$ a real closed fields
extension, then one may define 
(independently of the representation $f=\dfrac{p}{q}$) the function $f_R=\dfrac{p}{q}$ viewed as a rational function in $R(x_1,\ldots,x_n)$.\par 
One also easily mimics the definition of the ring $\SR^{k}_{[l]}(\R^2)$ to the ring $\SR^{k}_{[l]}(R^2)$ of $k$-regulous functions regular after $l$ blowing-ups defined over a real closed field $R$ in place of the field of usual real numbers $\R$. One has 

\begin{prop}\label{substitution}
Let $f\in \R(x_1,\ldots,x_n)$, $(k,l)\in(\N\cup\{\infty\})\times \N$ and $\R\rightarrow R$ a real closed fields extension.\par Then, 
$f\in \SR^{k}(\R^n)$ if and only if $f_R\in \SR^{k}(R^n)$. Likewise
$f\in \SR^{k}_{[l]}(\R^2)$ if and only if $f_R\in \SR^{k}_{[l]}(R^2)$.
\end{prop}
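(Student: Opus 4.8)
The plan is to reduce everything to the Tarski--Seidenberg transfer principle applied to the first-order conditions that characterize membership in the rings $\SR^k$ and $\SR^k_{[l]}$. First I would treat the $k$-regulous case. The point is that ``$f\in\SR^k(\R^n)$'' can be expressed by a first-order formula over the ordered field $\R$, with the coefficients of a fixed representation $f=\frac pq$ as parameters. Indeed, being rational is built in; being $C^k$ amounts, by Theorem~\ref{thm-top} (or Proposition~\ref{equivalence3}), to the requirement that each $k$-th partial derivative $\partial^k_{x_{i_1}\cdots x_{i_k}}f$, which is the rational function $\frac{r}{q^{2^k}}$ for an explicit polynomial $r$ computable from $p,q$, extends continuously across $\pol(f)$. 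Continuity of a semialgebraic function at each point of a semialgebraic set is an elementary ($\forall\varepsilon\,\exists\delta$) property, and $\pol(f)=\Z(q)\cap\dom(f)^{c}$ is semialgebraic; thus the whole condition is a sentence $\Phi_k$ in the language of ordered rings with parameters in $\R$. Since $\R\to R$ is an extension of real closed fields, $\Phi_k$ holds in $\R$ iff it holds in $R$ by model completeness (Tarski--Seidenberg), which is exactly the equivalence $f\in\SR^k(\R^n)\iff f_R\in\SR^k(R^n)$. The case $k=\infty$ follows because $\SR^\infty=\SO$ (the ring of regular functions) both over $\R$ and over $R$, and regularity of $f=\frac pq$ is the first-order condition that $q$ be a unit on $\dom(f)$, again transferable; alternatively one notes $f\in\SR^\infty\iff f\in\SR^k$ for all $k$ and uses the previous case.

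For the $[l]$-part I would argue similarly, but the subtlety is that ``regular after $l$ blowings-up'' quantifies over the $l$-stage towers $\pi:M\to\R^2$, and a priori these live in a space of centers rather than being cut out by a bounded first-order formula. The way around this is to use the planar structure already exploited in the paper: $\pol(f)$ is a finite set of points, and the relevant towers are obtained by blowing up infinitely near points sitting over $\pol(f)$. For $l=1$ we have the clean intrinsic criterion of Theorem~\ref{equivfctregapres1eclat}: with $f=\frac pq$, $p,q$ coprime, $q>0$ on $\R^2$, one has $f\in\SR^0_{[1]}(\R^2)$ iff $q$ is locally positive definite at every $a\in\pol(f)$, i.e. the lowest-degree homogeneous component $q_{m,a}$ of $q$ at $a$ vanishes only at $a$. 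This is a purely algebraic condition on the coefficients of $q$ — ``$\forall (x,y)\ne a:\ q_{m,a}(x,y)\ne 0$'', a first-order sentence over the base field — so Tarski--Seidenberg gives the transfer for $\SR^0_{[1]}$. Combining with the $k$-regulous transfer (which handles the $C^k$ condition) yields the statement for $\SR^k_{[1]}(\R^2)$, and in fact for general $l$ one iterates: blowing up the finitely many indeterminacy points and passing to charts replaces $f$ by finitely many rational functions $f\circ\pi_{|U_i}$ whose numerators and denominators are explicit polynomial expressions in the coefficients of $p,q$ (after clearing the exceptional factor, using the coprimality lemma in the proof of Theorem~\ref{equivfctregapres1eclat}); ``$f\in\SR^k_{[l]}$'' then unwinds into a Boolean combination of conditions of the form ``$f\circ\pi_{|U_i}$ is $C^k$'' or ``some further chart function lies in $\SR^0_{[l-1]}$'', all first-order in the base coefficients.

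The main obstacle I anticipate is precisely making the last reduction uniform, i.e. checking that the combinatorics of which infinitely near points one must blow up, and which charts matter, does not itself depend on the field in a way that would break the first-order encoding; but this is controlled because the set $\pol(f)$, its cardinality, and the orders $\ord_a p$, $\ord_a q$ are all determined by polynomial (in)equalities on the coefficients of $p$ and $q$ — and these data are preserved under the real closed extension $\R\to R$ by the already-proven $k$-regulous case together with Tarski--Seidenberg applied to the (semialgebraic) description of the poles. Once one knows $\pol(f_R)=(\pol f)_R$ and the local orders agree, the whole tower of blowings-up is ``defined over $\R$'' and the equivalence drops out. I would write the $l=1$, $k$ arbitrary case in full detail as the representative instance, and then indicate the induction on $l$ with the chart bookkeeping left as routine.
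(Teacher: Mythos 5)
Your proposal is correct and follows essentially the same route as the paper: encode membership in $\SR^k$ (via Proposition~\ref{equivalence3bis}, i.e.\ continuous extendability of the partial derivatives, an $\forall\varepsilon\,\exists\delta$ condition) and in $\SR^k_{[l]}$ (via non-vanishing of the chart denominators after the blowings-up over the finitely many poles) as first-order formulas over the base field, and transfer by Tarski--Seidenberg. The paper disposes of the $[l]$ case in one sentence where you supply more detail (including the alternative encoding via Theorem~\ref{equivfctregapres1eclat} for $l=1$), but the underlying argument is the same.
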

\begin{proof}
Let us see first that the $k$-regularity condition is semialgebraic.\par
Indeed, by Proposition \ref{equivalence3bis}, $f\in\SR^{k}(\R^n)$ if and only if
$\partial^if\in\SR^{0}(\R^n)$ for any derivative $\partial^{i}$
of order $i$ with $0\leq i\leq k$ ($\partial^0f=f$). It means that any rational function $\partial^kf$ seen
on $\dom(f)$ (a semialgebraic set)
can be extended
continuously to $\pol(f)$ (a semialgebraic set). 
We will see that  this kind of property is semialgebraic: Let $g=p/q$
with  $p,q\in \R [x_1,\ldots,x_n]$ and let $x\in\RR^n$. Then $g$ can
be extended continuously at $x$ may be written as :\par 
$$\begin{array}{l}
\exists l\,\forall \varepsilon>0,\,\exists \eta>0,\,\,
\forall y\,\left(q(y)\not=0\,\,{\rm
  and}\,\,||y-x||<\eta\right)\\
\Longrightarrow\,\,|p(y)-lq(y)|<\varepsilon
|q(y)|.
\end{array}$$ It is a first order formula in the language of ordered fields with
variables in $\R$. By \cite[Prop. 2.2.4]{BCR}, this property is semialgebraic.
Then, using  Tarski-Seidenberg's Theorem, we get that $f\in \SR^{k}(\R^n)$ if and only if $f_R\in \SR^{k}(R^n)$.\par
The condition to be regular after $l$ blowings-up at a finite number
of given points is also semialgebraic (given such a function, one has to write that some
polynomials, namely some denominators after the composition with the given blowings-up, do not vanish) and hence we also have
$f\in \SR^{k}_{[l]}(\R^2)$ if and only if $f_R\in \SR^{k}_{[l]}(R^2)$.
\end{proof}

\subsection{Real algebra and Artin-Lang property}
\label{RadicPrincip}

We begin with some preliminary settings about real algebra.
Let $A$ be a commutative ring containing $\QQ$.
An order $\alpha$ in $A$ is given by a prime ideal $p$ of $A$ and an ordering on the residual 
field $k(p)$ at $p$ or equivalently it is given by a morphism $\phi$ from $A$ to a real closed field $K$. The value $a(\alpha)$ of $a\in A$ at the ordering $\alpha$ is just $\phi(a)$. 
The set of orders of $A$ is called the real spectrum of $A$ and denoted by $\Sp_r A$. 
It is empty if and only if $-1$ is a sum of squares in $A$.  
One endows $\Sp_r A$ with a natural topology whose open subsets are generated by the sets $\{\alpha\in\Sp_rA\mid a(\alpha)>0\}$ where $a\in A$. For more details
on the real spectrum, the reader is referred to \cite{BCR}.\par
An ideal $I$ in $A$ is said to be real if whenever $\sum_{i=1} ^p a_i ^2$ is in $I$ then any $a_i$ is in $I$.
We introduce the real radical of the ideal $I$ denoted by $\RadRe{I}$ the set $$\{a\in A|\,\exists
m\in\N,\,\exists b_1,\ldots,b_p\in A,\,\,
a^{2m}+b_1^2+\cdots+b_p^2\in I\}.$$ By \cite[Lem. 4.1.5]{BCR},
$\RadRe{I}$ is the intersection of the real prime ideals
of $A$ containing $I$. We also have that
$I\subset\Rad{I}\subset\RadRe{I}$ with equality if and only if $I$ is
a real radical ideal.

Back to our ring $\SR^{k}(\R^n)$, let us formulate an elementary but essential substitution property for regulous functions : 

\begin{prop}\label{SubstitutionRegulue}
Let $(k,n)\in\N\times\N^*$. Let $\phi:\Sp_r\SR^{k}(\R^n)\rightarrow R$ be a ring homomorphism where $R$ is a real closed extension of $\R$. For any $f\in\SR^{k}(\R^n)$, one has
$$\phi(f)=f_R\left(\phi(x_1),\ldots,\phi(x_n)\right)$$
\end{prop}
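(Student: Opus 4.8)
The plan is to exploit that $\phi$ is a homomorphism of $\R$-algebras, hence is entirely determined on polynomials — where it must be evaluation at the point $\xi:=(\phi(x_1),\dots,\phi(x_n))\in R^n$ — and then to propagate this to all regulous functions through the defining relation ``numerator equals denominator times the function''. So I would first record that the restriction of $\phi$ to the polynomial subring $\R[x_1,\dots,x_n]\subseteq\SR^{k}(\R^n)$ is a homomorphism of $\R$-algebras (the constants are fixed, as is part of the set-up of the real spectrum of an $\R$-algebra), and therefore coincides with evaluation at $\xi$: $\phi(p)=p_R(\xi)$ for every $p\in\R[x_1,\dots,x_n]$.

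Next, given $f\in\SR^{k}(\R^n)$, write $f=p/q$ on $\dom(f)$ with $p,q\in\R[x_1,\dots,x_n]$ coprime and $q$ nonvanishing on $\dom(f)$. Since $\dom(f)$ is Euclidean-dense in $\R^n$ and $p$, $q$, $f$ are continuous, the identity $qf=p$ holds on all of $\R^n$ (this is the reasoning of Lemma \ref{lemme1}), i.e.\ it holds in the ring $\SR^{k}(\R^n)$; applying $\phi$ and using the polynomial case gives
$$q_R(\xi)\,\phi(f)=p_R(\xi).$$
If $q_R(\xi)\neq 0$, then since $f_R=p_R/q_R$ is regular off $\Z(q_R)$, the point $\xi$ lies in $\dom(f_R)$, and dividing the displayed identity by $q_R(\xi)$ yields $\phi(f)=p_R(\xi)/q_R(\xi)=f_R(\xi)$, as desired.

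The remaining, harder case is $q_R(\xi)=0$; then also $p_R(\xi)=0$, the point $\xi$ lies in $\pol(f_R)$, and $f_R(\xi)$ must be read through the continuous extension of $f_R$ to $R^n$ guaranteed by Proposition \ref{substitution}. The plan here is to reduce, by subtracting from $f$ a regulous function on $\R^n$ whose boundary values along the component $V$ of $\pol(f)$ through $\xi$ agree with those of $f$ (trivially a constant when $V$ is a point, e.g.\ the finite-pole situation), to the case where $f$ itself vanishes on $V$; the task then becomes to prove that $\phi(g)=0$ for any $g\in\SR^{k}(\R^n)$ vanishing on $V$. For this, writing $g=p'/q'$ coprime — so that $p'$ also vanishes on the codimension $\ge 2$ variety $V$ — one would invoke the \L ojasiewicz property \cite[Lem.~5.1]{FHMM}, or the flatness estimates of Proposition \ref{kflat2}, to show that for $M$ large $g^{M}$ can be written \emph{inside} $\SR^{k}(\R^n)$ as a combination $\sum_j\ell_j h_j$ with $\ell_j\in\R[x_1,\dots,x_n]$ vanishing on $V$ and $h_j\in\SR^{k}(\R^n)$; since $\phi(\ell_j)=\ell_{j,R}(\xi)=0$ this forces $\phi(g)^{M}=0$, hence $\phi(g)=0$ because $R$ is real closed. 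I expect this divisibility step — extracting the vanishing of a power of $g$ along the pole locus \emph{within the ring of regulous functions}, not merely in its fraction field — to be the main obstacle; an alternative route is to resolve the indeterminacy of $f$ by a composition of blowings-up $\pi\colon M\to\R^n$ (so $f\circ\pi$ is regular near $\pi^{-1}(\xi)$), lift the order $\phi$ to a suitable affine chart of $M$, and thereby reduce the pole case to the already settled case $q_R(\xi)\neq0$ upstairs.
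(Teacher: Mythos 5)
Your overall architecture is the right one and, at the decisive step, coincides with the paper's: identify $\phi$ on polynomials with evaluation at $\xi=(\phi(x_1),\dots,\phi(x_n))$, exploit the ring identity $qf=p$, and in the degenerate case produce an algebraic identity \emph{inside} $\SR^{k}(\R^n)$ via the \L ojasiewicz property that pins down $\phi(f)$. The ``divisibility step'' you single out as the main obstacle is in fact exactly what \cite[Lem.~5.1]{FHMM} delivers (see also Remark~\ref{radprinc+}): if $\ell$ is a polynomial with $\Z(\ell)\subseteq\Z(g)$ and $g\in\SR^{k}(\R^n)$, then $g^{N}=\ell h$ for some $h\in\SR^{k}(\R^n)$; applying $\phi$ and using $\ell_R(\xi)=0$ kills $\phi(g)$. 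So that part of your plan is sound, a single polynomial $\ell$ suffices, and no detour through blowings-up is needed.

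The genuine gap is in your reduction for the case $q_R(\xi)=0$: you propose to subtract from $f$ a regulous function on $\R^n$ matching the boundary values of $f$ along the whole component $V$ of $\pol(f)$ through $\xi$. For $n\ge 3$ such a $V$ can be positive-dimensional and singular; the existence of a regulous extension of $f|_V$ to $\R^n$ is not established, and even granting it you would need the substitution property for that extension, which threatens circularity. The paper sidesteps all of this by localizing at the \emph{point} rather than at the pole component: after reducing to $\xi=o$ real, one subtracts the constant $f(o)$ and applies the \L ojasiewicz property to the pair $f-f(o)$ and $\ell=x_1^2+\cdots+x_n^2$, for which $\Z(\ell)=\{o\}\subseteq\Z(f-f(o))$ holds trivially; no analysis of $\pol(f)$ is needed and your two cases collapse into one. (One caveat, where you actually do better than the paper: the reduction ``assume $\phi(x_i)=0$'' silently restricts to real centers $\xi\in\R^n$, whereas your easy case $q_R(\xi)\neq 0$ handles many non-real centers directly; for non-real $\xi$ with $q_R(\xi)=0$ neither argument is complete as written.)
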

\begin{proof}
One may assume, for simplicity, that $\phi(x_1)=\ldots=\phi(x_n)=0$. 
Let $f\in\SR^{k}(\R^n)$. Up to considering $f-f(o)$, we may assume that $f(o)=0$ and we have to show that $\phi(f)=0$.\par
We use the  \L ojaciewicz property in $\SR^{k}(\R^n)$ (cf \cite[Lemme 4.1]{FHMM}) to the functions $f$ and $x_1^2+\ldots+x_n^2$ which are such that  $\Z( x_1^2+\ldots+x_n^2)\subset\Z(f)$. It says that there exists an integer $N$ and a regulous function $g$ in $\SR^{k}(\R^n)$ such that 
$f^N=(x_1^2+\ldots+x_n^2)g$, an algebraic identity which implies $\phi(f)=0$.
\end{proof}

The previous proposition says that if a morphism from $\SR^{k}(\R^n)$ is the evaluation at a given point $x\in R^n$ in restriction to the polynomials, then it is evaluation at $x$ on any regulous functions of $\R^n$.\par  
Here is now our Artin-Lang property, fondamental to obtain algebraic identities involved in Positivstellens\"atze :

\begin{prop}\label{ArtinLang}
Let $(k,n)\in\N\times\N^*$. Let $f_1,\ldots,f_r$ in $\SR^{k}(\R^n)$ and set $$S=\{x\in\R^n\mid f_1(x)>0,\ldots,f_r(x)>0\},$$
and 
$$\widetilde{S}=\{\alpha\in\Sp_r\SR^{k}(\R^n)\mid f_1(\alpha)>0,\ldots,f_r(\alpha)>0\}.$$
Then,
$\widetilde{S}=\emptyset$ if and only if $S=\emptyset$.
\end{prop}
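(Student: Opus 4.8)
The plan is to prove the non-trivial implication $\widetilde S\neq\emptyset\Rightarrow S\neq\emptyset$, the converse being immediate: any point $x\in S$ yields the evaluation homomorphism $\SR^{k}(\R^n)\to\R,\ f\mapsto f(x)$, hence an order $\alpha_x\in\Sp_r\SR^{k}(\R^n)$ with $f_i(\alpha_x)=f_i(x)>0$ for all $i$, so $\alpha_x\in\widetilde S$. So assume $\widetilde S\neq\emptyset$ and pick $\alpha\in\widetilde S$. By the description of the real spectrum recalled in Section~\ref{RadicPrincip}, $\alpha$ is given by a ring homomorphism $\phi\colon\SR^{k}(\R^n)\to R$ to a real closed field; since $\SR^{k}(\R^n)\supset\R$ and $\phi$ is injective on the subfield $\R$, we may assume that $R$ is a real closed extension of $\R$. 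Set $\xi=(\phi(x_1),\ldots,\phi(x_n))\in R^n$.

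The first step is to turn the abstract inequalities $f_i(\alpha)>0$ into inequalities at an actual point of $R^n$. By Proposition~\ref{SubstitutionRegulue} one has $\phi(f_i)=(f_i)_R(\xi)$ for each $i$, where $(f_i)_R$ is the rational function $f_i$ viewed over $R$; by Proposition~\ref{substitution} this $(f_i)_R$ genuinely lies in $\SR^{k}(R^n)$, hence is a bona fide function on $R^n$. Since $f_i(\alpha)=\phi(f_i)>0$, the point $\xi$ belongs to
$$S_R:=\{y\in R^n\mid (f_1)_R(y)>0,\ldots,(f_r)_R(y)>0\},$$
so $S_R\neq\emptyset$.

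It then remains to descend from $R$ back to $\R$ by the transfer principle. Regulous functions are semialgebraic (\cite[Prop.~3.1]{FHMM}), so by Proposition~\ref{kregsa=sa} the set $S$ is semialgebraic; more precisely, describing each $f_i$ by a first-order formula for its graph (with parameters in $\R$), one obtains a single first-order formula $\Psi(y)$ over $\R$ whose realization in $R^n$ equals $S_R$ for every real closed extension $R$ of $\R$. The statement $S=\emptyset$ reads $\R\models\forall y\,\neg\Psi(y)$, which by Tarski--Seidenberg's theorem \cite{BCR} is equivalent to $R\models\forall y\,\neg\Psi(y)$, i.e.\ to $S_R=\emptyset$. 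As $\xi\in S_R$, we conclude $S\neq\emptyset$.

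The delicate point, hidden in the last paragraph, is to check that the same formula $\Psi$ describes $\{(f_1)_R>0,\ldots,(f_r)_R>0\}$ over \emph{every} real closed extension $R$ at once, that is, that the graph of $(f_i)_R$ is the base change of the graph of $f_i$ along the chosen formula. This is exactly the compatibility between the semialgebraic description of $f_i$ and the assignment $f_i\mapsto (f_i)_R$ (which is independent of the writing $p/q$), established in the same spirit as in the proof of Proposition~\ref{substitution}; once it is granted, the rest is a routine combination of that substitution property with Proposition~\ref{SubstitutionRegulue} and the transfer principle.
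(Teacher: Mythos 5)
Your proof is correct and follows essentially the same route as the paper's: both directions hinge on the substitution property (Proposition \ref{SubstitutionRegulue}) to turn an order $\alpha\in\widetilde S$ into an actual point $\xi\in R^n$ lying in the base change $S_R$ of the semialgebraic set $S$, and then on Tarski--Seidenberg to transfer $S_R\neq\emptyset$ back to $S\neq\emptyset$. The compatibility issue you flag at the end (that a single first-order description of $S$ works uniformly over all real closed extensions) is also present, and equally implicit, in the paper's argument, which writes $S$ as a finite union of basic semialgebraic sets $S_i$ given by polynomials and uses that $S_R=\bigcup_i S_{i,R}$.
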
 
\begin{proof}
Since $S$ is a semialgebraic subset of $\R^n$ then  
there exist $(l,m)\in\N^2$, some
polynomials $p_{i,j}$ and $q_i$, $1\leq i\leq l$, $1\leq j\leq m$ in $\R[x_1,\ldots,x_n]$ such that
$$S=\cup_{i=1}^l S_i$$ with $S_i=\{x\in\R^n\mid
q_i(x)=0,p_{i,1}(x)>0,\ldots, p_{i,m}(x)>0\}$ (see \cite{BCR}).
By Tarski-Seidenberg's Theorem
\cite[Prop. 5.1.4]{BCR}, $S=\emptyset$ if and only if for any real closed extension $R$ of $\R$ and
for any $i\in\{1,\ldots,l\}$, we have $$S_{i,R}=\{x\in R^n\mid
q_i(x)=0,p_{i,1}(x)>0,\ldots, p_{i,m}(x)>0\}=\emptyset.$$
Assume $S=\emptyset$ and $\widetilde{S}\not=\emptyset$. Let $\alpha\in \widetilde{S}$.
Then $\alpha$ corresponds to a morphism
$\phi:\SR^{k}(\R^n)\rightarrow R$ where $R$ is real closed extension
of $\R$. Let $y=(x_1(\alpha),\ldots,x_n(\alpha))=(\phi(x_1),\ldots,\phi(x_n))\in R^n$. Since
$\alpha\in \widetilde{S}$, there exists, by the substitution property
(Proposition \ref{SubstitutionRegulue}), an index $i$ such that $y\in
S_{i,R}$, a contradiction.\par  In other words, the substitution property gives the description of 
$\widetilde{S}$ as the inductive limit of all $S_R=\{x\in R^n\mid f_{1,R}(x)>0,\ldots,f_{r,R}(x)>0\}=\cup_{i=1}^l S_{i,R}$ where the union rides over all the real closed extensions $\R\rightarrow R$.  
\end{proof}

The same arguments can be applied to the ring $\SR^{k}_{[l]}(\R^2)$. First, we have a substitution property:

\begin{prop}\label{SubstitutionRegulueBlow}
Let $(k,l)\in\N^2$. Let $\phi:\Sp_r\SR^{k}_{[l]}(\R^2)\rightarrow R$ be a ring homomorphism where $R$ is a real closed extension of $\R$. For any $f\in\SR^{k}_{[l]}(\R^2)$, one has
$$\phi(f)=f_R\left(\phi(x_1),\phi(x_2)\right)$$
\end{prop}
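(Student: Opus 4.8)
The statement to prove is Proposition \ref{SubstitutionRegulueBlow}: for a homomorphism $\phi:\mathcal R^k_{[l]}(\R^2)\to R$ into a real closed extension $R$ of $\R$, one has $\phi(f)=f_R(\phi(x_1),\phi(x_2))$ for every $f\in\mathcal R^k_{[l]}(\R^2)$. The plan is to imitate the proof of Proposition \ref{SubstitutionRegulue}, but to replace the \L ojasiewicz argument --- which is \emph{not} available in $\mathcal R^k_{[l]}(\R^2)$ --- by the radical principality (``weak \L ojasiewicz'') property of these rings that is announced in the introduction and in Section \ref{RadicPrincip}, together with Theorem \ref{equivfctregapres1eclat}. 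As in the earlier proof, after translating coordinates one may assume $\phi(x_1)=\phi(x_2)=0$ and, replacing $f$ by $f-f(o)$ (still in $\mathcal R^k_{[l]}(\R^2)$ by the ring structure), reduce to showing that $f(o)=0$ implies $\phi(f)=0$.

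\textbf{Key steps.} First I would record that the ideal $\mathfrak m=\{g\in\mathcal R^k_{[l]}(\R^2)\mid g(o)=0\}$ is the kernel of evaluation at $o$, and that its zero set is $\{o\}=\mathcal Z(x_1^2+x_2^2)$. Next, invoke the radical principality of $\mathcal R^k_{[l]}(\R^2)$ (for $l=1$ this is exactly the stated radical principal property; for general $l$ one uses the corresponding Nullstellensatz-type statement of Section \ref{RadicPrincip}) to obtain an algebraic identity of the form $f^N=(x_1^2+x_2^2)\,g$ with $g\in\mathcal R^k_{[l]}(\R^2)$ and $N\in\N$, valid whenever $f$ vanishes at $o$: indeed $f$ lies in the (real) radical of the ideal generated by $x_1^2+x_2^2$, and here the radical ideal is principal, generated by $x_1^2+x_2^2$ itself. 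Applying $\phi$ to this identity gives $\phi(f)^N=(\phi(x_1)^2+\phi(x_2)^2)\,\phi(g)=0$ since $\phi(x_1)=\phi(x_2)=0$, and as $R$ is a domain this forces $\phi(f)=0$, which is what we wanted. Finally, for a general $f$ one writes $\phi(f)=\phi(f-f(o))+f(o)=0+f_R(\mathbf 0)$, and $f_R(\mathbf 0)=f(o)$ because $f_R$ agrees with $f$ on $\R^2$; more generally, after the coordinate translation at the outset, this yields $\phi(f)=f_R(\phi(x_1),\phi(x_2))$ as claimed.

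\textbf{Main obstacle.} The delicate point is precisely the substitute for \L ojasiewicz: one must have, for $\mathcal R^k_{[l]}(\R^2)$, an algebraic identity $f^N\in(x_1^2+x_2^2)$ inside the ring when $f$ vanishes at the origin. For $l=1$ this is guaranteed by the radical principality of $\mathcal R^k_{[1]}(\R^2)$; one has to check that $x_1^2+x_2^2$ generates a radical ideal there (equivalently, that the only point of its zero locus, the origin, imposes no extra flatness obstruction --- which follows because $x_1^2+x_2^2$ is locally positive definite at $o$, so Theorem \ref{equivfctregapres1eclat} keeps the relevant quotients in $\mathcal R^k_{[1]}(\R^2)$). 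For $l>1$ one leans instead on the \emph{real} Nullstellensatz established for $\mathcal R^k_{[l]}(\R^2)$ in Section \ref{RadicPrincip}, replacing the exact identity by $f^{2m}+\sum b_i^2\in(x_1^2+x_2^2)$; applying $\phi$ still gives $\phi(f)^{2m}+\sum\phi(b_i)^2=0$ in the real closed field $R$, hence $\phi(f)=0$. So in all cases the argument closes, the only subtlety being to quote the correct radical-type statement for the value of $l$ at hand.
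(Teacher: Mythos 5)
Your reduction and your treatment of the case $l=1$ are essentially sound and match the paper's mechanism: the paper applies the \L ojasiewicz property in the \emph{ambient} ring $\SR^{k}(\R^2)$ to get $f^N=(x_1^2+x_2^2)g$ with $g\in\SR^{k}(\R^2)$, and then observes that this very identity forces $g\in\SR^{k}_{[l]}(\R^2)$ (for $l=1$ this is exactly your parenthetical remark: the denominator of $g$ picks up only the factor $x_1^2+x_2^2$, which is locally positive definite, so Theorem \ref{equivfctregapres1eclat} applies). Two small inaccuracies in your phrasing for $l=1$: the ideal $(x_1^2+x_2^2)$ is \emph{not} radical in $\SR^{k}_{[1]}(\R^2)$ (that is not what you need --- you only need $f\in\Rad\bigl((x_1^2+x_2^2)\bigr)$, i.e.\ $f^N=(x_1^2+x_2^2)g$ with $g$ in the ring); and radical principality (Theorem \ref{radicprincipdim2}) is an \emph{existential} statement in the generator --- as the paper warns just before Theorem \ref{Lojaregblowdim2}, one cannot take an arbitrary $f\in I$ with $\Z(f)=\Z(I)$ --- so you must argue, as you in fact do, via \L ojasiewicz in $\SR^k(\R^2)$ plus Theorem \ref{equivfctregapres1eclat} that the specific generator $x_1^2+x_2^2$ works.

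The genuine gap is your fallback for $l>1$. You propose to use the real Nullstellensatz for $\SR^{k}_{[l]}(\R^2)$ (Theorem \ref{realnullstellensatz}) to produce $f^{2m}+\sum b_i^2\in(x_1^2+x_2^2)$. But that theorem is proved from the Artin--Lang property (Proposition \ref{ArtinLangbis}), which is itself derived from the substitution property you are currently trying to establish; the argument is therefore circular. Nor does the paper prove radical principality of $\SR^{k}_{[l]}(\R^2)$ for $l>1$, so that route is unavailable too. The correct repair is the paper's uniform argument: obtain $f^N=(x_1^2+x_2^2)g$ with $g\in\SR^{k}(\R^2)$ from \L ojasiewicz in the bigger ring, and then verify directly from this identity that $g=f^N/(x_1^2+x_2^2)$ still lies in $\SR^{k}_{[l]}(\R^2)$ --- the only indeterminacy created by dividing by $x_1^2+x_2^2$ is at the origin, and it is resolved by a single blowing-up of the origin, which can be absorbed into the first stage of an $l$-stage tower resolving $f$. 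Once the identity lives in $\SR^{k}_{[l]}(\R^2)$, applying $\phi$ gives $\phi(f)^N=0$, hence $\phi(f)=0$, as you conclude.
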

\begin{proof}
The case $l=0$ is trivial since in this case $\SR^{k}_{[0]}(\R^2)$ is the ring of regular functions and any morphism $\R[x_1,x_2]\rightarrow R$ clearly admits a unique factorization through $\SR^{k}_{[0]}(\R^2)$.\par
For any $l\not=0$, the proof is quite the same as the proof of Proposition \ref{SubstitutionRegulue} although we just emphasize a point to take into account. 
Indeed, we use the  \L ojaciewicz property in $\SR^{k}(\R^2)$ since there is no \L ojaciewicz property in $\SR^{k}_{[l]}(\R^2)$ (cf section \ref{Radicalprincipality}) ! We get then an identity
$f^N=(x_1^2+x_2^2)g$ where, a priori, $g\in\SR^{k}(\R^2)$. But, it is clear from that identity that in fact $g\in\SR^{k}_{[l]}(\R^2)$ and the conclusion follows. 
\end{proof}

We derive likewise the Artin-Lang property in $\SR_{[l]}^{k}(\R^2)$:
\begin{prop}\label{ArtinLangbis}
Let $(k,l)\in\N^2$. Let $f_1,\ldots,f_r$ in $\SR_{[l]}^{k}(\R^2)$ and set $$S=\{x\in\R^n\mid f_1(x)>0,\ldots,f_r(x)>0\},$$
and 
$$\widetilde{S}=\{\alpha\in\Sp_r\SR_{[l]}^{k}(\R^2)\mid f_1(\alpha)>0,\ldots,f_r(\alpha)>0\}.$$
Then,
$\widetilde{S}=\emptyset$ if and only if $S=\emptyset$.
\end{prop}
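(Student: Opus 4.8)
The plan is to mimic the proof of Proposition \ref{ArtinLang} line by line, since the only two ingredients used there are the substitution property (now available as Proposition \ref{SubstitutionRegulueBlow}) and the fact that the sets cut out by strict inequalities of functions in the ring are semialgebraic, together with Tarski--Seidenberg. So the argument is essentially a routine transcription once those inputs are in place for $\SR^k_{[l]}(\R^2)$.

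First I would record that the set $S=\{x\in\R^2\mid f_1(x)>0,\ldots,f_r(x)>0\}$ is a semialgebraic subset of $\R^2$. This follows from Proposition \ref{kregsa=sa} (a boolean combination of positivity loci of $k$-regulous functions is semialgebraic), applied to the elements $f_1,\ldots,f_r$ of $\SR^k_{[l]}(\R^2)\subseteq\SR^k(\R^2)$. Hence we may write $S=\bigcup_{i=1}^{l'} S_i$ with $S_i=\{x\in\R^2\mid q_i(x)=0,\ p_{i,1}(x)>0,\ldots,p_{i,m}(x)>0\}$ for suitable polynomials $q_i,p_{i,j}\in\R[x_1,x_2]$. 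By Tarski--Seidenberg's Theorem \cite[Prop. 5.1.4]{BCR}, $S=\emptyset$ if and only if for every real closed extension $R$ of $\R$ and every $i$, the set $S_{i,R}=\{x\in R^2\mid q_i(x)=0,\ p_{i,1}(x)>0,\ldots,p_{i,m}(x)>0\}$ is empty.

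Next I would argue by contraposition, exactly as before. One implication is trivial: if $S=\{x\in\R^2\mid f_j(x)>0\}\neq\emptyset$, a point of $S$ gives an order of $\SR^k_{[l]}(\R^2)$ (evaluation at that point), so $\widetilde S\neq\emptyset$. For the converse, suppose $\widetilde S\neq\emptyset$ and pick $\alpha\in\widetilde S$, corresponding to a morphism $\phi:\SR^k_{[l]}(\R^2)\to R$ with $R$ real closed over $\R$. Set $y=(\phi(x_1),\phi(x_2))\in R^2$. By the substitution property (Proposition \ref{SubstitutionRegulueBlow}), $f_j(\alpha)=\phi(f_j)=f_{j,R}(y)$ for each $j$, and also $q_i(y),p_{i,j}(y)$ are the images under $\phi$ of the corresponding polynomials. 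Since $\alpha\in\widetilde S$ we have $f_{j,R}(y)>0$ for all $j$, so $y\in S_R=\bigcup_i S_{i,R}$; hence some $S_{i,R}\neq\emptyset$, and by Tarski--Seidenberg $S\neq\emptyset$, completing the proof.

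There is essentially no obstacle here: the one point worth a sentence of care—already flagged inside the proof of Proposition \ref{SubstitutionRegulueBlow}—is that the \L ojasiewicz property fails in $\SR^k_{[l]}(\R^2)$, so the substitution property is proved using the \L ojasiewicz property of the larger ring $\SR^k(\R^2)$ and then observing that the auxiliary function $g$ in the identity $f^N=(x_1^2+x_2^2)g$ automatically lies back in $\SR^k_{[l]}(\R^2)$. Since that subtlety is absorbed into Proposition \ref{SubstitutionRegulueBlow}, which I am entitled to invoke, the Artin--Lang statement for $\SR^k_{[l]}(\R^2)$ drops out with no new difficulty.
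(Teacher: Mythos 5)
Your proposal is correct and follows exactly the route the paper intends: the paper gives no separate proof of this proposition, stating only that it is derived "likewise" from the substitution property (Proposition \ref{SubstitutionRegulueBlow}) by repeating the argument of Proposition \ref{ArtinLang}, which is precisely what you do. The inputs you invoke — semialgebraicity via Proposition \ref{kregsa=sa} applied to $\SR^k_{[l]}(\R^2)\subseteq\SR^k(\R^2)$, Tarski--Seidenberg, and the substitution property — are exactly the ones the paper relies on.
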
 

\subsection{Radical ideals and Nullstellensatz}

In \cite{FHMM}, they proved the weak Nullstellensatz for the ring
$\SR^{k}(\R^n)$ with $(k,n)\in(\N\cup\{\infty\})\times \N^*$ and the
strong Nullstellensatz for the ring $\SR^{k}(\R^n)$ with
$(k,n)\in\N\times \N^*$.
Since their proof of the weak Nullstellensatz (\cite[Prop. 5.23]{FHMM})
for the ring
$\SR^{k}(\R^n)$ 
uses the \L ojasiewicz property, we give here a slightly different
one which has the advantage to be also valid in the ring $\SR^{k}_{[l]}(\R^2)$ .

\begin{prop} (Weak Nullstellensatz for  $\SR^{k}(\R^n)$)\\
\label{weak1}
Let $(k,n)\in\N\times\N^*$.  Let $I\subset \SR^{k}(\R^n)$ be an
ideal. Then, $\Z(I)=\emptyset$ if and only if $I= \SR^{k}(\R^n)$.
\end{prop}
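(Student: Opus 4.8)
The plan is to prove the non-trivial implication: if $\Z(I)=\emptyset$ then $I=\SR^k(\R^n)$, i.e. $1\in I$. The strategy is to produce from finitely many generators of $I$ a single element of $I$ that vanishes nowhere on $\R^n$, and then to show that such an element is a unit in $\SR^k(\R^n)$.

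First I would reduce to a finite setting. Since $\SR^k(\R^n)$ is noetherian (this follows from the noetherianity of the regulous topology, already used in the excerpt), the ideal $I$ is finitely generated, say $I=(f_1,\ldots,f_r)$. The hypothesis $\Z(I)=\emptyset$ means the regulous functions $f_1,\ldots,f_r$ have no common zero on $\R^n$. Consider then $g=f_1^2+\cdots+f_r^2\in I$. By construction $\Z(g)=\Z(f_1)\cap\cdots\cap\Z(f_r)=\Z(I)=\emptyset$, so $g$ is a $k$-regulous function that is strictly positive at every point of $\R^n$.

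The heart of the argument is then to show that a $k$-regulous function $g$ with $\Z(g)=\emptyset$ is invertible in $\SR^k(\R^n)$; once this is done, $1=g^{-1}g\in I$ and we are finished. Write $g=p/q$ on $\dom(g)$ with $p,q$ coprime polynomials. By Lemma~\ref{lemme1} we have $\Z(q)\subseteq\Z(p)$, and since $qg=p$ as regulous functions, at any point of $\Z(q)$ we would get $p=0$; but $g$ never vanishes, so in fact $\Z(q)=\Z(p)=\emptyset$, forcing $q$ to be a nonvanishing polynomial and hence $g$ to be regular (with $\dom(g)=\R^n$) and nowhere zero. Then $1/g=q/p$ is a rational function regular on all of $\R^n$, hence $C^\infty$, hence in particular $C^k$ and rational, so $1/g\in\SR^k(\R^n)$. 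The converse implication is trivial: if $I=\SR^k(\R^n)$ then $1\in I$ vanishes nowhere, so $\Z(I)=\emptyset$.

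The main obstacle, and the only delicate point, is the step showing that $\Z(g)=\emptyset$ already forces the denominator $q$ to be nonvanishing; this is where one must use the restriction identity $qg=p$ in $\SR^k(\R^n)$ together with Lemma~\ref{lemme1}, rather than appealing to the \L ojasiewicz property as in \cite{FHMM}. Note the argument is purely formal and uses nothing about $q$ having constant sign or about blowings-up, which is precisely why it will carry over verbatim to $\SR^k_{[l]}(\R^2)$: there too, a $k$-regulous function with empty zero set has nonvanishing denominator, hence is regular, hence is a unit, and regularity is automatically compatible with being regular after $l$ blowings-up since the identity blowing-up already works.
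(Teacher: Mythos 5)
Your overall strategy --- produce a single element of $I$ with empty zero set and show it is a unit --- is the same as the paper's, but two of your steps are incorrect. First, the noetherianity of the $k$-regulous \emph{topology} does not make $\SR^{k}(\R^n)$ a noetherian \emph{ring} (and in fact $\SR^{k}(\R^n)$ is not noetherian for $n\geq 2$), so you cannot take a finite generating set of $I$. What topological noetherianity does give, and what the paper uses, is finitely many $f_1,\ldots,f_m\in I$ with $\Z(I)=\Z(f_1)\cap\cdots\cap\Z(f_m)$ (the family of closed sets $\{\Z(f)\mid f\in I\}$ has a minimal finite intersection); then $g=f_1^2+\cdots+f_m^2\in I$ has $\Z(g)=\emptyset$ even though the $f_i$ need not generate $I$. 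This repair is harmless for the present statement.

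The second issue is a genuine logical error in what you call the heart of the argument. From $\Z(g)=\emptyset$ you cannot conclude $\Z(q)=\Z(p)=\emptyset$: Lemma \ref{lemme1} gives the inclusions $\Z(q)\subseteq\Z(p)$ and $\Z(g)\subseteq\Z(p)$, not their converses, so emptiness of $\Z(g)$ says nothing about $\Z(p)$. Concretely, $g=1+\bigl(\tfrac{x^3}{x^2+y^2}\bigr)^2=\tfrac{(x^2+y^2)^2+x^6}{(x^2+y^2)^2}$ is a regulous function on $\R^2$ with $g\geq 1$ everywhere, written with coprime numerator and denominator, yet both vanish at the origin and $g$ is \emph{not} regular there. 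So a nowhere-vanishing regulous function need not have a nonvanishing denominator, and your route to invertibility fails. The conclusion you need is nevertheless true, for a simpler reason: if $g\in\SR^{k}(\R^n)$ and $\Z(g)=\emptyset$, then $1/g$ is again a rational function and is of class $C^k$ on all of $\R^n$ (reciprocal of a nowhere-vanishing $C^k$ function), hence $1/g\in\SR^{k}(\R^n)$ directly from Definition \ref{def1}. The same observation, applied also on the blown-up model where $g\circ\pi$ is regular and nowhere zero, gives invertibility in $\SR^{k}_{[l]}(\R^2)$, so the transfer you describe in your last sentence does go through once the unit argument is corrected.
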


\begin{proof}  Since the $k$-regulous topology is noetherian
  \cite[Thm. 4.3]{FHMM} (a result which does not require \L ojasiewicz property), there
  exists a finite number of $f_i\in I$, $i=1,\ldots,m$, such that
  $\Z(I)=\Z(f_1)\cap\cdots\cap \Z(f_m)$. Hence, $\Z(I)=\Z(f)$ with
  $f=f_1^2+ \cdots +f_m^2\in I$. If we suppose $\Z(I)=\emptyset$ then
  $f$ is a unit and thus $I= \SR^{k}(\R^n)$. The converse implication
  is clear.
\end{proof}

The same proof gives:
\begin{prop} (Weak Nullstellensatz for $\SR_{[l]}^{k}(\R^2)$)\\
\label{weak2}
Let $(k,l)\in\N^2$.  Let $I\subset \SR_{[l]}^{k}(\R^2)$ be an
ideal. Then, $\Z(I)=\emptyset$ if and only $I= \SR_{[l]}^{k}(\R^2)$.
\end{prop}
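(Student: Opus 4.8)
The plan is to adapt verbatim the proof of Proposition \ref{weak1}, checking only that each ingredient used there remains available in the setting of $\SR^{k}_{[l]}(\R^2)$. First I would recall that the $k_{[l]}$-regulous topology on $\R^2$ is noetherian (Proposition in Section \ref{topo}, itself a consequence of the noetherianity of the regulous topology from \cite{FHMM}), and that for any $k_{[l]}$-regulous closed set $F$ there is a single function $f\in\SR^{k}_{[l]}(\R^2)$ with $\Z(f)=F$.

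Then, given an ideal $I\subset \SR_{[l]}^{k}(\R^2)$, noetherianity of the topology provides finitely many $f_1,\dots,f_m\in I$ with $\Z(I)=\Z(f_1)\cap\cdots\cap\Z(f_m)$. Setting $f=f_1^2+\cdots+f_m^2$, one has $f\in I$ (the ring is closed under sums and squares) and $\Z(f)=\Z(I)$. If $\Z(I)=\emptyset$ then $f$ vanishes nowhere on $\R^2$; since $f$ is continuous and hence, being a nonvanishing element of $\SR^{0}_{[1]}(\R^2)\supseteq\SR^{k}_{[l]}(\R^2)$ restricted appropriately, its reciprocal $1/f$ is a rational function with empty indeterminacy locus and is still regular after $l$ blowings-up (wherever $f$ is regular so is $1/f$, and composing with the same $\pi$ that regularizes $f$ keeps $1/(f\circ\pi)$ regular since $f\circ\pi$ is a nonvanishing regular function), so $1/f\in\SR^{k}_{[l]}(\R^2)$. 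Hence $f$ is a unit and $I=\SR_{[l]}^{k}(\R^2)$. The converse is trivial since $\Z(\SR_{[l]}^{k}(\R^2))=\Z(1)=\emptyset$.

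The only point requiring a word of care — and thus the main (mild) obstacle — is the claim that the reciprocal of a nowhere-vanishing $f\in\SR^{k}_{[l]}(\R^2)$ again lies in $\SR^{k}_{[l]}(\R^2)$: one must observe that $1/f$ is $C^k$ on $\R^2$ (quotient rule, denominator bounded away from $0$ on compacta, and $f$ being $C^k$) and that if $\pi$ is an $l$-stage composition of point blowings-up with $f\circ\pi$ regular, then $f\circ\pi$ is a nowhere-vanishing regular function on $M$, so $1/(f\circ\pi)=(f\circ\pi)^{-1}$ is regular on $M$ as well; thus $1/f$ is regular after $l$ blowings-up. With this in hand the argument is identical to that of Proposition \ref{weak1}, so the statement follows.
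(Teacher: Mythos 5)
Your argument is exactly the paper's: the paper proves this by observing that the proof of its weak Nullstellensatz for $\SR^k(\R^n)$ (noetherianity of the topology, reduce to a single $f=f_1^2+\cdots+f_m^2\in I$ with $\Z(f)=\Z(I)$, conclude $f$ is a unit) carries over verbatim, and your extra verification that $1/f$ stays in $\SR^{k}_{[l]}(\R^2)$ is a correct filling-in of the step the paper leaves implicit. One cosmetic slip: the inclusion $\SR^{k}_{[l]}(\R^2)\subseteq\SR^{0}_{[1]}(\R^2)$ you mention in passing is false for $l\geq 2$, but your actual justification (continuity and the quotient rule for the $C^k$ part, regularity of $1/(f\circ\pi)$ for the blowing-up part) does not use it.
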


From the weak
Nullstellensatz, one may deduce the description of the maximal ideals 
in $\SR^{k}(\R^n)$ (resp. $\SR^{k}_{[l]}(\R^2)$). For $a\in\R^n$
(resp. $a\in\R^2$), let us denote $\I (a)=\{f\in \SR^{k}(\R^n)\mid
f(a)=0\}$ (resp. $\I (a)=\{f\in \SR_{[l]}^{k}(\R^2)\mid
f(a)=0\}$). 

\begin{prop}
\label{max1} 
Let $(k,n)\in\N\times\N^*$. Let $I\subset \SR^{k}(\R^n)$ be an
ideal. Then, $I$ is maximal if and only if there exists $a\in \R^n$
such that $I=\I (a)$.
\end{prop}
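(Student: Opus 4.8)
The plan is to prove Proposition \ref{max1} by the standard argument that deduces the classification of maximal ideals from the weak Nullstellensatz. First I would establish the easy direction: for a fixed $a\in\R^n$, the evaluation map $\SR^{k}(\R^n)\to\R$, $f\mapsto f(a)$, is a well-defined ring homomorphism (evaluation is legitimate since $k$-regulous functions are in particular continuous on all of $\R^n$, hence defined at $a$), and it is surjective because constants are $k$-regulous; its kernel is exactly $\I(a)$, so $\SR^{k}(\R^n)/\I(a)\cong\R$ is a field and $\I(a)$ is maximal.

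For the converse, let $I$ be a maximal ideal of $\SR^{k}(\R^n)$. Since $I$ is proper, $I\neq\SR^{k}(\R^n)$, so by the Weak Nullstellensatz (Proposition \ref{weak1}) we have $\Z(I)\neq\emptyset$; pick a point $a\in\Z(I)$. Then every $f\in I$ vanishes at $a$, i.e. $I\subseteq\I(a)$. But $\I(a)$ is a proper ideal (it does not contain the constant function $1$) and $I$ is maximal, so $I=\I(a)$. This gives the desired $a\in\R^n$.

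I do not expect a genuine obstacle here: the only points requiring a word of care are that evaluation at a point is a ring homomorphism on $\SR^{k}(\R^n)$ — which is immediate from continuity of $k$-regulous functions and the fact that they form a ring under pointwise operations — and that $\Z(I)$ is nonempty for proper $I$, which is precisely the content of Proposition \ref{weak1} already proved above. The same argument verbatim, substituting Proposition \ref{weak2} for Proposition \ref{weak1} and $\R^2$ for $\R^n$, would yield the analogous classification of maximal ideals in $\SR^{k}_{[l]}(\R^2)$, so I would state that parallel result (or remark that the proof is identical) immediately afterwards.
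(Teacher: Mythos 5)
Your proof is correct and follows exactly the paper's argument: the easy direction via the evaluation homomorphism (the paper merely says $\I(a)$ is "clearly maximal"), and the converse via the weak Nullstellensatz (Proposition \ref{weak1}) to get $a\in\Z(I)$, hence $I\subseteq\I(a)$ and equality by maximality. The remark about the parallel statement for $\SR^{k}_{[l]}(\R^2)$ also matches the paper, which records it as a separate proposition with "the same proof".
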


\begin{proof} 
Let $a\in\R^n$, then $\I(a)$ is clearly maximal. Conversely, assume
$I$ is maximal. Since $I$ is proper then $\Z(I)\not=\emptyset$
by Proposition \ref{weak1}. Let $a\in \Z(I)$, then we have $I\subset\I(a)$ which gives $I=\I(a)$ by maximality of $I$.
\end{proof}

The same proof gives:
\begin{prop}
\label{max2} 
Let $(k,l)\in\N^2$. Let $I\subset \SR_{[l]}^{k}(\R^2)$ be an
ideal. Then, $I$ is maximal if and only if there exists $a\in \R^2$
such that $I=\I (a)$.
\end{prop}

The key tool to obtain the strong Nullstellensatz for the ring $\SR^{k}(\R^n)$, with
$(k,n)\in\N\times \N^*$, is the property that a radical ideal is real. It has been proved in
\cite[Prop. 5.7]{FHMM}. 

\begin{lem} \label{radreal}
Let $(k,n)\in\N\times\N^*$. Let $I$ be a radical
ideal in $\SR^{k}(\R^n)$. Then $I$ is a real ideal. 
\end{lem}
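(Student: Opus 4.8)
The plan is to argue directly: given a radical ideal $I\subset\SR^{k}(\R^n)$ and a relation $a_1^2+\cdots+a_p^2\in I$, I want to show that every $a_i$ belongs to $I$. By symmetry it suffices to treat $a_1$, and since $I$ is radical it suffices to produce an integer $N$ with $a_1^{N}\in I$; one may assume $a_1\neq 0$, since otherwise there is nothing to prove.

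First I would set $s=a_1^2+\cdots+a_p^2\in I$ and observe that $\Z(s)=\Z(a_1)\cap\cdots\cap\Z(a_p)$, so in particular $\Z(s)\subseteq\Z(a_1)$. Consequently the rational function $a_1/s$ is defined on $\R^n\setminus\Z(a_1)$, where it is a quotient of two $C^k$ functions whose denominator does not vanish; hence $a_1/s$ is $k$-regulous on $\R^n\setminus\Z(a_1)$. Next I would apply the \L ojasiewicz property \cite[Lem. 5.1]{FHMM} to $f=a_1\in\SR^{k}(\R^n)$ and $g=a_1/s$: it provides an integer $N$ such that the function $a_1^{N}\cdot(a_1/s)=a_1^{N+1}/s$, extended by zero on $\Z(a_1)=\Z(f)$, is a genuine element $u$ of $\SR^{k}(\R^n)$. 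Multiplying by $s$, the regulous functions $s\,u$ and $a_1^{N+1}$ agree on the dense open set $\R^n\setminus\Z(a_1)$, hence everywhere by continuity; thus $a_1^{N+1}=s\,u\in I$ because $s\in I$. Since $I$ is radical, $a_1\in I$, and repeating the argument for $a_2,\ldots,a_p$ shows that $I$ is real.

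The step requiring care is the verification that $a_1/s$ is $k$-regulous away from $\Z(a_1)$, that is, the bookkeeping on zero sets placing $\Z(s)$ inside $\Z(a_1)$ so that the denominator genuinely does not vanish on the open set in question, together with the right choice of input for the \L ojasiewicz property: one feeds it $a_1$ itself rather than $s$, which is precisely what converts the trivial bound $a_1^2/s\le 1$ near $\Z(s)$ into an honest divisibility $a_1^{N+1}\in(s)$. Everything else (density of $\R^n\setminus\Z(a_1)$ when $a_1\neq0$, the fact that two regulous functions agreeing on a dense set coincide, stability of $\SR^{k}(\R^n)$ under the operations used) is routine. I would also note that this scheme is specific to $\SR^{k}(\R^n)$, since it rests on the \L ojasiewicz property — exactly the ingredient that fails in $\SR^{k}_{[l]}(\R^2)$ — which is why, in the later sections, the real Nullstellensatz for $\SR^{k}_{[l]}(\R^2)$ must be obtained by a different argument.
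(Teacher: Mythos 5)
Your proof is correct. Note that the paper does not actually write out an argument for this lemma: it cites \cite[Prop.~5.7]{FHMM} and remarks that two routes exist, one ``deduced immediately from the \L ojasiewicz property'' and another, the cited one, based on composition of $k$-regulous maps (\cite[Cor.~4.14, Thm.~4.1]{FHMM}). What you have written is a complete and accurate instantiation of the first route: from $\Z(s)\subseteq\Z(a_1)$ with $s=\sum a_i^2\in I$ you observe that $a_1/s$ (taking simply $1/s$ would also do) is $k$-regulous on $\R^n\setminus\Z(a_1)$, and the \L ojasiewicz property converts this into the divisibility $a_1^{N+1}=su\in(s)\subseteq I$, whence $a_1\in I$ by radicality. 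One small simplification: since $u$ is the extension of $a_1^{N+1}/s$ by zero on $\Z(a_1)$ and $a_1^{N+1}$ also vanishes there, the identity $su=a_1^{N+1}$ holds pointwise everywhere, so the density-plus-continuity step is not even needed. The trade-off between the two routes is worth keeping in mind: yours is shorter and self-contained given \cite[Lem.~5.1]{FHMM}, while the composition-based proof of \cite{FHMM} avoids \L ojasiewicz altogether, which matters for the authors' broader program of isolating which statements genuinely require that property. Your closing observation --- that this scheme is tied to $\SR^{k}(\R^n)$ and cannot transfer to $\SR^{k}_{[l]}(\R^2)$, where \L ojasiewicz fails --- is exactly why the paper must settle for a \emph{real} Nullstellensatz in that setting, and indeed radical ideals of $\SR^{k}_{[1]}(\R^2)$ need not be real, as the example $I=\Rad{(x^2+y^4)}$ shows.
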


One proof of this fact can be deduced immediately from the \L
ojaciewicz property. 
Another proof, from \cite[Prop. 5.7]{FHMM} is based on the fact
that we can compose $k$-regulous maps \cite[Cor. 4.14]{FHMM} which is
a consequence of \cite[Thm. 4.1]{FHMM}. 
For a given $f\in\SR^{k}(\R^n)$, Theorem 4.1 in \cite{FHMM} ensures
the existence of finite stratification of $\R^n$ in Zariski locally
closed subsets $S_i\subset \R^n$ 
such that the restriction of $f$ to each $S_i$ is regular. The theorem
can be proved using Hironaka's theory of resolution of singularities
but do not require the use of \L ojasiewicz property. \par

One may now give a proof of the Nullstellensatz which does not use the
\L ojasiewicz property :

\begin{thm} (Nullstellensatz for $\SR^{k}(\R^n)$)\\
\label{nullstellensatz}
Let $(k,n)\in\N\times\N^*$ and $I$ be an ideal in $\SR^{k}(\R^n)$. Then, $$\Rad{I}=\I(\Z(I)).$$
\end{thm}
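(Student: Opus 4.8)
The plan is to deduce the Nullstellensatz from the three ingredients already assembled: the weak Nullstellensatz (Proposition \ref{weak1}), the fact that radical ideals are real (Lemma \ref{radreal}), and the Artin--Lang property (Proposition \ref{ArtinLang}), following the classical scheme used for rings of polynomial, Nash or analytic functions. One inclusion is formal: if $g \in \Rad I$, then $g^m \in I$ for some $m$, so $g$ vanishes wherever all elements of $I$ vanish, i.e. $g \in \I(\Z(I))$; hence $\Rad I \subseteq \I(\Z(I))$.

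For the reverse inclusion, I would argue by the real spectrum. Let $f \in \I(\Z(I))$. By Noetherianity of the $k$-regulous topology, $\Z(I) = \Z(g)$ for a single $g = f_1^2 + \cdots + f_m^2 \in I$. I want to show $f \in \Rad I$, and by Lemma \ref{radreal} together with \cite[Lem. 4.1.5]{BCR} it suffices to show $f$ lies in every real prime ideal $\mathfrak p$ containing $I$ — equivalently, that $f$ vanishes at every order $\alpha \in \Sp_r \SR^k(\R^n)$ with $I \subseteq \supp(\alpha)$. Fix such an $\alpha$, given by a morphism $\phi : \SR^k(\R^n) \to R$ to a real closed field. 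Consider, for a small $\varepsilon > 0$, the semialgebraic-type set cut out in the real spectrum by $g(\alpha) \le 0$ together with the finitely many conditions expressing that $f$ does not vanish; more directly, apply the Artin--Lang property (Proposition \ref{ArtinLang}) to the functions witnessing $f^2 > 0$ and $-g \ge 0$. Since $\Z(f) \supseteq \Z(g) = \Z(I)$ and in fact $\Z(g) \subseteq \Z(f)$, the set $\{x \in \R^n \mid f(x)^2 > 0,\ g(x) \le 0\}$ is empty (here one may need to replace $\le$ by a strict inequality and use that $g$ has constant sign, or absorb the closed condition into the finite-union description of semialgebraic sets used in the proof of Proposition \ref{ArtinLang}). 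By Artin--Lang the corresponding constructible subset of $\Sp_r \SR^k(\R^n)$ is empty, so at the order $\alpha$ (which satisfies $g(\alpha) = 0$ because $g \in I \subseteq \supp(\alpha)$) we must have $f(\alpha) = 0$. As $\alpha$ was an arbitrary order containing $I$ in its support, $f$ lies in every real prime over $I$, hence $f \in \RadRe{I} = \Rad I$ by Lemma \ref{radreal}.

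The delicate point — and the one I would write out carefully — is the passage through the real spectrum: making precise that "$f$ vanishes on $\Z(I)$" forces "$f(\alpha) = 0$ for every order $\alpha$ with $I \subseteq \supp(\alpha)$". The subtlety is that an order need not be an evaluation at a classical point of $\R^n$, so one genuinely needs Artin--Lang (which, via the substitution property Proposition \ref{SubstitutionRegulue}, identifies the relevant constructible set in $\Sp_r \SR^k(\R^n)$ with the inductive limit of its realizations over all real closed extensions $R$) rather than a naive pointwise argument. One must also handle the inequality $g \le 0$: since the denominator of the regulous function $g$ may be chosen of constant sign and $\Z(g) = \Z(I)$, the set $\{g \le 0, f^2 > 0\}$ is semialgebraic and empty, which is exactly the hypothesis needed to feed into Proposition \ref{ArtinLang}. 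Once this is in place, the theorem follows, and the very same argument transfers verbatim to $\SR^k_{[l]}(\R^2)$ using Propositions \ref{weak2} and \ref{ArtinLangbis}, yielding there at least the real Nullstellensatz.
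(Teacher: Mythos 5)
Your proof is correct and follows essentially the same route as the paper: the trivial inclusion, noetherianity to replace $I$ by a single $g\in I$ with $\Z(g)=\Z(I)$, the Artin--Lang property to transfer the emptiness of a (constructible, not purely strict-inequality) set to $\Sp_r\SR^k(\R^n)$, and Lemma \ref{radreal} to identify the real radical with the radical. The only cosmetic difference is that the paper extracts an explicit identity $\sum h_i^2+g^{2e}+fh=0$ via the formal Positivstellensatz \cite[Prop. 4.4.1]{BCR}, whereas you invoke the equivalent description of $\RadRe{I}$ as the intersection of the real primes containing $I$; the extension of Proposition \ref{ArtinLang} to conditions involving equalities that you flag is likewise used (implicitly) by the paper and is harmless, since its proof via the substitution property handles arbitrary semialgebraic conditions.
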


\begin{proof}
The inclusion $\Rad{I}\subset\I(\Z(I))$ is trivial.

By noetherianity of the $k$-regulous topology, we get the existence of
$f\in I$ such that $\Z(f)=\Z(I)$. Let $g\in \I(\Z(I))$. We have
$\Z(f)\subset \Z(g)$. By hypothesis
$$\{x\in\R^n|\,\,f(x)=0,\,g(x)\not=0\}=\emptyset.$$ Then by Proposition \ref{ArtinLang}, 
$$\{\alpha\in\Sp_r \SR^{k}(\R^n)|\,\,f(\alpha)=0,\,
g(\alpha)\not=0\}=\emptyset$$
By the formal Positivstellensatz \cite[Prop. 4.4.1]{BCR}, there
exist $h_1,\ldots,h_m,h\in \SR^{k}(\R^n)$ and $e\in\N^*$ such that 
$$\sum_{i=1}^m h_i^2+g^{2e}+fh=0.$$ Hence $g$
belongs to the real radical of the ideal $(f)$. By Lemma \ref{radreal}, $\RadRe{(f)}=\Rad{(f)}\subset\Rad{I}$.
\end{proof}

We complete the understanding of radical ideals in continuation of Lemma \ref{radreal}
by proving that a non-trivial principal ideal of
$\SR^{k}(\R^n)$ is never radical.

\begin{prop}
\label{princnotrad}
Let $(k,n)\in\N\times\N^*$ and $f\in \SR^{k}(\R^n)$ such that $(f)$ is neither the zero ideal nor the whole ring. Then, the principal ideal generated by $f$ is not radical.
\end{prop}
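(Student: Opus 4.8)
Let $(k,n)\in\N\times\N^*$ and $f\in\SR^k(\R^n)$ such that $(f)$ is neither zero nor the whole ring. Then $(f)$ is not radical.

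My plan is to exhibit an explicit element $g$ in $\Rad{(f)}$ (equivalently, by the Nullstellensatz just proved, in $\I(\Z(f))$) that does not lie in $(f)$ itself. The natural candidate is a function vanishing on $\Z(f)$ but "to first order only", so it cannot be divisible by $f$ inside the ring.

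\medskip

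\noindent\textbf{Setting up.} Write $f=\frac{p}{q}$ on $\dom(f)$ with $p,q$ coprime polynomials, $q$ not vanishing on $\dom(f)$. By Lemma \ref{lemme1}, $\Z(f)\subset\Z(p)$, and in fact $\Z(f)=\Z(p)\setminus\pol(f)$ away from the (small) indeterminacy locus; more precisely $\Z(p)$ is the Zariski closure of $\Z(f)$ together with possibly some points of $\pol(f)$. Since $(f)\neq 0$ we have $f\not\equiv 0$, so $\Z(f)$ is a proper Zariski-closed (indeed regulous-closed) subset of $\R^n$; since $(f)$ is not the whole ring, $\Z(f)\neq\emptyset$ by the weak Nullstellensatz (Proposition \ref{weak1}). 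Pick a point $a\in\Z(f)$, and after a translation assume $a=o$.

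\medskip

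\noindent\textbf{The key step: producing $g$.} The idea is that $f$ vanishes at $o$ with some order $\geq 1$ as a regulous function, while the candidate $g$ should vanish to order exactly $1$ along the relevant branch, forcing $g/f$ to have a pole. Concretely: by the Nullstellensatz (Theorem \ref{nullstellensatz}), $\I(\Z(f))=\Rad{(f)}$, so it suffices to find $g\in\SR^k(\R^n)$ with $\Z(f)\subset\Z(g)$ but $g\notin(f)$. I would take $g$ to be (a $k$-regulous modification of) a polynomial $\ell$ that vanishes on the Zariski closure $W$ of $\Z(f)$ but whose gradient does not vanish identically on $W$ — such an $\ell$ exists, e.g. one can take $\ell$ among the generators of $\I(W)$ that is not in $\I(W)^2$, or simply, if $\dim W = n-1$ pick $\ell$ a reduced defining equation of a component. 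Then, supposing $g=hf$ for some $h\in\SR^k(\R^n)$, write $h=\frac{r}{s}$ with $r,s$ coprime; the identity $g s q = h f s q = r p$ (cleared of denominators appropriately) together with the coprimality of $p,q$ and of $r,s$ forces a divisibility relation among polynomials that contradicts the fact that $g$ vanishes to order exactly one along a component of $W$ while $f$ has positive order there: comparing orders at a generic point of that component of $W$, the order of $hf$ is at least the order of $f$, which is $\geq 1$, but one can arrange $g$ to have order exactly $1$ there. Actually, cleaner: if $\Z(f)$ has a component of codimension one with reduced equation $\ell$, then $\ell\in\I(\Z(f))=\Rad{(f)}$; I claim $\ell^N\in(f)$ for some $N$ but $\ell\notin(f)$. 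For the latter, if $\ell=hf$ with $h=r/s$ coprime, then $\ell s q=rp$ as polynomials (since $q$ has no zeros on $\dom f$ and $s$ has none on $\dom h$, these are genuine polynomial identities after noting $qf=p$, $sh=r$); as $p,q$ coprime we get $q\mid \ell s$, and as $\ell$ is reduced and, generically along its zero set, $f=p/q$ has a well-defined value, comparing multiplicities of $\ell$ on both sides forces $\mathrm{mult}_\ell(rp)=1$ while $\mathrm{mult}_\ell(p)=\ord(f\text{ along }\ell)+\mathrm{mult}_\ell(q)\geq 1+0$; a contradiction unless $f$ is a unit times $\ell$, which would make $(f)=(\ell)=\I(\Z(f))$ radical — but one checks directly that even $(\ell)$ need not be radical in $\SR^k$, or alternatively one handles the codimension $\geq 2$ case separately.

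\medskip

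\noindent\textbf{The main obstacle.} The delicate point is the case analysis on the geometry of $\Z(f)$ — in particular when $\Z(f)$ has no codimension-one component (e.g. $\Z(f)$ is a single point in $\R^2$, as for $f=x^2+y^2$), so there is no obvious reduced hypersurface equation to use as $g$. In that situation I would instead take $g$ to be a coordinate-type function vanishing at the isolated zero but of strictly smaller order than $f$: if $\ord_o f = d$, pick $g$ vanishing at order $1$ (and at all other points of $\Z(f)$, using a product), which lies in $\I(\Z(f))=\Rad{(f)}$ by the Nullstellensatz but cannot be written $hf$ since any such $h$ would need $\ord_o h = 1-d < 0$, impossible for a regulous (hence everywhere-defined, continuous) function — here one invokes that $k$-regulous functions are continuous on all of $\R^n$, so have nonnegative order at every point. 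Assembling these cases, possibly after reducing to a single zero point of $\Z(f)$ by multiplying a local $g$ by a suitable regulous function supported away from the other zeros (using Proposition \ref{kregsa=sa} / partition-type arguments, or simply the \L ojasiewicz property to glue), yields $g\in\Rad{(f)}\setminus(f)$, proving $(f)$ is not radical.
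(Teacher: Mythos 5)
Your overall strategy --- exhibit $g\in\I(\Z(f))=\Rad{(f)}$ with $g\notin(f)$ via the Nullstellensatz --- is the right one, and your treatment of the zero-dimensional case is essentially sound: it matches the paper's, which likewise plays an order-$1$ function against $\ord_a(f)\geq 2$ at each isolated zero (you should, as the paper does, justify that the order of $f$ at an isolated zero is even, hence at least $2$; this is where $n\geq 2$ enters). The genuine gap is in the positive-dimensional case. First, your multiplicity computation does not produce the claimed contradiction: from $\ell s q=rp$ with $\ell\nmid q$ and $\ell\nmid s$ (their zero sets have codimension $\geq 2$) and $\ell\mid p$, you only conclude $\mathrm{mult}_\ell(r)=0$ and $\mathrm{mult}_\ell(p)=1$, which is perfectly consistent --- indeed $\ell\in(f)$ does occur, e.g.\ for $f=\ell$ itself. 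Your fallback, ``one checks directly that $(\ell)$ need not be radical,'' is precisely the statement to be proved in that sub-case and is left unproved (a concrete check would be that $\frac{x_1x_2^2}{x_1^2+x_2^2}$ vanishes on $\Z(x_1)$ but is not divisible by $x_1$ in $\SR^0(\R^2)$, since the quotient has no limit at the origin). Second, your dichotomy ``codimension-one component'' versus ``isolated zeros'' omits positive-dimensional zero sets of codimension $\geq 2$ (e.g.\ a curve in $\R^3$), for which neither of your two witnesses is available.

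The paper sidesteps all of this in the positive-dimensional case with a different device: pick a non-isolated point $o\in\Z(f)$, set $q=x_1^2+\cdots+x_n^2$, and use the \L ojasiewicz property to choose $N$ minimal with $f^N/q\in\SR^{k}(\R^n)$; since $o$ is not isolated, $f^N/q$ vanishes on all of $\Z(f)$, hence would lie in $(f)$ if $(f)$ were radical, giving $f^{N-1}/q\in\SR^{k}(\R^n)$ and contradicting the minimality of $N$. You would need either this argument or a correct replacement for your witness in all non-zero-dimensional cases before the proof is complete.
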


\begin{proof}
Suppose $(f)$ is radical.
By hypothesis, $\Z(f)\not=\emptyset$.

First assume $\dim (\Z(f))\geq 1$. We may suppose that the origin $o$
of $\R^n$ is not an isolated point in
$\Z(f)$. Let $q=x_1^2+\cdots +x_n^2$. By \L ojasiewicz property \cite[Lem. 5.1]{FHMM}, there
exists a $N\geq 1$ such that $\frac{f^N}{q}\in \SR^{k}(\R^n)$. We
can choose $N$ minimal for the previous property. Since $o$ is not
isolated in $\Z(f)$, we have $\frac{f^N}{q}(o)=0$. Thus,
$\frac{f^N}{q}\in\I (\Z(f))$. By Theorem \ref{nullstellensatz} and since the ideal $(f)$ is radical by assumption, we have $\I
(\Z(f))=(f)$ and we get an identity $f^N=fqh$ with $h\in
\SR^{k}(\R^n)$ i.e. $h=\frac{f^{N-1}}{q}\in \SR^{k}(\R^n)$. This is
imposssible by our choice of $N$.

Assume now $\dim (\Z(f))=0$ i.e. $\Z(f)=\cup_{i=1}^m {a_i}$ is a
disjoint union of points $a_i=(a_{i,1},\ldots,
a_{i,n})\in\R^n$.
Let $q=\prod_{i=1}^m (\sum_{j=1}^n
(x_j-a_{i,j})^2)\in\R[x_1,\ldots,x_n]$. Since $q\in \I(\Z(f))$,
using Theorem \ref{nullstellensatz}  and the hypothesis $\Rad{(f)}=(f)$, we get an identity  
$q=fh$ with $h\in \SR^{k}(\R^n)$. For $i=1,\ldots,m$, we have
$\ord_{a_i}(q)=2$, $\ord_{a_i}(f)$ is even and thus
$h(a_i)\not= 0$ and $\ord_{a_i}(f)=2$. Since $\Z(h)\subset
\Z(q)=\Z(f)$ then $h$ is a unit in $\SR^{k}(\R^n)$. Hence we may
assume $f=q$. 
For $i=1,\ldots,m$, let $H_i$ be an hyperplane given by the linear
equation $h_i=0$ such that $a_i\in H_i$ and $a_j\not\in H_i$
for $i\not= j$. Since $h_1\cdots h_m\in \I(\Z(q))$, using again
Theorem \ref{nullstellensatz}, we get $\frac{h_1\cdots h_m}{q}\in
\SR^{k}(\R^n)$. This is impossible since $\ord_{a_i}(h_1\cdots h_m)=1$ and thus $\frac{h_1\cdots
  h_m}{q}$ is not continuous at $a_i$.
\end{proof}

On the contrary to Lemma \ref{radreal}, in the ring $\SR^{k}_{[l]}(\R^2)$ we do not have in general $\Rad{I}=\RadRe{I}$. We give below some counterexamples which also disprove the Nullstellensatz in $\SR^{k}_{[l]}(\R^2)$. 

\begin{ex}
For $l=0$, the ring $\SR^{k}_{[l]}(\R^2)$ is the ring $\SR^{\infty}(\R^2)$ of regular functions on
$\R^2$. The ideal $I=(x^2+y^2)$ is prime and hence radical $I=\Rad I$. And one can cheek that $x\in \I(\Z(I))$ but $x\not\in
I$. Namely $\Rad{I}\not=\I(\Z(I))$.
\par
\par
For $l=1$, let us consider $I=(x^2+y^4)$ in $\SR^{k}_{[1]}(\R^2)$. We have
$x\in \I(\Z(I))$ but $x\not\in \Rad I$. Indeed, suppose
$x\in\Rad I$, then there exists $N\in\N$ such that $x^N\in
(x^2+y^4)$ i.e. the continuous extension of the rational function
$\frac{x^N}{x^2+y^4}$ belongs to $\SR^{k}_{[1]}(\R^2)$, a
contradiction. Thus we get $\Rad{I}\not=\I(\Z(I))$.
Let $J=\Rad{I}$. Since
$x\not\in J$ (proved above), the radical ideal $J$ of
$\SR^{k}_{[1]}(\R^2)$
is not real.\par
For $l>1$, it suffices to consider the ideal $I=(x^2+y^{2(l+1)})\quad\square$.
\end{ex}

Nevertheless, one may state a real Nullstellensatz.

\begin{thm} (Real Nullstellensatz for $\SR^{k}_{[l]}(\R^2)$)\\
\label{realnullstellensatz}
Let $(k,l)\in\N^2$ and $I$ be an
ideal in $\SR^{k}_{[l]}(\R^2)$. Then,
$$\RadRe{I}=\I(\Z(I)).$$
In particular $\Rad{I}=\I(\Z(I))$ if $I$ is a real ideal.
\end{thm}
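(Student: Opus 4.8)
The plan is to mimic the proof of the Nullstellensatz (Theorem \ref{nullstellensatz}), replacing the classical formal Positivstellensatz argument by the real version, and to be careful that the \L ojasiewicz property is \emph{not} available in $\SR^{k}_{[l]}(\R^2)$ so we must route through the Artin--Lang property (Proposition \ref{ArtinLangbis}) and the weak Nullstellensatz (Proposition \ref{weak2}) instead. As always the inclusion $\RadRe{I}\subset\I(\Z(I))$ is trivial: if $a^{2m}+b_1^2+\cdots+b_p^2\in I$ then evaluating at any $x\in\Z(I)$ forces $a(x)^{2m}+\sum b_i(x)^2=0$, hence $a(x)=0$.

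For the reverse inclusion, first I would use the noetherianity of the $k_{[l]}$-regulous topology together with the fact (established just before) that any $k_{[l]}$-regulous closed set is the zero set of a single function of $\SR^{k}_{[l]}(\R^2)$: this gives $f\in I$ with $\Z(f)=\Z(I)$. Take $g\in\I(\Z(I))$, so $\Z(f)\subseteq\Z(g)$, i.e.
$$\{x\in\R^2\mid f(x)=0,\ g(x)\neq 0\}=\emptyset.$$
By Proposition \ref{ArtinLangbis} (applied to the functions $f$, $-f$ and $g^2$, or equivalently by the general semialgebraic form of Artin--Lang as used in Theorem \ref{nullstellensatz}), we deduce
$$\{\alpha\in\Sp_r\SR^{k}_{[l]}(\R^2)\mid f(\alpha)=0,\ g(\alpha)\neq 0\}=\emptyset.$$
Now the ring $\SR^{k}_{[l]}(\R^2)$ contains $\QQ$, so the formal (abstract) Positivstellensatz \cite[Prop. 4.4.1]{BCR} applies verbatim: the emptiness of that constructible subset of the real spectrum yields $h_1,\ldots,h_m,h\in\SR^{k}_{[l]}(\R^2)$ and $e\in\N^*$ with
$$\sum_{i=1}^m h_i^2+g^{2e}+fh=0.$$
This algebraic identity says precisely that $g\in\RadRe{(f)}\subseteq\RadRe{I}$, which finishes the proof. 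The final sentence ``$\Rad I=\I(\Z(I))$ if $I$ is real'' is then immediate from $I\subseteq\Rad I\subseteq\RadRe I$ with equality on the right when $I$ is a real radical ideal, combined with $\I(\Z(I))=\RadRe I$ just proved and the fact that $\I(\Z(I))$ is radical.

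The only genuinely delicate point — and the place where this proof departs from the reasoning valid in $\SR^{k}(\R^n)$ — is that we cannot conclude $\RadRe{(f)}=\Rad{(f)}$: Lemma \ref{radreal} fails here, as the counterexamples with $I=(x^2+y^4)$ show. So unlike Theorem \ref{nullstellensatz}, where one upgrades $\RadRe{(f)}$ to $\Rad{(f)}$ and hence to $\Rad I$, here we must be content with landing in $\RadRe I$, and the statement is correspondingly a \emph{real} Nullstellensatz rather than a full one. Everything else is a routine transcription of the $\SR^{k}(\R^n)$ argument, the key substitutions being Proposition \ref{weak2} for Proposition \ref{weak1} and Proposition \ref{ArtinLangbis} for Proposition \ref{ArtinLang}; one should just double-check that the noetherianity used is that of the $k_{[l]}$-regulous topology, already recorded above.
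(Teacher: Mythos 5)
Your proof is correct and follows essentially the same route as the paper: Artin--Lang for $\SR^{k}_{[l]}(\R^2)$ (Proposition \ref{ArtinLangbis}) followed by the formal Positivstellensatz \cite[Prop. 4.4.1]{BCR}, stopping at $\RadRe{I}$ since the upgrade to $\Rad$ via Lemma \ref{radreal} is unavailable. The only cosmetic difference is that you first reduce to a single $f\in I$ with $\Z(f)=\Z(I)$ (as in the proof of Theorem \ref{nullstellensatz}), whereas the paper applies the argument directly to the condition ``$g(\alpha)=0$ for all $g\in I$'' and obtains the identity $f^{2m}+\sum h_i^2+g=0$ with $g\in I$; both land in the same place.
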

\begin{proof}
Let $f\in \I(\Z(I))$. By hypothesis we have $$\{x\in\R^2\mid \forall g\in I,g(x)=0\;{\rm and}\;f(x)\not=0\}=\emptyset.$$
By Proposition \ref{ArtinLangbis}, we get $$\{\alpha\in \Sp_r\SR^{k}_{[l]}(\R^2)\mid \forall g\in I,g(\alpha)=0\;{\rm and}\;f(\alpha)\not=0\}=\emptyset.$$
Then, using the formal Positivstellensatz \cite[Prop. 4.4.1]{BCR}, one gets an identity
$$f^{2m}+\sum_{i=1}^rh_i^2+g=0$$
where the $h_i$'s are in  $\SR^{k}_{[l]}(\R^2)$ and $g$ in $I$.
This exactly says that $f\in\RadRe{I}$ and hence shows the non-obvious inclusion 
$\I(\Z(I))\subset\RadRe{I}.$ 
\end{proof}



As a consequence of the formal Positivstellensatz, one deduces more generally :
\begin{thm}\label{PSSRegulous}
Let $(k,n)\in\N\times\N^*$.
Let $f,g_1,\ldots,g_r,h_1,\ldots,h_s$ in $\SR^{k}(\R^n)$ such that $f\geq 0$ on the set $\{x\in\R^n\mid g_1(x)=0,\ldots,g_r(x)=0,h_1(x)\geq 0,\ldots,h_s(x)\geq 0\}$.
Then, there exists an identity 
$$c_1f=c_2+d$$
where $d$ lies in the ideal generated by the $g_i$'s and $c_1,c_2$ are
in the cone generated by the $h_i$'s. Namely $c_1$ and $c_2$ are sums
of elements of the form $sh_1^{\epsilon_1}\ldots h_s^{\epsilon_s}$ where
$s$ is a sum of squares in $\SR^{k}(\R^n)$ and the $\epsilon_i$'s are in $\{0,1\}$.
\end{thm}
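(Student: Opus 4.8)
The plan is to deduce the identity from the formal Positivstellensatz \cite[Prop. 4.4.1]{BCR} applied to the ring $A=\SR^{k}(\R^n)$, after transferring the sign hypothesis from $\R^n$ to the real spectrum $\Sp_r A$ by means of the Artin-Lang property.

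First I would reformulate the hypothesis: saying that $f\geq 0$ on $\{g_1=\cdots=g_r=0,\ h_1\geq 0,\ldots,h_s\geq 0\}$ is the same as saying that the semialgebraic subset
$$S=\{x\in\R^n\mid g_1(x)=0,\ldots,g_r(x)=0,\ h_1(x)\geq 0,\ldots,h_s(x)\geq 0,\ -f(x)>0\}$$
of $\R^n$ is empty. The proof of Proposition \ref{ArtinLang} uses only that regulous functions are semialgebraic, the substitution property (Proposition \ref{SubstitutionRegulue}) and Tarski-Seidenberg's theorem, so it applies verbatim to a basic set cut out by finitely many equalities and (weak or strict) inequalities; applying it to $S$ yields that the constructible subset
$$\widetilde S=\{\alpha\in\Sp_r A\mid g_1(\alpha)=0,\ldots,g_r(\alpha)=0,\ h_1(\alpha)\geq 0,\ldots,h_s(\alpha)\geq 0,\ -f(\alpha)>0\}$$
of $\Sp_r A$ is empty.

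Then I would feed this emptiness into \cite[Prop. 4.4.1]{BCR}, in its incompatibility form, taking $-f$ as the strictly positive element, $h_1,\ldots,h_s$ as the non-negative ones and $g_1,\ldots,g_r$ as the vanishing ones. It produces an identity
$$f^{2j}+\sigma+\delta=0$$
for some $j\in\N$, some $\delta$ in the ideal of $A$ generated by $g_1,\ldots,g_r$ and some $\sigma$ in the cone generated by $-f,h_1,\ldots,h_s$, that is, a finite sum of terms $s\,(-f)^{\epsilon_0}h_1^{\epsilon_1}\cdots h_s^{\epsilon_s}$ with $s$ a sum of squares in $A$ and the exponents $\epsilon_i$ in $\{0,1\}$. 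Grouping the summands of $\sigma$ according to whether $\epsilon_0=0$ or $\epsilon_0=1$, I would write $\sigma=\sigma_0-f\sigma_1$ with $\sigma_0,\sigma_1$ in the cone generated by $h_1,\ldots,h_s$ alone; the identity then reads $f\sigma_1=f^{2j}+\sigma_0+\delta$, so it suffices to set $c_1=\sigma_1$, $c_2=f^{2j}+\sigma_0$ (still in the cone generated by the $h_i$, since $f^{2j}=(f^j)^2$ is a sum of squares) and $d=\delta$.

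I do not expect a serious obstacle. The only point needing care is the first transfer step, since Proposition \ref{ArtinLang} is stated for finitely many strict inequalities only; but its proof really establishes the statement for an arbitrary semialgebraic description, so the adaptation is immediate. The rest is the standard translation between the incompatibility form of the Positivstellensatz and a fractional identity of the shape $c_1 f=c_2+d$.
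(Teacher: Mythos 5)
Your proposal is correct and follows essentially the same route as the paper: transfer the emptiness of the basic semialgebraic set to the real spectrum via the Artin--Lang property (Proposition \ref{ArtinLang}), invoke the formal Positivstellensatz of \cite[Prop. 4.4.1]{BCR}, and rearrange the resulting identity. The only (harmless) difference is cosmetic: you read off $c_1=\sigma_1$, $c_2=f^{2j}+\sigma_0$, $d=\delta$ directly, whereas the paper multiplies the identity by $f$ once more to take $c_1=f^{2m}+w$, $c_2=f^2v$, $d=-fu$; both yield an identity of the required shape.
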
 
\begin{proof} 
By assumption,
$$\{x\in\R^n\mid f(x)\not=0,g_1(x)=0,\ldots,g_r(x)=0,-f(x)\geq 0,h_1(x)\geq 0,\ldots,h_s(x)\geq 0\}=\emptyset.$$

Hence, by Proposition \ref{ArtinLang}
$$\begin{array}{rcl}
\{\alpha\in\Sp_r\SR^{k}(\R^n)&\mid& f(\alpha)\not=0,g_1(\alpha)=0,\ldots,g_r(\alpha)=0,\\
&&\\
&&-f(\alpha)\geq 0,h_1(\alpha)\geq 0,\ldots,h_s(\alpha)\geq 0\}=\emptyset.
\end{array}$$

Then, by the formal Positivstellensatz \cite[Prop. 4.4.1]{BCR}, we have an identity
$$f^{2m}+u-fv+w=0$$
where $m$ is an integer, $u$ is in the ideal generated by the
$g_i$'s and $v,w$ are in the cone generated by the $h_i$'s in $\SR^{k}(\R^n)$. Hence, we get
$$f(f^{2m}+w)=f^2v-fu$$
and we are done with $c_1=f^{2m}+w$, $c_2=f^2v$ and
$d=-fu$.
\end{proof}

\begin{rem} \label{PSSRegulousbis}The statement of the previous theorem remains valid with the ring
  $\SR^{k}_{[l]}(\R^2)$ instead of $\SR^{k}(\R^n)$.
\end{rem}

\subsection{Radical principality and \L ojasiewicz property in $\SR^k(\R^n)$ and $\SR^k_{[1]}(\R^2)$}\label{Radicalprincipality}

We recall that a commutative ring $A$ is said to be radically principal if for any
ideal $I\subset A$, there exists $f\in I$ such that $\Rad{(f)}=\Rad{I}$.

As previously mentioned, in \cite{FHMM} a direct application of \L
ojasiewicz inequality shows a \L ojasiewicz property that can be formulated as:

\begin{thm} \cite[Thm. 5.21]{FHMM}
\label{radprinc}
Let $(k,n)\in\N\times\N^*$. The ring
$\SR^{k}(\R^n)$ is radically principal.
\end{thm}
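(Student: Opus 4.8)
The plan is to reduce radical principality to the existence of a single $f\in I$ with $\Z(f)=\Z(I)$, and then to derive $\Rad{(f)}=\Rad{I}$ from the \L ojasiewicz property \cite[Lem. 5.1]{FHMM}. First I would build $f$ exactly as in the proof of the weak Nullstellensatz (Proposition \ref{weak1}): since the $k$-regulous topology is noetherian \cite[Thm. 4.3]{FHMM}, the closed set $\Z(I)=\bigcap_{g\in I}\Z(g)$ equals a finite subintersection $\Z(f_1)\cap\cdots\cap\Z(f_m)$ with $f_1,\ldots,f_m\in I$, so that $f:=f_1^2+\cdots+f_m^2\in I$ satisfies $\Z(f)=\Z(I)$. (If $f=0$ then $I=(0)$ and there is nothing to do, so I may assume $f\neq 0$.)

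Since $f\in I$, the inclusion $\Rad{(f)}\subseteq\Rad{I}$ is automatic, and it remains to prove $\Rad{I}\subseteq\Rad{(f)}$; for this it suffices to check that each $g\in I$ satisfies $g^M\in(f)$ for some $M\in\N$. This is where the \L ojasiewicz property enters. Fix $g\in I$; then $\Z(f)=\Z(I)\subseteq\Z(g)$. As $f$ is of class $C^k$ and nowhere vanishing on $\R^n\setminus\Z(f)$, the rational function $1/f$ is $k$-regulous on $\R^n\setminus\Z(f)$, hence a fortiori on the smaller open set $\R^n\setminus\Z(g)$. I would then apply the \L ojasiewicz property \cite[Lem. 5.1]{FHMM} with $g$ playing the role of the ``base'' function and $1/f$ the role of the function which is regulous off its zero set: it produces an integer $M$ such that the function $h$ equal to $g^M/f$ on $\R^n\setminus\Z(g)$ and to $0$ on $\Z(g)$ is $k$-regulous on $\R^n$. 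Then $g^M=fh$ holds off $\Z(g)$, and also on $\Z(g)$ since both sides vanish there, so $g^M=fh\in(f)$ and $g\in\Rad{(f)}$. This gives $\Rad{I}\subseteq\Rad{(f)}$, hence $\Rad{(f)}=\Rad{I}$, and $\SR^{k}(\R^n)$ is radically principal.

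The only substantial ingredient is the \L ojasiewicz property \cite[Lem. 5.1]{FHMM}, and the delicate point is not the continuous \L ojasiewicz inequality $|g|^M\le C|f|$ (which would only give continuity of $h$) but the assertion that $h$ is genuinely of class $C^k$. That is exactly the step I expect to be unavailable in $\SR^{k}_{[1]}(\R^2)$, where the \L ojasiewicz property fails, so the radical principality of $\SR^{k}_{[1]}(\R^2)$ treated later in the paper will require a different argument. As a remark, once the Nullstellensatz (Theorem \ref{nullstellensatz}) is granted the proof collapses to the single chain $\Rad{(f)}=\I(\Z(f))=\I(\Z(I))=\Rad{I}$ with the same $f$, so that \L ojasiewicz is used only implicitly, through the Nullstellensatz.
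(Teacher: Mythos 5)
Your proof is correct and follows exactly the route the paper indicates: the theorem is quoted from \cite{FHMM} as ``a direct application of the \L ojasiewicz property,'' and your argument (take $f\in I$ with $\Z(f)=\Z(I)$ by noetherianity, then apply \cite[Lem. 5.1]{FHMM} to $g$ and $1/f$ to get $g^M=fh$) is precisely the content recorded in Remark \ref{radprinc+}. Your closing observations, that this step is what fails in $\SR^k_{[1]}(\R^2)$ and that the result also follows formally from Theorem \ref{nullstellensatz}, agree with the paper's subsequent discussion.
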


\begin{rem} By the previous theorem and Proposition
  \ref{princnotrad}, we know that a non-trivial radical ideal of $\SR^{k}(\R^n)$
  is never principal but is still the radical of a principal
  ideal.
\end{rem}

In fact, looking at the proof of \cite[Thm. 5.21]{FHMM}, one has even more :

\begin{rem}\label{radprinc+}
If $I$ is an ideal of $\SR^{k}(\R^n)$, then for any $f\in I$ such that $\Z(f)=\Z(I)$ and any $g$ in $I$, there exist an integer $N$ and $h\in\SR^{k}(\R^n)$ such that $g^N=fh$. 
\end{rem}

Beware that this last Remark is clearly false in
$\SR_{[1]}^0(\R^2)$. For instance take $I=(x, x^2+y^4)$, $g=x$,
$f=x^2+y^4$. Nevertheless, we shall prove a weaker version of this \L
ojaciewicz property replacing the universal quantifier by an
existential one. Namely, we will prove that there exists a convenient
$f\in I$ satisfying $\Z(f)=\Z(I)$ such that, for any $g\in I$, a \L
ojasiewicz property exists.

\begin{ex} Let $I$ be the ideal generated by $x$ and $y$ in
  $\SR_{[1]}^0(\R^2)$. Let $f=x^2+y^2$, then $f\in I$ and
  $\Z(f)=\Z(I)$. Now take $g\in I$. By \L ojasiewicz property there
  exist an integer $N$ and $h\in\SR^{0}(\R^2)$ such that $g^N=fh$. By
  Theorem \ref{equivfctregapres1eclat}, $h=\frac{g^N}{x^2+y^2}\in
  \SR_{[1]}^0(\R^2)$.
\end{ex}
  
Let us start with some settings.
\begin{defn}
For an ideal $I$ of $\SR^k_{[1]}(\R^2)$, we denote by $N_I\subset \R[x,y]$ the ideal of numerators of $I$, namely
$$N_I=\left\langle p\in\R[x,y]\mid\exists q\in\R[x,y]\setminus\{0\},\;\frac{p}{q}\in I \right\rangle.$$
\end{defn}

Note that if $N_I=(p_1,\ldots,p_n)$ in $\R[x,y]$, then the great commun divisor $d_I$ of the ideal $N_I$ exists and satisfies $d_I=\gcd(p_1,\ldots,p_n)=\gcd(N_I)$, since $\R[x,y]$ is a Unique Factorization Domain. Moreover $N_I\subset (d_I)$ but the reverse inclusion is not necessarily satisfied since $\R[x,y]$ is not a principal domain.\par 
Note also that, viewed in $\SR^k_{[l]}(\R^2)$, we clearly have the inclusion $N_I\subset I$. In the following, we often write an element $f\in I$ in a standard form as  
$f=\frac{p}{q}=\frac{d_I a}{q}$ where $p,q,a$ are in $\R[x,y]$ and $p$ and $q$ are coprime.

\air
We recall from Definition \ref{definite} that a polynomial $p$ is said
to be locally positive definite at a point $a\in\R^n$ if the
homogeneous component of lowest degree of $p$ at the point $a$ is
positive definite.

For the following we will need 

\begin{prop}\label{ZerosIdealNumerators}
Let $I$ be an ideal of $\SR_{[1]}^k(\R^2)$ and $d=d_I$ be the gcd of the associated ideal of numerators $N_I$ in $\R[x,y]$.
Then, 
 there is an $f$ in $I$ such that $\Z(f)=\Z(I)$ and $f=d^2\frac{p}{q}$
 where $p,q$ are polynomials in $\R[x,y]$ and $p$ is locally positive
 definite everywhere and $\Z(p)$ is a finite set.
\end{prop}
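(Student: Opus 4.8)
The plan is to build $f$ in three movements: first extract a convenient polynomial multiple of $d^2$ lying in $I$, then repair it so that after factoring out $d^2$ the remaining numerator becomes locally positive definite everywhere, and finally correct the zero set at the finitely many points where the repair would otherwise destroy the equality $\Z(f)=\Z(I)$. To start, by noetherianity of the $k_{[1]}$-regulous topology there is $h_0\in I$ with $\Z(h_0)=\Z(I)$; write it in lowest terms $h_0=\frac{p_0}{q_0}$, so that by Theorem~\ref{equivfctregapres1eclat} $q_0$ is positive on $\R^2$ and locally positive definite at each point of the finite set $\pol(h_0)$. Since $\R[x,y]$ is a unique factorisation domain, fix polynomial generators $p_1,\dots,p_s$ of $N_I$ and write $p_i=d\,a_i$ with $d=d_I$ and $\gcd(a_1,\dots,a_s)=1$; then $F_0:=\Z(a_1)\cap\dots\cap\Z(a_s)$ is zero-dimensional, hence finite. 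The polynomial $g:=p_1^2+\dots+p_s^2=d^2a$, where $a:=a_1^2+\dots+a_s^2\ge 0$, lies in $N_I\subseteq I$ and satisfies $\Z(g)=\Z(d)\cup F_0$, so that $\Z(I)\subseteq\Z(d)\cup F_0$.

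Next I would show that the defect set $E:=(\Z(d)\cup F_0)\setminus\Z(I)$ is finite. If $b\in E$, choose $\varphi=\frac uv\in I$ in lowest terms with $\varphi(b)\neq0$; its numerator $u$ lies in $N_I=d\cdot(a_1,\dots,a_s)$, hence is divisible by $d$ and vanishes on $F_0$, so $u(b)=0$, which forces $v(b)=0$, i.e.\ $b\in\pol(\varphi)$. By Lemma~\ref{lem-polaire} and Theorem~\ref{equivfctregapres1eclat} the lowest homogeneous components at $b$ of $u$ and of $v$ are definite and proportional, whence by Lemma~\ref{productdefinitepolynomials} the lowest homogeneous component of $d$ at $b$ is definite and $b$ is an isolated zero of $d$. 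As $F_0$ is finite, $E$ is finite.

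Then I would assemble the function. For each $b\in F_0$ choose an element $\Psi_b$ of $J:=(a_1,\dots,a_s)$ (for instance a suitable product $a_ia_j$ of two generators) whose square, when added, raises the lowest homogeneous component of the numerator at $b$ to a positive definite one; for each $c=(c_1,c_2)\in E$ put $\psi_c:=\prod_{c'\in E\setminus\{c\}}\bigl((x-c'_1)^2+(y-c'_2)^2\bigr)\ge0$, which vanishes on $E\setminus\{c\}$ but not at $c$; and set
\[ f\;:=\;g\;+\;\sum_{b\in F_0}\varepsilon_b\,(d\,\Psi_b)^2\;+\;\sum_{c\in E}\delta_c\,h_0^2\,\psi_c \]
for small $\varepsilon_b,\delta_c>0$. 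Then $f\in I$ because $d\Psi_b\in N_I\subseteq I$ and $h_0\in I$, and $f\ge0$. Since $\Z(I)\setminus\Z(d)\subseteq F_0$, every $d\Psi_b$ vanishes on $\Z(I)$; each bump $h_0^2\psi_c$ vanishes on $\Z(I)$ and is nonzero precisely at the point $c$; hence an intersection-of-zero-sets computation gives $\Z(f)=\Z(I)$. Finally, using $p_0^2=d^2a_0^2$ with $a_0:=p_0/d$, one rewrites $f=d^2\dfrac{P}{Q}$ with $Q:=q_0^2$ and $P:=q_0^2\bigl(a+\sum_b\varepsilon_b\Psi_b^2\bigr)+a_0^2\sum_c\delta_c\psi_c\in\R[x,y]$, whose zero set is automatically finite, being contained in $\Z(q_0)\cup F_0$.

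The hard part is proving that $P$ is locally positive definite at every point of $\Z(q_0)\cup F_0$. At such a point $b$ one has to pin down the lowest homogeneous component of $P$ by comparing the orders at $b$ of $q_0^2a$, of the $q_0^2\Psi_b^2$ and of the $a_0^2\psi_c$; the local positive definiteness of $q_0$ (Theorem~\ref{equivfctregapres1eclat}) and Lemma~\ref{productdefinitepolynomials} control the contributions of $q_0^2$, and the parameters $\varepsilon_b$ are chosen so that $\varepsilon_b\Psi_b^2$ dominates the non-definite directions of the lowest component of $a$ at $b$. The genuinely delicate case — which governs the choice of the $\Psi_b$ — is a point $b\in F_0$ at which the lowest homogeneous components of all the $a_i$ of minimal order share a common real direction: there no square of an element of $J$ can cure the lowest component at $b$, and one must instead prove that such a $b$ necessarily lies in the defect set $E$, so that it is already removed from $\Z(f)$ by the factor $q_0^2$ together with the corresponding bump. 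Settling this dichotomy, and then carrying out the homogeneous-component bookkeeping at each of the finitely many points of $\Z(q_0)\cup F_0$, is where the real work of the proof lies.
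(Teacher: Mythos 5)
Your overall strategy --- noetherianity to control the zero set, a sum of squares of numerators to force the factor $d^2$, and an analysis of lowest-degree homogeneous components to obtain local positive definiteness --- is close in spirit to the paper's, and several pieces are sound: the finiteness of the defect set $E$, the bump functions ensuring $\Z(f)=\Z(I)$, and the rewriting $f=d^2P/Q$ with $\Z(P)$ finite. But the proof is not complete: the local positive definiteness of $P$, which you correctly identify as the heart of the matter, is only sketched, and the mechanism you propose for the ``delicate case'' fails. The culprit is the term $g=\sum p_i^2=d^2\sum a_i^2$. At a point $b\in F_0\cap\Z(I)$ the lowest homogeneous component of $\sum a_i^2$ is the sum of the squares of the lowest components of the $a_i$ of minimal order $\mu$ at $b$; if these share a common real zero direction this form is not definite, and since every summand of your $f$ is non-negative nothing can cancel it --- a term $\varepsilon_b(d\Psi_b)^2$ helps only if $J$ contains an element of order at most $\mu$ at $b$ whose lowest component does not vanish on that direction. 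Your claimed dichotomy, that otherwise $b$ must lie in $E$ and hence outside $\Z(I)$, is false. Take $I$ generated by $x$ and $y^2$ in $\SR^0_{[1]}(\R^2)$: then $\Z(I)=\{(0,0)\}$, $d=1$, $F_0=\{(0,0)\}$ and $E=\emptyset$, and one checks (using that denominators of elements of $\SR^0_{[1]}(\R^2)$ have even order and locally positive definite lowest component at their zeros, so that every numerator of $f_1x+f_2y^2$ of order one at the origin has lowest component proportional to $x$) that every element of $N_I$ of order one at the origin has lowest component $\lambda x$. Consequently, for any choice of generators $a_i$ and of the $\Psi_b$, the lowest component of $a+\sum_b\varepsilon_b\Psi_b^2$ at the origin is a positive multiple of $x^2$: $P$ is not locally positive definite at a point of $\Z(I)$, where no bump is available to save you.

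The paper's proof is engineered precisely to dodge this obstruction, and it does so by \emph{not} putting $\sum p_i^2$ into $f$. Two B\'ezout identities, one in $\R(x)[y]$ and one in $\R(y)[x]$, produce nonzero elements $\overline{p_x}\in J\cap\R[x]$ and $\overline{p_y}\in J\cap\R[y]$; after multiplication by linear factors arranging that both have the same multiplicity $\nu$ at each of their real zeros, the lowest homogeneous component of $\overline{p_x}^2+\overline{p_y}^2$ at any point of $\Z(I)$ is either a positive constant or $\alpha^2(x-x_0)^{2\nu}+\beta^2(y-y_0)^{2\nu}$, hence positive definite. The function is then $f=f_I^2+(d\overline{p_x})^2+(d\overline{p_y})^2$, with $f_I$ first raised to a power large enough that its numerator is negligible against this definite part at every point of $\Z(I)$; the points of $\Z(p)\setminus\Z(I)$ are handled by an argument analogous to your treatment of $E$. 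In the example above this device produces $x^4+y^4$ as the dominant lowest component at the origin, which no sum of squares containing a generator of order one can achieve. To repair your proof you would need to drop $g$ from the sum and import this pair of one-variable elements of $J$, together with the multiplicity normalization.
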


\begin{proof}
First of all, by the noetherianity of the regulous topology (\cite[Thm. 4.3]{FHMM}), we know that there is a function $f_I\in\SR_{[1]}^k(\R^2)$ such that $\Z(I)=\Z(f_I)$.
Let us write $f_I=d\frac{r}{s}$ where $r$ and $s$ are coprime.\par 

If $N_I=(p_1,\ldots,p_n)$ in $\R[x,y]$, then let us set $p_i=d\overline{p_i}$. Hence we have $\gcd(\overline{p_1},\ldots,\overline{p_n})=1.$
In the principal ring $\R(x)[y]$, we get a B\'ezout type indentity that gives rise to another identity in $\R[x,y]$

$$\overline{p_x}=u_1\overline{p_1}+\ldots+u_n\overline{p_n}$$
where $\overline{p_x}\in\R[x]\setminus\{0\}$, $u_1,\ldots, u_n\in \R[x,y]$.

Likewise, working in $\R(y)[x]$, we get an identity
$$\overline{p_y}=v_1\overline{p_1}+\ldots+v_n\overline{p_n}$$
where
$\overline{p_y}\in\R[y]\setminus\{0\}$, $v_1,\ldots, v_n\in \R[x,y]$.

Up to multiplying our identities by linear factors of type $x-\lambda$ (respectively $y-\mu$), one may assume that $\overline{p_x}$ (respectively $\overline{p_y}$) have same multiplicity $\nu$ at each real zero.


Set $f_I=d \overline{f_I}=d\frac{r}{s}$, $p_x=d\overline{p_x}$ and $p_y=d\overline{p_y}$ and define a function $f$ by
$$f=f_I^2+p_x^2+p_y^2=d^2(\overline{f_I}^2+\overline{p_x}^2+\overline{p_y}^2)=d^2\frac{p}{s^2},$$
where $$p=r^2+s^2(\overline{p_x}^2+\overline{p_y}^2).$$

Up to changing $f_I$ with $f_I^{\nu+1}$, one may assume that at any point $a$ of $\Z(I)$, one has $\ord_a(\overline{f_I})>\nu$.

\vskip 0.2cm Let us check now that $f$ is convenient. Note that
clearly $f\in \SR_{[1]}^k(\R^2)$.
One may readily check that 
$\Z(f)=\Z(I)$ since $\Z(I)\subset \Z(N_I)\subset \Z(p_x)\cap \Z(p_y)$.\par
One has now to check that the polynomial $p$ is locally positive
definite everywhere. Since $p$ is clearly non-negative on $\R^2$, it
is sufficient to prove that $p$ is locally definite on $\Z(p)$.\par
First, let us consider a point $a_0=(x_0,y_0)$ in $\mathcal Z(p)\cap \mathcal Z(I)$.
Let $s_{a_0}$ be the homogeneous component of lowest degree of $s$ at $a_0$.
By assumption $\ord_{a_0} (\overline{f_I})>\nu $ hence
$\ord_{a_0}(r)>\ord_{a_0}(s)+\nu$, 
$\ord_{a_0}(r^2)>\ord_{a_0}(s^2(\overline{p_x}^2+\overline{p_y}^2))$ and hence one has
$$p=r^2+s^2(\overline{p_x}^2+\overline{p_y}^2)\sim_{a_0} s_{a_0}^2\left((\overline{p_x})_{a_0}^2+(\overline{p_y})_{a_0}^2\right)=s_{a_0}^2\left((\alpha(x-x_0))^{2\nu}+(\beta(y-y_0))^{2\nu}\right),$$ 
for some $\alpha,\beta\in\R$. 

Since $s$ is definite at $a_0$, it proves that $p$ is locally positive definite at $\Z(p)\cap \Z(I)$. \par

Let us consider now $a_0\in \Z(p)\setminus \Z(I)$. We have $a_0\in
\Z(s)$ since $f(a_0)\not=0$. At the neighbourhood of $a_0$, since
$f\in \SR_{[1]}^k(\R^2)$, 
one has $s\sim_{a_0}s_{a_0}$ by Lemma \ref{lem-polaire} and also
$d_{a_0}^2p_{a_0}=f({a_0}) s^2_{a_0}$ where $p_{a_0}$
(resp. $d_{a_0}$) denotes the homogeneous component of lowest degree
of $p$ (resp. $d$) at $a_0$. Hence $p_{a_0}$ is locally
positive definite at ${a_0}$ by Lemma
\ref{productdefinitepolynomials}.

We have shown that $p$ is locally positive definite everywhere. 
To conclude the proof, let us note that since
$p=r^2+s^2(\overline{p_x}^2+\overline{p_y}^2)$, then $\Z(p)\subset
\Z(s)\cup \left(\Z(\overline{p_x})\cap \Z(\overline{p_y})\right)$ and
hence $\Z(p)$ is a finite set. 
\end{proof}

Now we may state the \L ojasiewicz property :
\begin{thm}\label{Lojaregblowdim2}
Let $I$ be an ideal of $\SR_{[1]}^k(\R^2)$. Then, we can find $f\in I$ such that for any $g\in I$ there exist  $N\in\N$ and $h\in\SR_{[1]}^k(\R^2)$ such that $g^N=f h$.
\end{thm}
\begin{proof}
First, let us consider $f$ as in Proposition \ref{ZerosIdealNumerators} : 

$$f=\frac{d^2 p}{q}$$
and
$$g=\frac{d r}{s}$$ 
where $r$ and $s$ are coprime in $\R[x,y]$.

Using the \L ojasiewicz property in the $k$-regulous setting, we have 
$g^N=f h$ where $h$ is regulous in $\SR^k(\R^2)$.

We have
$$h=\frac{g^N}{f}=\frac{d^{N-2}q}{p}\left(\frac{r}{s}\right)^N.$$
It follows from Proposition \ref{ZerosIdealNumerators} and the fact
that $g\in \SR_{[1]}^k(\R^2)$ that the denominator $ps^N$ of $h$ is
locally definite on $\R^2$.
By Theorem \ref{equivfctregapres1eclat}, we are done.
\end{proof}

\begin{rem}
The element $f$ cannot necessarily be choosen as a polynomial since the inclusion $\Z(I)\subset \Z(N_I)$ may be strict. Take for example $I=\left(1-\frac{x^3}{x^2+y^2}\right)\subset\SR^0_{[1]}(\R^2)$.
\end{rem}

To end this section, let us reformulate Theorem \ref{Lojaregblowdim2} in term of radical principality of the ring $\SR^k_{[1]}(\R^2)$ :

\begin{thm}\label{radicprincipdim2}
Any radical ideal of $\SR^k_{[1]}(\R^2)$ can be written as the radical of a principal ideal. Namely, for any 
ideal $I$ of $\SR_{[1]}(\R^2)$, there exists an element  
$f\in I$ such that $\sqrt{I}=\sqrt{(f)}$.
\end{thm}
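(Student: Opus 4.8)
The plan is to read off the statement directly from the Łojasiewicz-type property just established in Theorem \ref{Lojaregblowdim2}; no new geometric input is needed. Given an ideal $I$ of $\SR^k_{[1]}(\R^2)$ (the restriction to radical ideals in the first sentence is only cosmetic, since the ``Namely'' clause already phrases the conclusion for an arbitrary ideal), Theorem \ref{Lojaregblowdim2} produces an element $f\in I$ with the property that every $g\in I$ satisfies $g^N=fh$ for some $N\in\N$ and some $h\in\SR^k_{[1]}(\R^2)$. I claim this $f$ does the job, i.e. $\sqrt{I}=\sqrt{(f)}$.

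First I would check the inclusion $\sqrt{(f)}\subseteq\sqrt{I}$: since $f\in I$ we have $(f)\subseteq I$, and taking radicals preserves inclusions. For the reverse inclusion, take any $g\in I$; by the Łojasiewicz property $g^N=fh\in(f)$, hence $g\in\sqrt{(f)}$. This shows $I\subseteq\sqrt{(f)}$, and since $\sqrt{(f)}$ is itself a radical ideal we may take radicals again to get $\sqrt{I}\subseteq\sqrt{\sqrt{(f)}}=\sqrt{(f)}$. Combining the two inclusions gives $\sqrt{I}=\sqrt{(f)}$, which is exactly the asserted radical principality.

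There is essentially no obstacle at this level: the genuine work is hidden in Theorem \ref{Lojaregblowdim2}, which we are entitled to assume, and which in turn rested on Proposition \ref{ZerosIdealNumerators} (the construction of a generator $f=d^2p/q$ with $p$ locally positive definite everywhere and $\Z(p)$ finite, so that quotients by $f$ stay in $\SR^k_{[1]}(\R^2)$) together with the characterization of membership in $\SR^0_{[1]}(\R^2)$ via local positive definiteness of the denominator (Theorem \ref{equivfctregapres1eclat}). So the proof of the present theorem is a one-line deduction, and the only thing to be careful about is not to claim $(f)=I$ or that $I$ is principal — by Proposition \ref{princnotrad} (in its $\SR^k_{[l]}(\R^2)$ analogue, if one were available) a nontrivial radical ideal need not be principal, so one must genuinely pass to the radical of $(f)$ rather than to $(f)$ itself.
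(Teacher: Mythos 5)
Your proof is correct and is exactly what the paper intends: the paper presents Theorem \ref{radicprincipdim2} as a direct reformulation of Theorem \ref{Lojaregblowdim2}, and your two inclusions ($\sqrt{(f)}\subseteq\sqrt{I}$ from $f\in I$, and $I\subseteq\sqrt{(f)}$ from $g^N=fh$) are the intended one-line deduction. Your closing remark that one cannot upgrade this to $I=(f)$ is also consistent with the paper's discussion around Proposition \ref{princnotrad}.
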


\section{Hilbert's $17$-th problem and Positivstellens\"atze in rings
  of regulous functions}
\label{Hilbert17}
\subsection{Algebraic certificates of positivity}

The interest into sums of squares identities in real algebra goes back to Hilbert and his famous list of problems. Namely the Hilbert's 17-th problem ask whenever a non-negative polynomial on $\R^n$ is a sum of squares of rational functions. It has been answered by the affirmative by Artin in 1927 : \par

\vskip 0,2cm
\noindent{\bf Theorem}\;
{\it If $f$ is polynomial in $\R[x_1,\ldots,x_n]$ which is non-negative on $\R^n$, then $f$ is a sum of squares of rational functions in $\R(x_1,\ldots,x_n)$.}
\vskip 0,2cm

In general, $f$ is not a sum of squares in the ring $\R[x_1,\ldots,x_n]$, for example the Motzkin polynomial $z^6+x^4y^2+x^2y^4-3x^2y^2z^2$ is non-negative but not a sum of squares of polynomials.\par

Let us mention that there are several Positivstellens\"atze in some other geometric settings:
\begin{itemize}
\item Any non-negative Nash function on an affine Nash manifold is a sum of squares of quotients of Nash functions (\cite[8.5.6]{BCR}).
\item Any non-negative analytic function on a compact affine analytic manifold is a sum of squares of meromorphic functions (\cite{Ru} and also \cite{Ri} for a local version).
\end{itemize}

There exist also some Positivstellens\"atze {\it without denominator}, namely when the elements of the sum of squares lie already in the ring  and not just in the associated field of fractions. These cases are quite rare, let us state some of them : 

\begin{itemize}
\item Any non-negative analytic function on an affine analytic surface is a sum of squares of analytic functions (\cite{BKS}). 
\item Any non-negative $C^{2k}$ function defined in an interval of $\R$ is the sum of the squares of two $C^k$ functions (\cite{Bo}).
\end{itemize}

Hence, it is worth studying which Positivstellensatz holds in the rings $\SR^k(\R^n)$ and $\SR^k_{[l]}(\R^2)$.
In the sequel, we give a Positivstellensatz without denominator in $\SR^0(\R^n)$ which is obtained quite straightforward using standard arguments, and next we show, by some new arguments, that there exists also a Positivstellensatz without denominator in 
$\SR^0_{[1]}(\R^2)$.


\subsection{In the rings $\SR^0(\R^n)$ and $\SR^\infty(\R^2)$}
As a motivation, in this section we give two Positivstellens\"atze without denominators for the rings $\SR^0(\R^n)$ and $\SR^\infty(\R^2)$ which one easily derives using well known results.\par
First of all, it has already been noticed by Kreisel (\cite{Kr}) that any non-negative polynomial is a sum of squares of regulous functions. It is not difficult to generalize it a bit for regulous functions, for the convenience of the reader we produce a standard proof for this fact.

\begin{thm}\label{sosreg0}
Let $f\in\SR^0(\R^n)$ be non-negative on $\RR^n$. Then, there are some $f_i$'s in $\SR^0(\R^n)$ such that $f=\sum_{i=1}^m f_i^2$.
\end{thm}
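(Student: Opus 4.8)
The plan is to run the classical ``real spectrum $+$ formal Positivstellensatz'' argument in the ring $A=\SR^0(\R^n)$, and then to absorb the unavoidable sum-of-squares weight back into the squares, using crucially that elements of $A$ are honest continuous functions.

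First I would transport the hypothesis to the real spectrum. Since $f\geq 0$ on $\R^n$ we have $\{x\in\R^n\mid -f(x)>0\}=\emptyset$, so Proposition \ref{ArtinLang} (Artin--Lang for $\SR^0(\R^n)$, applied to the single function $-f$) gives $f(\alpha)\geq 0$ for every $\alpha\in\Sp_r A$. The formal Positivstellensatz \cite[Prop. 4.4.1]{BCR} then produces an integer $m\in\N$ and sums of squares $\sigma,\tau\in A$ with
$$\sigma f=f^{2m}+\tau=(f^{m})^{2}+\tau .$$
Two things are recorded at once: $\sigma f$ is itself a sum of squares in $A$, and $\Z(\sigma)\subseteq\Z(f)$, because at a point where $\sigma$ vanishes the non-negative quantities $(f^{m})^{2}$ and $\tau$ both vanish.

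The key move is to multiply once more by $\sigma$. Writing $\sigma=\sum_{j}s_{j}^{2}$ and $\sigma f=\sum_{\ell}z_{\ell}^{2}$ with $s_{j},z_{\ell}\in A$, one gets the ring identity
$$\sigma^{2}f=\sigma\cdot(\sigma f)=\sum_{j,\ell}(s_{j}z_{\ell})^{2}.$$
Now each rational function $F_{j\ell}:=\dfrac{s_{j}z_{\ell}}{\sigma}$ extends to a regulous function on $\R^{n}$: away from $\Z(\sigma)$ this is clear since $s_{j},z_{\ell},\sigma$ are continuous and $\sigma$ does not vanish there; at a point $a\in\Z(\sigma)\subseteq\Z(f)$ one uses the squeeze $F_{j\ell}^{2}\leq\sum_{j,\ell}F_{j\ell}^{2}=f$ valid on $\{\sigma\neq0\}$, and since $f$ is continuous with $f(a)=0$, $F_{j\ell}\to 0$ as $x\to a$, so $F_{j\ell}$ extends continuously by $0$ at $a$. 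Being continuous and rational on the smooth variety $\R^{n}$, $F_{j\ell}$ is regulous. Finally $f=\sum_{j,\ell}F_{j\ell}^{2}$ holds on $\{\sigma\neq0\}$ by the displayed identity, while on $\Z(\sigma)\subseteq\Z(f)$ both sides vanish; hence the identity holds on all of $\R^{n}$, which exhibits $f$ as a sum of squares in $\SR^{0}(\R^{n})$.

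The only delicate point, and the one I would check most carefully, is precisely this last absorption: multiplying by $\sigma$ converts the weight into the square $\sigma^{2}$, after which the elementary bound $F_{j\ell}^{2}\leq f$ combined with the continuity of $f$ is exactly what forces the quotients $s_{j}z_{\ell}/\sigma$ to be continuous on the only locus where they could fail, namely $\Z(\sigma)\subseteq\Z(f)$. It is worth emphasising that this is where the argument is special to $\SR^{0}(\R^{n})$: the same computation yields quotients that are continuous, but there is no reason for them to become regular after a single blowing-up, which is why the statement for $\SR^{0}_{[1]}(\R^{2})$ requires a genuinely different, constructive proof.
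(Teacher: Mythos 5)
Your argument is correct and follows essentially the same route as the paper: Artin--Lang plus the formal Positivstellensatz to get $\sigma f=f^{2m}+\tau$, then a rearrangement exhibiting $f$ as a sum of squares of rational functions whose denominator vanishes only inside $\Z(f)$, with continuity at those points forced by the squeeze $F_{j\ell}^2\leq f$. The only (cosmetic) difference is the choice of common denominator --- you clear by $\sigma$ whereas the paper writes $f=\frac{f^2 s\,(f^{2m}+\tau)}{(f^{2m}+\tau)^2}$ and uses $f^{2m}+\tau$ --- and both choices satisfy the needed inclusion of zero sets.
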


\begin{proof}
For such an $f$ let us consider the set $S=\{x\in\R^n\mid f<0\}$ which is defined by a semialgebraic condition by Proposition \ref{kregsa=sa}.
By assumption, $S=\emptyset$. \par
Then, by Proposition \ref{ArtinLang}, 
$$\{\alpha\in \Sp_r\SR^0(\R^n)\mid f(\alpha)<0\}=\emptyset.$$
One then may conclude with the formal Positivstellensatz \cite[Prop. 4.4.1]{BCR}, to the existence of an identity
$f^{2k}-fs+t=0$ where $s$ and $t$ are sums of squares in $\SR^0(\R^n)$.\par
Hence, $f\times (f^{2k}+t)=f^2\times s$ and one has an identity 
$$f=\dfrac{f^2s\times (f^{2k}+t)}{(f^{2k}+t)^2}=\sum_i\left(\dfrac{f_i}{g}\right)^2$$ where $f_i,g$ are in $\SR^0(\R^n)$ and such that $\Z(g)\subset \Z(f)$. Then, the $\frac{f_i}{g}$'s are rational functions, continuous at any zero $a$ in $\Z(g)$ since $f$ vanishes at $a$ and hence any $f_i/g$ necessarily tends to $0$ at $a$.
\end{proof}

Now, when $k=\infty$, one has also a Positivstellensatz, at least in dimension two. For the convenience of the reader, let us recall what result leads to such a Positivstellensatz.
Since $\SR^\infty(\R^2)$ is the ring of regular functions, it appears as a consequence of the properties of so-called ``bad points'' introduced by Delzell in his thesis. \par
For a non-negative polynomial $p$ in $\R[x_1,\ldots,x_n]$, let us define the set of all the possible denominators in sums of squares identities associated to $p$ :
$${\rm HD}(p)=\{q\in\R[x_1,\ldots,x_n]\mid q^2p\in\sum\R[x_1,\ldots,x_n]^2\}.$$
It is known (\cite{De}) that there exists a polynomial $q\in {\rm HD}(p)$ such that $\Z(q)=\mathcal Z({\rm HD}(p))$ and, on the other hand, $\Z({\rm HD}(p))$ (the set of ``bad points'') has codimension at least $3$. Therefore, in dimension $2$, one has

\begin{thm} Delzell \label{nobadpointdim2}{\cite{De}} \par
Let $p$ be a non-negative polynomial in $\R[x,y]$. Then, there is an algebraic identity $q^2p=\sum p_i^2$ where 
$q,p_i\in\R[x,y]$ and $q>0$ on $\R^2$.
\end{thm}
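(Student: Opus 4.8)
The plan is to deduce the statement directly from Delzell's analysis of the denominators occurring in Hilbert $17$ representations, which was recalled in the paragraph preceding the theorem. First I would check that $\mathrm{HD}(p)$ contains a non-zero polynomial. Since $p\geq 0$ on $\R^2$, Artin's theorem provides a representation $p=\sum_i (a_i/b_i)^2$ with $a_i,b_i\in\R[x,y]$ and $b_i\neq 0$; putting $b=\prod_i b_i$ and clearing denominators gives $b^2p\in\sum\R[x,y]^2$, that is, $b\in\mathrm{HD}(p)\setminus\{0\}$. (This step is the reason the hypothesis $p\geq 0$ enters: otherwise $\mathrm{HD}(p)=\{0\}$ and the codimension statement below fails.)

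Next I would invoke the two structural facts attributed to Delzell: there exists $q_0\in\mathrm{HD}(p)$ with $\Z(q_0)=\Z(\mathrm{HD}(p))$, and the ``bad locus'' $\Z(\mathrm{HD}(p))$ has codimension at least $3$. The hard part is genuinely Delzell's contribution, namely the codimension-$3$ bound together with the existence of a single denominator realizing the whole bad locus; I would treat these as a black box, since there is no shortcut to them within the tools developed in this paper. Specializing to the plane, a Zariski (or even Euclidean) closed subset of $\R^2$ of codimension $\geq 3$ must be empty, so $\Z(q_0)=\emptyset$; in other words $q_0$ has no real zero.

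Finally, since $\R^2$ is connected and $q_0$ is continuous and nowhere vanishing, $q_0$ has constant sign on $\R^2$. Setting $q=q_0$ if $q_0>0$ and $q=-q_0$ otherwise, we obtain $q\in\R[x,y]$ with $q>0$ on $\R^2$ and $q^2=q_0^2$; hence, by the very definition of $\mathrm{HD}(p)$, $q^2p=q_0^2p=\sum_i p_i^2$ for suitable $p_i\in\R[x,y]$, which is the claimed identity. The only real obstacle is thus the imported codimension bound; everything else is bookkeeping, and one should note that the hypothesis $\dim=2$ is exactly what makes the bad locus vanish, since in dimension $3$ it can reduce to a single point (as the Motzkin polynomial shows, whose bad locus is the origin).
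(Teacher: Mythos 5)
Your proposal is correct and follows essentially the same route as the paper, which likewise derives the theorem directly from Delzell's two black-box facts (a single denominator $q$ realizing the bad locus $\Z(\mathrm{HD}(p))$, and the codimension-$\geq 3$ bound forcing that locus to be empty in the plane), then concludes that $q$ is nowhere vanishing and hence of constant sign. Your added checks that $\mathrm{HD}(p)\neq\{0\}$ via Artin and the explicit sign normalization are harmless bookkeeping that the paper leaves implicit.
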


\begin{rem}\label{unif_denom}
After Scheiderer \cite{Sc}, one may even take for $q$ a polynomial of
the form $q=(x^2+y^2+1)^N$ for some integer $N\gg 0$.
\end{rem}
As a consequence, one gets a Positivstellensatz without denominator in the ring
$\SR^\infty(\R^2)$ :

\begin{thm}\label{reguluesosdim2}
Let $f\in\SR^\infty(\R^2)$ be non-negative on $\R^2$. Then, there are some $f_1,\ldots,f_m$ in $\SR^\infty(\R^2)$ such that $f=\sum_{i=1}^m f_i^2$.
\end{thm}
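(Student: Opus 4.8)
The plan is to deduce this Positivstellensatz from Delzell's Theorem \ref{nobadpointdim2} (or its sharpening in Remark \ref{unif_denom}) by a denominator-clearing trick. Since $f\in\SR^\infty(\R^2)=\SO(\R^2)$ is a regular function on $\R^2$, I would first write $f=\frac{p}{q}$ with $p,q\in\R[x,y]$ and $q$ nowhere vanishing on $\R^2$; as $\R^2$ is connected, $q$ has constant sign, so after replacing $(p,q)$ by $(-p,-q)$ if necessary we may assume $q>0$ on $\R^2$. From $f\geq 0$ and $q>0$ we get $p=fq\geq 0$ on $\R^2$, hence the polynomial $pq$ is non-negative on $\R^2$.

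Next I would apply Theorem \ref{nobadpointdim2} to the non-negative polynomial $pq$: there exist $r,p_1,\ldots,p_m\in\R[x,y]$ with $r>0$ on $\R^2$ such that $r^2\,(pq)=\sum_{i=1}^m p_i^2$. The point of introducing $pq$ rather than $p$ is that it exhibits $f$ as a quotient whose denominator is already a perfect square, namely $f=\frac{pq}{q^2}$. Multiplying numerator and denominator by $r^2$ then gives
$$f=\frac{pq}{q^2}=\frac{r^2\,pq}{(rq)^2}=\sum_{i=1}^m\left(\frac{p_i}{rq}\right)^2.$$

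Finally I would observe that $rq$ is a product of two polynomials each everywhere positive on $\R^2$, so $rq>0$ on $\R^2$; hence each $\frac{p_i}{rq}$ is a regular function on $\R^2$, i.e.\ an element of $\SR^\infty(\R^2)$, and $f=\sum_{i=1}^m\left(\frac{p_i}{rq}\right)^2$ is the desired representation without denominator. There is no real obstacle once Delzell's result is granted; the only subtlety worth stressing is the passage from $f=\frac{p}{q}$ to $f=\frac{pq}{q^2}$, which is exactly what ensures that the denominator produced by Delzell's theorem is a square, so that dividing through leaves us with an honest sum of squares inside $\SR^\infty(\R^2)$. (One could equally invoke Remark \ref{unif_denom} and take $r=(x^2+y^2+1)^N$, which makes the positivity of $rq$ even more transparent.)
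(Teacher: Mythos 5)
Your proposal is correct and is essentially the paper's own argument: both write $f=\frac{pq}{q^2}$, apply Delzell's Theorem \ref{nobadpointdim2} to the non-negative polynomial $pq$, and divide by the nowhere-vanishing square $(rq)^2$ to land in $\SR^\infty(\R^2)$. (You could even skip the sign normalization, since $pq=fq^2\geq 0$ automatically.)
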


\begin{proof}
Let us write $f=g/h=gh/h^2$ where $g,h\in\R[x,y]$ and $h$ does not vanish. It suffices then to apply the previous Theorem \ref{nobadpointdim2} to the polynomial $gh$.  
\end{proof}

It is then natural to look for some Positivstellens\"atze without denominator in the rings $\SR^k(\R^n)$ and $\SR^k_{[l]}(\R^2)$. In the following section, we answer to the case $\SR^0_{[1]}(\R^2)$.\par 
Unfortunately, it is not clear how to generalize our construction to $\SR_{[l]}^k(\R^2)$ since for $l>1$ one should have an algebraic characterization as in Theorem \ref{equivfctregapres1eclat}.
The generalization to the case $k>0$ seems also appealing but it is not clear how to follow the conditions of $k$-regularity through the sum of squares identities of our construction. 

\subsection{Hilbert's $17$-th problem in the ring $\SR^0_{[1]}(\R^2)$}

The main natural question that we address here is to find a Positivstellensatz without denominator in the ring $\SR^0_{[1]}(\R^2)$, namely the fact that any rational function $f$ in $\SR^0_{[1]}(\R^2)$ which is non-negative everywhere can be written as a sum of squares in $\SR^0_{[1]}(\R^2)$.\par

First note that a formal argument as in the ring of regulous functions does not work any longer because it does not provide any control on the denominator.\par
Indeed, we would get, using the formal Positivstellensatz (Remark \ref{PSSRegulousbis}), an identity of the form
$f^{2k}-fs+t=0$ where $s$ et $t$ are sums of squares in $\SR_{[1]}^0(\R^2)$. Hence, we would get an identity
$f=\sum f_i^2$ where the $f_i$'s have the form $\frac{g_i}{f^{2k}+t}$
with $g_i$ in $\SR_{[1]}^0(\R^2)$, but, this identity does not show
whether $f_i$ is or not in $\SR_{[1]}^0(\R^2)$ (it only gives that
$f_i\in \SR^0(\R^2)$).

\subsubsection{Under a flatness hypothesis}

When $f=p/q\in\SR^0(\R^2) $, we have seen in Subsection 1.3 that the condition $\Z(q)\subset \Z(f)$ implies that, for any integer $k$, there is a power of $f$ which is $k$-flat and of class $C^k$ everywhere. It appears that in this case it is easy 
to obtain a solution to the $17$-th Hilbert problem.
\begin{prop}\label{plateapresuneclat}
Let $f=p/q$ be a non-negative function in $\SR^0_{[1]}(\R^2)$ such that $\Z(q)\subset \Z(f)$. Then, $f=\sum f_{i}^2$ where the $f_{i}$'s are regulous functions in $\SR^0_{[1]}(\R^2)$.
\end{prop}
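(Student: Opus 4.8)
The plan is to exploit the flatness to reduce the statement to the classical Artin theorem on $\R[x,y]$, and then check that the denominators appearing in the resulting sum-of-squares identity are harmless in $\SR^0_{[1]}(\R^2)$. First I would write $f=p/q$ with $p,q$ coprime in $\R[x,y]$, $q>0$ on $\R^2$; since $f\in\SR^0_{[1]}(\R^2)$, by Theorem \ref{equivfctregapres1eclat} the polynomial $q$ is locally positive definite at every point of $\pol(f)$, and since $f\geq 0$ we also have $p\geq 0$ on $\R^2$ (using $\Z(q)\subset\Z(f)\subset\Z(p)$ from Lemma \ref{lemme1}, together with $q>0$). The polynomial $pq$ is then non-negative on $\R^2$, so by Theorem \ref{nobadpointdim2} (Delzell), or even by Scheiderer's uniform-denominator version (Remark \ref{unif_denom}), there is an identity $(x^2+y^2+1)^{2N}\,pq=\sum_i p_i^2$ with $p_i\in\R[x,y]$. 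Dividing by $q^2(x^2+y^2+1)^{2N}$ gives
$$f=\frac{pq}{q^2}=\sum_i\left(\frac{p_i}{q(x^2+y^2+1)^N}\right)^2,$$
so it remains to prove that each $f_i:=\dfrac{p_i}{q(x^2+y^2+1)^N}$ lies in $\SR^0_{[1]}(\R^2)$.

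For this, the only possible poles of $f_i$ are points of $\Z(q)$, which are contained in $\pol(f)\subset\Z(f)$ and hence in $\Z(p)\subset\Z(pq)$; at such a point $a$ we have $\sum_i p_i^2$ vanishing (since $pq$ does and $(x^2+y^2+1)^{2N}>0$), so each $p_i(a)=0$. More precisely, comparing orders at $a$ in the identity $(x^2+y^2+1)^{2N}pq=\sum p_i^2$ shows $2\ord_a(p_i)\geq \ord_a(pq)=\ord_a(p)+\ord_a(q)$, hence $\ord_a(p_i)\geq \frac12(\ord_a(p)+\ord_a(q))\geq \ord_a(q)$ because $\ord_a(p)\geq\ord_a(q)$ by Lemma \ref{lem-polaire}.\ref{ordreindet}. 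Thus $\ord_a(p_i)\geq\ord_a(q)$, so the homogeneous component of lowest degree of the numerator $p_i$ of $f_i$ is divisible by the (definite) lowest-degree component $q_a$ of $q$, hence is itself definite by Lemma \ref{productdefinitepolynomials}; equivalently the denominator $q(x^2+y^2+1)^N$ of $f_i$ (after cancelling common factors its lowest-order part still divides $q_a$) is locally positive definite at every pole of $f_i$. By Theorem \ref{equivfctregapres1eclat}, $f_i\in\SR^0_{[1]}(\R^2)$, provided we first check that $f_i$ is regulous, i.e. extends continuously across $\Z(q)$; but this follows from the order estimate $\ord_a(p_i)\geq\ord_a(q)$ together with the local boundedness of $p_i/q$ given by Lemma \ref{lem-polaire}.\ref{lemhom} (the denominator being definite at $a$), and the continuous extension value is $0$ when the order is strict, which it is here since $\ord_a(p_i)\geq\frac12(\ord_a(p)+\ord_a(q))>\ord_a(q)$ as soon as $\ord_a(p)>\ord_a(q)$, i.e. whenever $f(a)=0$ — and $f(a)=0$ indeed holds since $a\in\Z(f)$. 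So each $f_i\in\SR^0_{[1]}(\R^2)$ and $f=\sum_i f_i^2$ is the desired representation.

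The main obstacle I anticipate is bookkeeping the orders at the poles and making sure the cancellation of common factors between $p_i$ and $q(x^2+y^2+1)^N$ does not destroy the definiteness of the lowest-order part of the denominator; this is handled by Lemma \ref{productdefinitepolynomials} (a divisor of a definite homogeneous polynomial is definite), so no genuinely new input is needed beyond the Delzell/Scheiderer denominator bound and the characterisation Theorem \ref{equivfctregapres1eclat}. One should also remark that replacing Delzell's denominator by the explicit $(x^2+y^2+1)^N$ is what keeps the construction entirely inside polynomials, which is convenient but not essential — any strictly positive polynomial denominator would do equally well, since a strictly positive polynomial is trivially locally positive definite (its lowest-order component at any point is a nonzero constant).
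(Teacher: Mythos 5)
Your proof is correct and follows essentially the same route as the paper: both reduce to Delzell's theorem on the absence of bad points in dimension two (Theorem \ref{nobadpointdim2}) and then check that the resulting quotients lie in $\SR^0_{[1]}(\R^2)$ using the flatness hypothesis for continuity and Theorem \ref{equivfctregapres1eclat} together with Lemma \ref{productdefinitepolynomials} for local positive definiteness of the denominators. The only (immaterial) difference is that you apply the sum-of-squares identity once to the product $pq$, whereas the paper applies it separately to $p$ and to $q$ and multiplies the two identities; your order estimates at the poles are in fact more explicit than the paper's brief justification.
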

\begin{proof}
First of all, since $\Z(q)$ is finite, we know that $q$ keeps the same sign on the whole $\R^2$, hence we may assume that $q$ (and also $p=qf$) are both non-negative polynomials.\par
By  \ref{nobadpointdim2}, there exist non-vanishing polynomials $a$ and $b$ in $\R[x,y]$ such that $a^2p=\sum p_i^2$, $b^2q=\sum q_j^2$. Hence $f=\sum f_{i,j}^2$ where $$f_{i,j}=\frac{b p_i q_j}{a\sum q_k^2}.$$
The $f_{i,j}$'s are rational functions which are continuous (they tend to $0$) at each $x$ in $\Z(\sum q_k^2)$ since $\Z(q)\subset \Z(f)$.
Moreover, their denominator is essentially the denominator of $q$, hence $f_{i,j}$'s are in $\SR^0_{[1]}(\R^2)$.\par
\end{proof}

\begin{rem}
\begin{enumerate}
\item[(i)]
Under this flatness hypothesis $\mathcal Z(q)\subset \mathcal Z(f)$, let us notice that
one may ``choose'' the denominator in the sum of squares
identity. Namely, by Remark \ref{unif_denom}, one may even take
$a=b=(x^2+y^2+1)^N$ for an $N\gg 0$.
\item[(i)]
The condition $\Z(q)\subset \Z(f)$ is equivalent to $\Z(p)=\Z(f)$, which means in particular that the zero set of $f$ is an algebraic set.
\end{enumerate}
\end{rem}

An interesting corollary :
\begin{prop}
If $f$ is a non-negative function in $\SR_{[1]}^0(\R^2)$ with
$\mathcal Z(q)\subset \mathcal Z(f)$, then $f$ is of class $C^1$ on $\R^2$.
\end{prop}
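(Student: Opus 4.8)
The plan is to feed the sum-of-squares decomposition just obtained in Proposition~\ref{plateapresuneclat} into the flatness estimate of Corollary~\ref{kflat[1]}. Keeping the standing notation $f=p/q$ with $p,q$ coprime and $q$ not vanishing on $\dom(f)$, the hypothesis $\mathcal Z(q)\subseteq\mathcal Z(f)$ is exactly $\pol(f)\subseteq\mathcal Z(f)$ (recall $\pol(f)=\mathcal Z(q)$ for coprime $p,q$), so Proposition~\ref{plateapresuneclat} applies and yields an identity $f=\sum_{i=1}^m f_i^2$ with each $f_i\in\SR^0_{[1]}(\R^2)$.

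The next step is to locate the zeros and poles of the $f_i$. Since $f=\sum_{j}f_j^2\ge f_i^2\ge 0$ on $\R^2$, every $f_i$ vanishes on $\mathcal Z(f)$, hence $\mathcal Z(f)\subseteq\mathcal Z(f_i)$. On the other hand, the summands produced in the proof of Proposition~\ref{plateapresuneclat} are (after relabelling the index) of the form $f_i=\dfrac{b\,p_iq_j}{a\sum_k q_k^2}$ with $a,b$ non-vanishing polynomials and $\sum_k q_k^2=b^2q$, so the denominator of $f_i$ has zero set $\mathcal Z(q)=\pol(f)$; reducing to lowest terms only shrinks this, so $\pol(f_i)\subseteq\pol(f)$. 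Chaining these inclusions gives $\pol(f_i)\subseteq\pol(f)\subseteq\mathcal Z(f)\subseteq\mathcal Z(f_i)$: each $f_i$ is a regulous function, regular after one blowing-up, all of whose poles are zeros.

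It then remains to see that each $f_i^2$ is of class $C^1$ on $\R^2$. On the open set $\R^2\setminus\mathcal Z(f_i)$ the function $f_i$ has no pole, hence is regular and in particular $C^\infty$, so $f_i^2$ is $C^\infty$ there. At a point of $\mathcal Z(f_i)$, Corollary~\ref{kflat[1]} applied with $k=1$ and $m=2$ gives that $f_i^2$ is $1$-flat, in particular of class $C^1$. Thus $f_i^2\in C^1(\R^2)$ for every $i$, and $f=\sum_{i=1}^m f_i^2$, being a finite sum of $C^1$ functions, is of class $C^1$ on $\R^2$.

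I do not expect any real obstacle: the two ingredients (the denominator-free sum of squares and the explicit flatness bound) are already in hand, and the only point needing a little care is the bookkeeping $\pol(f_i)\subseteq\mathcal Z(f_i)$, which is where the hypothesis $\mathcal Z(q)\subseteq\mathcal Z(f)$ is used a second time. If one prefers to avoid Corollary~\ref{kflat[1]}, one can argue directly from $\partial_{x_j}f=2\sum_i f_i\,\partial_{x_j}f_i$ on $\dom(f)$: at a point $a\in\pol(f)$ each $f_i$ is continuous with $f_i(a)=0$ while $\partial_{x_j}f_i$ is locally bounded near $a$ by Proposition~\ref{locbound}, so every $\partial_{x_j}f$ extends continuously by $0$ across $\pol(f)$, whence $f\in\SR^1(\R^2)$ by Corollary~\ref{equivalence2}.
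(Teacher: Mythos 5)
Your proof is correct and follows essentially the same route as the paper: combine the denominator-free sum of squares $f=\sum f_i^2$ from Proposition~\ref{plateapresuneclat} with the local boundedness of first partial derivatives from Proposition~\ref{locbound} (your main version packages this through Corollary~\ref{kflat[1]}, whose $k=1$ case is exactly that boundedness, while your closing alternative via $\partial_v f=2\sum f_i\,\partial_v f_i$ is verbatim the paper's argument). The bookkeeping $\pol(f_i)\subseteq\pol(f)\subseteq\mathcal Z(f)\subseteq\mathcal Z(f_i)$ that you carry out is sound and is the only point the paper leaves implicit.
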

\begin{proof}
From the identity $f=\sum f_i^2$ coming from \ref{plateapresuneclat}, one deduces that $\partial_vf=2\sum f_i\cdot(\partial_vf_i)$. We know that $\partial_vf_i$
are locally bounded by Proposition \ref{locbound}, hence
$\partial_vf\to 0$ at any point of $\pol(f)$. The proof follows now
from Theorem \ref{thm-top}.
\end{proof}

Our goal in the sequel is to get rid off the condition $\Z(q)\subset
\Z(f)$ in \ref{plateapresuneclat}.

\subsubsection{General case}
In the general case, one may construct successive algebraic identities
where we get rid off the undesirable poles one by one by an induction
process. First, some notation.
Let $f_1$ and $f_2$ be some regulous functions, we set
$$\Zd_{f_1}(f_2)=\{a\in\R^2\mid f_2(a)=0,f_1(a)\not=0\}.$$
Let $f\in \SR_{[1]}^0(\R^2)$ be non-negative on $\R^2$. By Theorem
\ref{nobadpointdim2}, we may write $f=\dfrac{p}{q}$ with $p$ and $q$
some sums of squares of polynomials in two variables.\par
If $\Z_f(q)=\emptyset$ i.e. if $\Z(q)\subset \Z(f)$ then we are able
to write $f$ as a sum of squares of elements in $\SR_{[1]}^0(\R^2)$
(Proposition \ref{plateapresuneclat}).\par
If $\Z_f(q)\not=\emptyset$ then choose $a\in \Z_f(q)$. The idea, in
the following, is to construct a new representation $f=\dfrac{g}{h}$
with $g$ and $h$ some sums of squares of elements in
$\SR_{[1]}^0(\R^2)$ such that $$\Z_f(h)=\Z_f(q)\setminus\{a\}.$$
Repeating this process we will get a representation of $f$ as a
quotient of elements in $\SR_{[1]}^0(\R^2)$ such that $\Z_f$ of the
denominator is empty. The key tool to obtain such representation of
$f$ is the next lemma.\par

Before stating the lemma, let us emphasis a key algebraic identity 
\begin{equation}\label{identitepoly}
\sum_{i=1}^n X_i^2\cdot\sum_{i=1}^nY_i^2=\left(\sum_{i=1}^n X_i Y_i\right)^2+\sum_{1\leq i<j\leq n}(X_iY_j-X_jY_i)^2.
\end{equation}

\begin{lem}\label{keyinductionsos}
Let $f$ be a regulous function in $\SR_{[1]}^0(\R^2)$. Assume that there exist some integers $k,m$, some polynomials
$p_i$, $q_j$, an identity $f=\frac{p}{q}$ and a non-negative
polynomial $d$ which is locally positive definite everywhere such that
\begin{itemize}
\item $p=\sum_{i=1}^k p_i^2$ and $q=\sum_{j=1}^m q_j^2$,
\item $q$ is locally positive definite everywhere,
\item $\frac{p_i}{d}$ and $\frac{q_j}{d}$ are continuous rational
  functions (they are in $\SR_{[1]}^0(\R^2)$ and hence $\frac{q}{d^2}$
  is also in $\SR_{[1]}^0(\R^2)$).
\end{itemize}
Assume there exists $a$ in $\R^2$ such that $q(a)=0$, $f(a)\not=0$, $d(a)\not=0$ and
set $e=d\times q_{a}$ where $q_{a}$ is the homogeneous component of
smallest degree of $q$ at the point $a$. It means in particular that $a\in \Zd_f\left(\frac{q}{d^2}\right)$. \par
Then, there are some integers $l,n$, some polynomials $r_i$, $s_j$ and an identity 
$f=\frac{r}{s}$ such that
\begin{itemize} 
\item $r=\sum_{i=1}^{l} r_i^2=p\times q_a$ and $s=\sum_{j=1}^{n} s_j^2=q\times q_a$,
\item $s$ is locally positive definite everywhere,
\item $\dfrac{r_i}{e}$ and $\dfrac{s_j}{e}$ are continuous rational functions
(and hence  $\frac{s}{e^2}$ and $\frac{r}{e^2}$ are also in $\SR_{[1]}^0(\R^2)$),
\item $\Zd_f\left(\frac{s}{e^2}\right)= \Zd_f\left(\frac{q}{d^2}\right)\setminus \{a\}$. 
\end{itemize}
\end{lem}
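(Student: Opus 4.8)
The plan is to produce the required representation by multiplying the numerator and the denominator of $f=\frac pq$ by the form $q_a$; the real content is to choose the sums of squares decompositions of $pq_a$ and of $qq_a$ carefully enough that, after dividing everything by $e=dq_a$, one lands back in $\SR^0_{[1]}(\R^2)$ and not merely in $\SR^0(\R^2)$. First I would analyse the orders at $a$. Since $q$ is locally positive definite at $a$ and $q(a)=0$, the form $q_a$ is positive definite, so $2t:=\ord_a(q)=\deg q_a$ is even with $t\geq 1$. As $q=\sum_j q_j^2$ has order exactly $2t$ at $a$, no $q_j$ can have order $<t$ at $a$ (otherwise the lowest homogeneous component of $q$, being a nonzero sum of squares of forms, would have order $<2t$); writing $c_j$ for the degree $t$ homogeneous component of $q_j$ at $a$ (so $c_j=0$ when $\ord_a q_j>t$), one gets $q_a=\sum_j c_j^2$. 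Similarly, since $qf=p$ as continuous functions on $\R^2$ and $f(a)\neq 0$ (note $f\geq 0$ because $p,q\geq 0$, hence in fact $f(a)>0$), a direct Taylor estimate at $a$ (in the spirit of Lemma \ref{lem-polaire}) gives $\ord_a(p)=2t$ with lowest homogeneous component $f(a)\,q_a$; so no $p_i$ has order $<t$ at $a$ either, and setting $c_i'=(p_i)_t/\sqrt{f(a)}$ (a genuine real form since $f(a)>0$) one gets $q_a=\sum_i (c_i')^2$. Finally $e=dq_a$ is locally positive definite everywhere, being a product of polynomials with this property (Lemma \ref{productdefinitepolynomials}), with $\ord_a(e)=2t$ and lowest homogeneous component $d(a)\,q_a$.

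Next I would apply the algebraic identity \eqref{identitepoly}. Applied to $q\cdot q_a=\bigl(\sum_j q_j^2\bigr)\bigl(\sum_j c_j^2\bigr)$ it produces $s:=qq_a=s_1^2+\sum_{j<j'}s_{jj'}^2$ with $s_1=\sum_j q_j c_j$ and $s_{jj'}=q_j c_{j'}-q_{j'}c_j$; comparing degree $2t$ parts at $a$ gives $\ord_a(s_1)=2t$ with lowest component $\sum_j c_j^2=q_a$, whereas the degree $2t$ part of each $s_{jj'}$ cancels, so $\ord_a(s_{jj'})>2t$. Applied to $p\cdot q_a=\bigl(\sum_i p_i^2\bigr)\bigl(\sum_i (c_i')^2\bigr)$ it produces $r:=pq_a=r_1^2+\sum_{i<i'}r_{ii'}^2$ with $\ord_a(r_1)=2t$ and lowest component $\sqrt{f(a)}\,q_a$, while $\ord_a(r_{ii'})>2t$. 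This already yields the first bullet of the statement, and the second bullet ($s$ locally positive definite everywhere) is again Lemma \ref{productdefinitepolynomials} applied to $q\cdot q_a$. Note $f=\frac rs=\frac{pq_a}{qq_a}$, as required.

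For the third bullet I would check that each $s_j/e$ and $r_i/e$ is a continuous rational function whose reduced denominator divides $e=dq_a$. Away from $\Z(d)\cup\{a\}$ this is clear. At a point $b\in\Z(d)\setminus\{a\}$ one has $q_a(b)\neq 0$, so writing for instance $s_1/e=\bigl(\sum_j (q_j/d)\,c_j\bigr)/q_a$, and similarly for the remaining $s_j/e$ and for the $r_i/e$ with the $p_i/d$ in place of the $q_j/d$, the function is continuous at $b$ because each $q_j/d$ and each $p_i/d$ is regulous by hypothesis. At $a$ one invokes Lemma \ref{lem-polaire}.(\ref{lemhom}): any polynomial of order $>2t$ at $a$ divided by $e$ tends to $0$ (so $s_{jj'}/e\to 0$, $r_{ii'}/e\to 0$), while $s_1/e=\frac1d+\frac he$ with $\ord_a h>2t$ tends to $1/d(a)$, and likewise $r_1/e\to \sqrt{f(a)}/d(a)$. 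Since the reduced denominator of each of these quotients divides the locally positive definite polynomial $e$, it is itself locally positive definite at the poles of the quotient (Lemma \ref{productdefinitepolynomials}), so Theorem \ref{equivfctregapres1eclat} gives that each $s_j/e$ and each $r_i/e$ lies in $\SR^0_{[1]}(\R^2)$; hence so do $s/e^2=\sum_j (s_j/e)^2$ and $r/e^2=\sum_i (r_i/e)^2$. The last bullet is a pointwise computation on $s/e^2=q/(d^2q_a)$: at a point $x$ with $f(x)\neq 0$ and $x\neq a$ one has $q_a(x)\neq 0$, so $(s/e^2)(x)=0$ exactly when $(q/d^2)(x)=0$, whereas $(s/e^2)(a)=1/d(a)^2\neq 0$ although $(q/d^2)(a)=0$; thus $\Zd_f(s/e^2)=\Zd_f(q/d^2)\setminus\{a\}$.

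The main obstacle is the third bullet, and precisely the necessity of producing the decompositions of $pq_a$ and $qq_a$ through identity \eqref{identitepoly} with the \emph{specific} choices $c_j=(q_j)_t$ and $c_i'=(p_i)_t/\sqrt{f(a)}$: this is exactly what forces the diagonal terms $s_1,r_1$ to have lowest homogeneous component proportional to $q_a$ and the off-diagonal terms to have strictly larger order at $a$, so that dividing by $e=dq_a$ gives continuous — not merely bounded — rational functions (an arbitrary sum of squares writing of $q_a$ would fail, the quotients being then only locally bounded). The other delicate point is to keep enough control on the reduced denominators to invoke Theorem \ref{equivfctregapres1eclat}, so as to remain inside $\SR^0_{[1]}(\R^2)$ rather than only inside $\SR^0(\R^2)$.
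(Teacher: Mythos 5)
Your proof is correct and follows essentially the same route as the paper: decompose $q_a$ (resp.\ $q_a=p_{2t}/f(a)$) as the sum of squares of the degree-$t$ components of the $q_j$ (resp.\ $p_i$), apply the Lagrange identity so that the diagonal term has lowest component proportional to $q_a$ while the off-diagonal terms vanish to order $>2t$, and then divide by $e=dq_a$ and invoke Theorem \ref{equivfctregapres1eclat}. The only (cosmetic) difference is that the paper premultiplies the lowest components by $d$ (setting $u_i=t_i d$) to build divisibility by $d$ into the decomposition, whereas you keep $c_j=(q_j)_t$ and handle the points of $\Z(d)\setminus\{a\}$ separately by rewriting the quotients in terms of the regulous functions $q_j/d$ and $p_i/d$.
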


\begin{proof}
Let us assume for simplicity that $d(a)=1$ and $f(a)=1$.\par
Looking at the homogenous components of $q_i$ at $a$ one may write 
$q_i=t_{i}+w_{i}$
where the $t_{i}$'s and the $w_{i}$'s are polynomials in $\R[x,y]$
such that, the $t_{i}$'s are homogeneous at $a$,  $\ord_{a}(t_{i})<\ord_{a}(w_{i})$ if 
$2\ord_{a}(q_{i})=\ord_a q$ and otherwise $t_i=0$ if $2\ord_{a}(q_{i})>\ord_a q$.\par
Let us set $u_i=t_i\times d$ and define $v_{i}$ such that $q_i=u_{i}+v_{i}$. Since $d(a)=1$ we still have
$\ord_{a}(u_{i})<\ord_{a}(v_{i})$ if $2\ord_{a}(q_{i})=\ord_a q$ and $u_{i}=0$ otherwise. 
Let us write
$q_a=\sum_{i=1}^{m} u_{i}^2$.

By using identity (\ref{identitepoly}), one gets
 $$\frac{q\times q_{a}}{q_{a}^2}=\frac{\left(\sum_{i=1}^{m}(u_{i}+v_{i})^2\right)\left(\sum_{i=1}^{m} u_{i}^2\right)}{\left(\sum_{i=1}^{m} u_{i}^2\right)^2}$$
 $$=\frac{\left(\sum_{i=1}^{m}u_{i}(u_{i}+v_{i})\right)^2+\sum_{1\leq i<j\leq m} \left((u_{i}+v_{i})u_{j}-u_{i}(u_{j}+v_{j})\right)^2}{\left(\sum_{i=1}^{m} u_{i}^2\right)^2}$$
 $$=\left(1+\frac{\sum_{i=1}^{m}u_{i}v_{i}}{\sum_{i=1}^{m} u_{i}^2}\right)^2+\sum_{1\leq i<j\leq m} \left(\frac{v_{i}u_{j}-u_{i}v_{j}}{\sum_{i=1}^{m} u_{i}^2}\right)^2$$
Set $s_1=\sum_{i=1}^{m} u_{i}^2+\sum_{i=1}^{m}u_{i}v_{i}$ and
$s_{i,j}=v_{i}u_{j}-u_{i}v_{j}$ for $1\leq i<j\leq m$.
Up to re-indexing the $s_{i,j}$'s, one may write $\frac{q\times q_{a}}{q_{a}^2}=\sum_{j=1}^n\left(\frac{s_j}{q_a}\right)^2$.
Let us note that any $\frac{s_{j}}{d\times q_a}$ is a rational
function which is continuous everywhere. Indeed, since any $u_i$ is
divisible by $d$ in $\R[x,y]$, it suffices to show the continuity at
the point $a$. We use then that $q_a$ is positive definite at $a$ (see
Lemma \ref{lem-polaire}) and just look at the order of each $s_i$ at $a$. 
The polynomial $ s=q\times q_a $ is obviously positive definite since $q$ is.
With $e=d\times q_a$ we get then 
$\frac{s}{e^2}=\sum_{i=1}^n\left(\frac{s_i}{e}\right)^2$ where $\frac{s}{e^2}(a)\not=0$.\par 

Again by Lemma \ref{lem-polaire} and since $q$ is positive definite and $f(a)\not=0$, then $p_a$, the smallest degree homogeneous component at $a$, is proportional to $q_a$ : namely $p_a=f(a)q_a$. Since we assumed that $f(a)=1$, we get $p_a=q_a$. \par

We proceed as previously to write
$$\frac{p\times q_{a}}{q_{a}^2}=\sum_{i=1}^{l}\left(\frac{r_i}{q_a}\right)^2$$
where the $r_i$'s are polynomials such that any $\frac{r_i}{e}$ is a rational function continuous on $\R^2$.

To end, we recall that 
$s=q\times q_a$, $r=p\times p_a$, $e=d\times q_a$ and hence
$$f=\frac{\dfrac{p}{d^2}}{\dfrac{q}{d^2}}=\frac{\dfrac{r}{e^2}}{\dfrac{s}{e^2}}.$$

Since $\frac{s}{e^2}=\frac{1}{q_a}\times\frac{q}{d^2}$,
we obviously have $\Z_f\left(\frac{s}{e^2}\right)\subset \Z_f\left(\frac{q}{d^2}\right)$ and, remembering that  
$\frac{s}{e^2}(a)\not=0$, we get
$a\notin Z_f\left(\frac{s}{e^2}\right)$.
\end{proof}

We are ready then to state our Positivstellensatz without denominator

\begin{thm}
\label{SOS}
Any non-negative function $f\in\SR^0_{[1]}(\R^2)$ can be written as a sum of squares $f=\sum_i^rf_i^2$ where $f_i\in\SR^0_{[1]}(\R^2)$. \par
Moreover, if $f=p/q$ where $p$ and $q$ are sums of squares of
polynomials and $q$ is locally positive definite everywhere, then the denominator of the $f_i$'s can be chosen to be $q\times\prod_{a\in\Zd_f(q)}q_a$ where
$q_a$ is the minimal degree homogeneous component of $q$ at the point $a$.
\end{thm}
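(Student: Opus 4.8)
The strategy is to reduce to the flat case already treated in Proposition \ref{plateapresuneclat} by peeling off, one at a time, the poles $a$ of $f$ at which $f(a)\neq 0$; the reduction step is exactly Lemma \ref{keyinductionsos}. First I would bring $f$ to the normal form $f=p/q$ with $p$ and $q$ sums of squares of polynomials and $q$ locally positive definite everywhere. Starting from a reduced expression $f=p_0/q_0$, the denominator $q_0$ has finite zero set, hence constant sign, which we normalise to be positive; by Theorem \ref{equivfctregapres1eclat} it is locally positive definite at each pole of $f$. Then $f=(p_0q_0)/q_0^2$ with $p_0q_0$ a non-negative polynomial, and by Theorem \ref{nobadpointdim2} there is a non-vanishing $a\in\R[x,y]$ with $a^2p_0q_0$ a sum of squares of polynomials, so we may take $p=a^2p_0q_0$ and $q=(aq_0)^2$; note $\Z(q)=\pol(f)$ is finite and $q$ is locally positive definite everywhere. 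The finite set driving the induction is $\Zd_f(q)=\{c\in\pol(f)\mid f(c)\neq 0\}$.

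If $\Zd_f(q)=\emptyset$, that is $\Z(q)\subseteq\Z(f)$, then Proposition \ref{plateapresuneclat} concludes the proof, the denominators of the $f_i$ being (divisors of) $q=q\cdot\prod_{a\in\emptyset}q_a$. If $\Zd_f(q)\neq\emptyset$, choose $a\in\Zd_f(q)$ and apply Lemma \ref{keyinductionsos}, with auxiliary polynomial $d=1$ at the first step and, thereafter, $d$ equal to the running product of the homogeneous components $q_{a'}$ already produced. The mechanism that keeps the induction alive is that each such $q_{a'}$ is a positive definite binary form vanishing only at $a'$; consequently $d$ is again locally positive definite everywhere and satisfies $d(a)\neq 0$ at the next point $a$ to be removed, the required $q(a)=0$ follows from $(q/d^2)(a)=0$, and the conclusion $\Zd_f(s/e^2)=\Zd_f(q/d^2)\setminus\{a\}$ of the Lemma strips off exactly one point of the bad set. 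After $\#\Zd_f(q)$ applications one reaches a representation $f=P/Q$ with $P=\sum_iP_i^2$, $Q=\sum_jQ_j^2$ sums of squares of polynomials, with $P_i/D,\,Q_j/D\in\SR^0_{[1]}(\R^2)$ where $D=\prod_{a\in\Zd_f(q)}q_a$, with $Q$ and $D$ locally positive definite everywhere, with $Q=q\prod_{a\in\Zd_f(q)}q_a$ up to a positive constant, and with $\Zd_f(Q/D^2)=\emptyset$, i.e. $\Z(Q/D^2)\subseteq\Z(f)$.

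To finish I would apply the identity (\ref{identitepoly}) to $PQ$, after padding $(P_i)$ and $(Q_j)$ to a common length, obtaining
$$f=\frac{PQ}{Q^2}=\left(\frac{\sum_iP_iQ_i}{Q}\right)^2+\sum_{i<j}\left(\frac{P_iQ_j-P_jQ_i}{Q}\right)^2.$$
Let $g$ denote a typical base on the right, a rational function with denominator $Q$. From $g^2\leq f$ on $\dom(f)$ one gets that $g$ is locally bounded and extends continuously by $0$ at every point of $\Z(f)$; and at a point $c\notin\Z(f)$ one has $(Q/D^2)(c)\neq 0$ since $\Zd_f(Q/D^2)=\emptyset$, so $g=\bigl((P_i/D)(Q_j/D)-(P_j/D)(Q_i/D)\bigr)\big/(Q/D^2)$ is a quotient of functions of $\SR^0_{[1]}(\R^2)$ with non-vanishing denominator, hence continuous at $c$. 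Therefore $g\in\SR^0(\R^2)$, and $\pol(g)\subseteq\Z(Q)=\pol(f)$ is finite; since $Q$ is locally positive definite everywhere, any reduced denominator of $g$ divides $Q$ and, normalised positive, is locally positive definite by Lemma \ref{productdefinitepolynomials}, so Theorem \ref{equivfctregapres1eclat} gives $g\in\SR^0_{[1]}(\R^2)$, with denominator that may be taken to be $Q=q\prod_{a\in\Zd_f(q)}q_a$. This proves both assertions.

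The main obstacle is not a single deep step but the bookkeeping sustaining the induction: Lemma \ref{keyinductionsos} is built precisely so that, after each reduction, the sum-of-squares decompositions of the new numerator and denominator have proportional lowest homogeneous components at the removed point, which is what makes the Lagrange combination extend continuously there. The delicate verifications are that the auxiliary polynomial $e=dq_a$ remains locally positive definite everywhere and non-vanishing at the next point to be treated, so the induction does not stall before $\Zd_f$ is exhausted, and that the final passage from the rational normal form $P/Q$ back to honest elements of $\SR^0_{[1]}(\R^2)$ respects the local positive definiteness of denominators required by Theorem \ref{equivfctregapres1eclat}.
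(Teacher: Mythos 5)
Your proof is correct and follows essentially the same route as the paper: the same initial normalisation to sums of squares with locally positive definite denominator, the same induction on $\#\Zd_f(q)$ driven by Lemma \ref{keyinductionsos} with $d$ the running product of the $q_{a'}$'s, and the same final continuity verification via Theorem \ref{equivfctregapres1eclat}. The only cosmetic difference is that you expand $PQ$ with the Lagrange identity (\ref{identitepoly}) at the last step, whereas the paper uses the plain product expansion $\sum_{i,j}(r_is_j/s)^2$; both yield the announced denominator $q\times\prod_{a\in\Zd_f(q)}q_a$.
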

Our proof can be seen as an algorithm which gives explicitely a way to
construct our sum of squares (assuming we know how to write a
non-negative polynomials as a sums of squares of regulous functions!).\par
Here is an example where one can see how works the first step :
\begin{ex}
Let $f=p/q$ with
$$p=(x+y)^2+(x-y+y^2)^2$$ and $$q=(x+y^2)^2+y^2.$$
One has $\mathcal Z(q)=\{o\}$ and at $o$, one
has $$p\sim_op_o=(x+y)^2+(x-y)^2=2(x^2+y^2)$$
and $$q\sim_oq_o=x^2+y^2.$$
Consequently, $Z_f(q)=\{o\}.$

Looking at the numerator, one has
$$\begin{array}{rcl}
\dfrac{p\times p_o}{p_o^2}&=&\dfrac{((x+y)^2+(x-y+y^2)^2)((x+y)^2+(x-y)^2)}{p_o^2}\\
&=&\dfrac{(2(x^2+y^2)+y^2(x-y))^2+(y^2(x+y))^2}{p_o^2}\\
&=&\left(1+\dfrac{y^2(x-y)}{2(x^2+y^2)}\right)^2+\left(\dfrac{y^2(x+y)}{2(x^2+y^2)}\right)^2\\
&=&\left(\dfrac{p_1}{p_o}\right)^2+\left(\dfrac{p_2}{p_o}\right)^2=\dfrac{1}{4}\times \left(\left(\dfrac{p_1}{q_o}\right)^2+\left(\dfrac{p_2}{q_o}\right)^2\right)
\end{array}$$
where $p_1=2(x^2+y^2)+y^2(x-y)$ and $p_2=y^2(x+y)$.

Likewise, for the denominator 
$$\begin{array}{rcl}
\dfrac{q\times q_o}{q_o^2}&=&\dfrac{((x+y^2)^2+y^2)(x^2+y^2)}{q_o^2}\\
&=&\left(1+\dfrac{xy^2}{x^2+y^2}\right)^2+\left(\dfrac{y^3}{x^2+y^2}\right)^2\\
&=&\left(\dfrac{q_1}{q_o}\right)^2+\left(\dfrac{q_2}{q_o}\right)^2
\end{array}$$
where $q_1=x^2+y^2+xy^2$ and $q_2=y^3$.\par
Then,
$$\begin{array}{rcl} 
f &= \dfrac{p}{q}=2\times\dfrac{\left(\dfrac{p\times
      p_o}{p_o^2}\right)}{\left(\dfrac{q\times q_o}{q_o^2}\right)}=
2\times \dfrac{1}{4}\times
\dfrac{\left(\dfrac{p_1}{q_o}\right)^2+\left(\dfrac{p_2}{q_o}\right)^2}{\left(
    \dfrac{q}{q_o}\right)^2}\times
\left(\left(\dfrac{q_1}{q_o}\right)^2+\left(\dfrac{q_2}{q_o}\right)^2\right)\\
&=
\dfrac{1}{2}\times \left(\left(\dfrac{p_1q_1}{q_o q}\right)^2+\left(\dfrac{p_1q_2}{q_o q}\right)^2
+\left(\dfrac{p_2q_1}{q_o q}\right)^2+\left(\dfrac{p_2q_2}{q_o
    q}\right)^2\right)
\end{array}$$

A sum of squares identity with denominator $$q_o\times q=(x^2+y^2)\times((x+y^2)^2+y^2).\quad\square$$
\end{ex}

\begin{proof}[Proof of Theorem \ref{SOS}]
We start with $f=p/q\in\SR^0_{[1]}(\R^2) $ where $p$ and $q$ are
polynomials which one may assume non-negative on $\R^2$ and such that
$q$ is definite on $\R^2$. Up to
multiplying both numerator and denominator by a common positive sum of
squares (for instance $(1+x^2+y^2)^N$ for some $N\gg 0$ by Remark \ref{unif_denom}), one may assume that $p$ and $q$ are sums of squares of polynomials. Doing this, $q$ remains locally positive definite at any point of $\R^2$.\par
We argue by induction on the cardinality of $\Z_f(q)$ which is a finite subset of $\R^2$ since $q$ is the denominator of a regulous function on $\R^2$ multiplied by a non-vanishing polynomial ; we set $\Zd_f(q)=\{a_1,\ldots,a_n\}$. 
The case $n=0$ is exactly Proposition \ref{plateapresuneclat}.
\par
The key induction is given by Lemma \ref{keyinductionsos}. Namely, we start with $f=p/q$ together with the sums of squares identities for $p$ and $q$ and we set $d=1$. We take the point $a\in \Zd_f(q)$ and after one application of Lemma \ref{keyinductionsos},
$p$ is replaced by $p\times q_a$, $q$ is replaced by $q\times q_a$ 
and $d$ is replaced by $d\times q_a$. Then we get new sums of squares identities for the new $p$ and $q$ (namely $p\times q_a$ and $q\times q_a$) and overall the new set $\Zd_f$, which no more contains $a$, has cardinality $n-1$.\par
After $n-1$ more applications of Lemma \ref{keyinductionsos}, we get the existence
of some integers $k,m$, some polynomials $r_i$, $s_j$, a sum of squares of polynomials $d$ which is locally positive definite everywhere, and an identity 
$f=\frac{r}{s}$, where $r=\sum_{i=1}^{k} r_i^2$ and $s=\sum_{j=1}^{m} s_j^2$ such that 
\begin{itemize}
\item $s$ is positive definite at any point,
\item any $\frac{r_i}{d}$ and $\frac{s_j}{d}$ is in $\SR^0_{[1]}(\R^2)$,
\item $\Z_f\left(\frac{s}{d^2}\right)= \emptyset$. 
\end{itemize}
It suffices then to write
$$f=\frac{r}{s}=\frac{\frac{r}{d^2}\times\frac{s}{d^2}}{\left(\frac{s}{d^2}\right)^2}=\sum_{i,j}\left(\frac{\frac{r_i}{d}\times \frac{s_j}{d}}{\frac{s}{d^2}}\right)^2=
\sum_{i,j}f_{i,j}^2$$
where $f_{i,j}=\frac{{r_i}\times {s_j}}{s}$.
If $a\in \Z(q)$ and $a\notin\{a_1,\ldots,a_n\}$, then $f(a)=0$ and hence any $f_{i,j}$ is continuous at $a$. Besides, at a point $a_l$, the rational function  
$f_{i,j}=\frac{\frac{r_i}{d}\times \frac{s_j}{d}}{\frac{s}{d^2}}$ is also continuous since 
$\frac{r_i}{d}$, $\frac{s_j}{d}$ and $\frac{s}{d^2}$ are continuous with $\frac{s}{d^2}(a_l)\not=0$.\par
This shows that any $f_{i,j}$ is in $\SR_{[1]}^0(\R^2)$.
\par
Moreover, the common denominator $s$ in the resulting sum of squares can be chosen as
 $$s= q_{a_1}\times q_{a_2}\times\ldots\times q_{a_n}\times q.$$
\end{proof}



\begin{thebibliography}{ABR2}

\bibitem{Bo} J.-M. Bony, {\it D\'ecomposition des fonctions positives en sommes de carr\'es}, Journ\'ees "\'Equations aux D\'eriv\'ees Partielles'', Exp. No. III, 8 pp., \'Ecole Polytech., (2004)

\bibitem{BCR} J. Bochnak, M. Coste, M.-F. Roy, Real algebraic
  geometry, Springer, (1998)

\bibitem{BKS} J. Bochnak, W. Kuchartz, M. Shiota, {\it Equivalence ideals of real global analytic functions and the $17$ Hilbert  problem}, Invent. Math. 63, 403-421 (1981)

\bibitem{De} C. N. Delzell, {\it Bad points for positive semidefinite polynomials}, abstracts Amer. Math. Soc. 18 (1997)

\bibitem{Ko} J. Koll\'ar,
{\it Continuous rational functions}, 
\texttt{arXiv:1101.3737v1 [math.AG]}.

\bibitem{KN} J. Koll\'ar, K. Nowak,
{\it Continuous rational functions on real and p-adic varieties II}
\texttt{ arXiv:1301.5048v2 [math.AG]}.

\bibitem{Kr} G. Kreisel, Review of Ershov, Zbl. 374, 02027 (1978).

\bibitem{Ku} W. Kucharz,
{\it Rational maps in real algebraic geometry},
Adv. Geom.~\textbf{9} (4), 517--539, (2009)

\bibitem{KuKu} W. Kucharz, K. Kurdyka, 
{\it Stratified-algebraic vector bundles},
\texttt{arXiv:1308.4376v1 [math.AG]}

\bibitem{KK} K. Kurdyka, 
{\it Ensembles semialg\'ebriques
       sym\'etriques par arcs}, Math. Ann. 282, 445-462 (1988)

\bibitem{FHMM} G. Fichou, J. Huisman, F. Mangolte,
  J.-P. Monnier,
{\it Fonctions r\'egulues}, CRELLE Journal, to appear.

\bibitem{Ri} J.-J. Risler, {\it Th\'eor\`eme des z\'eros en
    g\'eom\'etrie alg\'ebrique et analytique}, Bull. Soc. Math., 104, 
  no. 2, 113-127, (1976)

\bibitem{Ru} J.-M. Ruiz, {\it On Hilbert $17$-th problem and real
    Nullstellensatz for global analytic functions}, Math. Z.,  190,
  no. 3, 447-454. (1985)

\bibitem{Sc} C. Scheiderer, {\it Sums of squares on real algebraic
    surfaces}, Manuscripta Math. 119, no. 4, 395-410, (2006)
\end{thebibliography}
\end{document}